\documentclass[12pt]{amsart}
\usepackage{graphicx, amsfonts, amsmath, amssymb, cleveref, esvect, xcolor, nicematrix} 

\usepackage{tikz-cd}
\usetikzlibrary{cd}
\usetikzlibrary{patterns}

\theoremstyle{theorem}
\newtheorem{theorem}{Theorem}[section]

\newtheorem*{maintheorem}{Main Theorem}

\newtheorem{lemma}[theorem]{Lemma}
\newtheorem{proposition}[theorem]{Proposition}

\newtheorem{remark}[theorem]{Remark}

\theoremstyle{definition}

\newtheorem{example}[theorem]{Example}

\newtheorem{definition}[theorem]{Definition}
\newtheorem{notation}[theorem]{Notation}
\newtheorem{question}[theorem]{Question}

\numberwithin{equation}{section}

\title[Every Weak Perron Number is an End-Periodic Stretch Factor]{Every Weak Perron Number is an End-Periodic Stretch Factor}

\author[P. Hillen, M. Loving, and C. Wu]{Paige Hillen, Marissa Loving, and Chenxi Wu}

\date{\today}

\begin{document}

\begin{abstract}
    Given any weak Perron number $\lambda$, we construct an end-periodic homeomorphism $f:\Sigma\rightarrow \Sigma$ with Handel-Miller stretch factor equal to $\lambda$ where $\Sigma$ is a connected infinite-type surface with finitely many ends all accumulated by genus. 
\end{abstract}

\maketitle

\section{Introduction}

Given an end-periodic homeomorphism $f: \Sigma \to \Sigma$ its stretch factor $\lambda(f)$ is the spectral radius of the incidence matrix of the Markov decomposition defined by the Handel--Miller laminations associated to $f$. This stretch factor coincides with the stretch factor of a spun pseudo-Anosov representative of $f$ considered in \cite{LandryMinskyTaylor2023}; see \cite{LovingWu2025} for a brief discussion of this fact. Landry--Minsky--Taylor show that a spun pseudo-Anosov minimizes the stretch factor over all homotopic end-periodic maps and that the stretch factor of a spun pseudo-Anosov captures the growth rate of the intersection number of a simple closed curve with its image under iteration by that spun pseudo-Anosov. See also their related work in \cite{LandryMinskyTaylor2022}. More recently, Buckminster showed that a spun pseudo-Anosov representative minimizes the stretch factor in its homotopy class \cite{Buckminster}. 

On the other hand, very little is known about the number theoretic properties of end-periodic stretch factors, which has been a fruitful direction of research for pseudo-Anosov stretch factors. For example, in his announcement of the classification theorem for elements of the mapping class group, Thurston remarked that a pseudo-Anosov homeomorphism on a genus $g$ surface $\Sigma_g$ has stretch factor with algebraic degree bounded by $6g-6$, the dimension of the Teichm\"uller space of $\Sigma_g$ \cite{Thurston88}. This was eventually proved by Strenner who not only proved that Thurston's asserted bound can be realized, but also completely characterized the set of possible algebraic degrees of pseudo-Anosov elements of $\textrm{Mod}(\Sigma_g)$ \cite{Strenner2017}. A related result of Fried proves that every pseudo-Anosov stretch factor $\lambda$ is bi-Perron \cite{Fried1985}. Fried  conjectured that every bi-Perron number arises as the stretch factor of a pseudo-Anosov. This remains an open question. In contrast, we fully characterize the set of end-periodic stretch factors.

\begin{maintheorem} \label{thm:main}
    A number $\lambda$ is the stretch factor of an end-periodic homeomorphism $f: \Sigma \to \Sigma$ if and only if $\lambda$ is a weak Perron number.
\end{maintheorem}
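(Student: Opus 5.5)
The plan is to prove the two directions separately. The "only if" direction uses the definition of the stretch factor recalled above. By definition, $\lambda(f)$ is the spectral radius of the incidence matrix $M$ of the Markov decomposition coming from the Handel--Miller laminations of $f$. This $M$ is a square matrix with non-negative integer entries. By Perron--Frobenius theory, the spectral radius of any non-negative integer matrix is a weak Perron number: it is an eigenvalue of $M$, so it is a real algebraic integer that is at least $1$ (or $0$), and it dominates the absolute values of its Galois conjugates, because these are also eigenvalues of $M$. Conversely, we will use Lind's theorem in its weak Perron form. Every weak Perron number is the spectral radius of some non-negative integer matrix, and after passing to a suitable power or block structure we may take a primitive or irreducible block. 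The remaining task is purely topological: given a non-negative integer matrix $A$ with spectral radius $\lambda$, build an end-periodic $f$ whose Handel--Miller transition matrix has spectral radius $\lambda$.

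For the "if" direction, I would first reduce to a convenient normal form for $A$. By standard symbolic-dynamics manipulations (state splitting, or the graph construction in Lind's proof), we may assume $A$ is a $0$--$1$ matrix, that is, the adjacency matrix of a directed graph $G$. We may also assume it is irreducible, or at least has one irreducible component realizing $\lambda$, with all other components having smaller spectral radius. Next I would realize $G$ as a train-track-like map on a graph. Choose a finite graph $\Gamma$ and a map $\phi:\Gamma\to\Gamma$ whose transition matrix on edges is $A$. Since we only need spectral radius $\lambda$, we are free to add extra "wandering" edges that feed into or out of the core; this is exactly where end-periodicity helps. Unlike in the pseudo-Anosov setting, we do not need $\phi$ to be a homotopy equivalence, nor do we need a bi-Perron (reciprocal) constraint. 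This is why every weak Perron number can be achieved.

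The construction itself proceeds in three steps. Thicken $\Gamma$ to a compact surface with boundary, the "core" $K$. Then attach positive ends and negative ends, each a $\mathbb{Z}$-indexed chain of copies of a genus-carrying block, so that the ends are accumulated by genus. Define $f$ to act as a shift on the ends and as a thickening of $\phi$ on the core. The escaping edges of $\phi$, that is, the non-surjective or non-injective parts, are routed into the positive ends, and preimages are routed out of the negative ends. Finitely many ends suffice because we use finitely many such chains. We then identify the positive and negative Handel--Miller laminations, or equivalently a spun pseudo-Anosov representative, via the result of \cite{LandryMinskyTaylor2023} cited in the introduction. From this we check that the Markov rectangles in the core correspond to the edges of $\Gamma$, and that the incidence matrix is $A$, possibly with additional blocks of spectral radius at most $\lambda$.

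I expect the main obstacle to be this last verification: ensuring that the Handel--Miller laminations really produce the intended Markov decomposition. It must not collapse or refine the rectangles in a way that changes the spectral radius. I would first try to make $f$ a composition of explicit Dehn twists and handle shifts, so that $f$ carries an invariant train track in the core. With such a track, the transition matrix of the train track, restricted to the compact core, computes the stretch factor directly, in the spirit of \cite{LovingWu2025}. Extra blocks coming from end behavior would be nilpotent or have spectral radius $1$, so they do not affect $\lambda$ when $\lambda\ge 1$. The degenerate cases $\lambda=0$ and $\lambda=1$ would be handled separately, for example by a pure translation.
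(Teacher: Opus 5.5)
Your ``only if'' direction is essentially the paper's. It rests on Cantwell--Conlon--Fenley's theorem that the core dynamics is a finite-type Markov shift, so $\lambda(f)$ is the spectral radius of a finite non-negative integer matrix. One gap to close there: the paper defines weak Perron as ``some power is Perron,'' while your argument only gives non-strict domination of the Galois conjugates. Add the Perron--Frobenius description of the peripheral spectrum: eigenvalues of modulus $\rho$ are $\rho$ times roots of unity, so a suitable power of $\rho$ strictly dominates its other conjugates.

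The ``if'' direction has a genuine gap at its central step: you never construct the homeomorphism.

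\begin{itemize}
\item Since $A$ is irreducible, $\phi:\Gamma\to\Gamma$ is surjective on the core edges. Its failure to be injective is therefore folding inside the core, as already happens for $A=(d)$, where $\phi$ is a degree-$d$ circle map. Folding cannot be ``routed into the ends.''
\item A homeomorphism can only realize $\phi$ as a horseshoe-type embedding, stretching along $\Gamma$ by $\lambda$ and squeezing across it by $\lambda^{-1}$. This also needs $\phi$ to be compatible with a ribbon structure so that the thickened image embeds.
\item Even then, the real work is to show that the gaps $K\setminus f(K)$ and the overflow $f(K)\setminus K$, at all iterates, are filled exactly by your pre-chosen end chains. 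Only then does $f$ extend to a homeomorphism with finitely many ends. These gaps accumulate, and nothing in the proposal controls where.
\end{itemize}

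Until this is done, your later steps have nothing to act on. This includes identifying the Handel--Miller laminations via Landry--Minsky--Taylor, and the invariant train track from Dehn twists and handle shifts, for which no mechanism producing an arbitrary $A$ is given. So the obstacle is not the final Markov verification you flag; it is the existence of $f$.

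The paper supplies exactly this step, and in it the ends are outputs rather than inputs.
\begin{itemize}
\item It takes rectangles $Q_i$ whose widths and heights come from the left and right Perron--Frobenius eigenvectors of $M$. The bijection $f_0$ from vertical strips onto horizontal strips is then affine on each strip, with horizontal stretch $\lambda$ and vertical contraction $\lambda^{-1}$, hence injective on interiors despite the horizontal folding.
\item The only defects lie on strip edges, where the left/right (resp.\ top/bottom) edge maps disagree. These edge maps have finitely many periodic points, which is precisely where the identifications would accumulate.
\item The paper glues infinite strips at those points, with $f_1$ acting as a translation on them; these become the attracting and repelling ends.
\item It then quotients by $f_{1,L}^N(x)\sim f_{1,R}^N(x)$ and $f_{1,T}^{-N}(y)\sim f_{1,B}^{-N}(y)$, blows up the finitely many infinite classes, and doubles.
\item Finally it inserts genus along an invariant boundary line, and uses a modified doubling when $M$ is irreducible but not primitive to get connectedness.
\end{itemize}
The stretch factor then comes directly from the invariant transverse measured foliations, whose Markov incidence matrix is block diagonal with two copies of $M$.
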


One direction of this characterization follows from work of Cantwell-Conlon--Fenley in which they show that the stretch factor of an end-periodic homeomorphism is necessarily weak Perron. In particular, they show that every ``pseudo-Anosov" end-periodic homeomorphism defines a \emph{core dynamical system} that is topologically conjugate to a two-ended Markov shift of finite type \cite[Theorem~9.2]{CC-book} and hence, has stretch factor a Perron number. Stretch factors that are weak Perron, but not Perron, are obtained when this extra irreducibility requirement is relaxed.

Thus, our contribution lies in showing that every weak Perron number can be realized as the stretch factor of some end-periodic homeomorphism. Our proof is entirely constructive. Given an irreducible, non-negative, square integer matrix $M$ with spectral radius equal to $\lambda$ we use $M$ to build a surface $\Sigma$ and an end-periodic homeomorphism $f: \Sigma \rightarrow \Sigma$ with $\lambda(f) = \lambda$. This is the same type of construction as the one used by Baik--Rafiqi--Wu to build examples of pseudo-Anosov homeomorphisms with a given stretch factor \cite{BaikRafiqiWu2016}. However, they imposed relatively restrictive conditions on their initial matrix in order to ensure that the surface they obtained had finite type.

Finally, we note that our main theorem parallels Thurston's characterization of entropy in dimension one \cite{Thurston2014, DDHKOP}. In particular, he proved that every weak Perron number arises as the stretch factor of some outer automorphism of a free group. Indeed, the constructions both here and in Baik--Rafiqi--Wu \cite{BaikRafiqiWu2016} can be traced back to ideas of Thurston. 

\subsection*{Structure of the Paper} We begin by reviewing some of the basic structure associated to and end-periodic homeomorphism, defining the Handel-Miller stretch factor, and recalling some Perron-Frobenius theory in \Cref{background}. 

In \Cref{integers}, we show that every integer can be realized as the stretch factor of an end-periodic homeomorphism. While this result follows from the Main Theorem, this special case serves as a nice warm up for the general construction and hints at the overall structure of the remainder of the paper. 

In \Cref{linear map}, we describe how to use an irreducible, square, nonnegative integer matrix $M$ to construct a set of rectangles $\Sigma_0$, called the \textit{rectangles corresponding to $M$},  along with two partitions of $\Sigma_0$. We then construct a piecewise linear homeomorphism $f_0$ from the interior of one partition to the interior of the other partition. This map $f_0$ stretches in the horizontal direction by the spectral radius of $M$, $\lambda$, and contracts by $\lambda^{-1}$ in the vertical direction. 

In \Cref{edge maps}, we define four maps on the edges of the partitions of $\Sigma_0$ and show these edge maps have finitely many periodic points. 

In \Cref{infinite strips}, we build $\Sigma_1$, called the \textit{extended rectangles corresponding to $M$}, by gluing infinite strips to $\Sigma_0$ around neighborhoods of the periodic points of the edge maps. We then define the \textit{extended piecemap corresponding to $M$}, denoted $f_1$, from the interior of one partition of $\Sigma_1$ to the interior of a second partition of $\Sigma_1$. We define four new edge maps of $f_1$, and use these new edge maps to generate an equivalence relation $\sim$ on the boundary of $\Sigma_1$. The \textit{infinite 2-complex corresponding to $M$} is $\Sigma_2 := \Sigma_1 / \sim$. The section concludes with a proof that a slight modification of $\Sigma_2$ results in a surface $\Sigma$. We then show in \Cref{infinite-type} that a straightforward modification of this construction guarantees that $\Sigma$ is of infinite type. In \Cref{sec:end-periodic map}, we show $f_1$ extends to an end-periodic homeomorphism $f:\Sigma \rightarrow \Sigma$ with stretch factor equal to $\lambda$.

In \Cref{weak perron} we show $\Sigma$ is connected when $M$ is primitive and that another compatible modification of the construction guarantees $\Sigma$ is connected in the case that $M$ is irreducible but not primitive. Note that when $\lambda$ is a weak Perron number and not a Perron number, then it is the spectral radius of an irreducible but not primitive matrix $M$.

Finally, in \Cref{discussion}, we discuss some further directions of research in this area and highlight several open questions.

\subsection*{Acknowledgments} Part of this research was performed while Loving was in residence at the Mathematical Sciences Research Institute (MSRI), now becoming the Simons Laufer Mathematical Sciences Institute (SLMath), which is supported by the National Science Foundation (Grant No. DMS-2424139). We also gratefully acknowledge NSF support via DMS-2231286 (Loving) and the NSF RTG: Geometry, Group Actions, and Dynamics at Wisconsin via DMS-2230900 (Hillen). Wu was partially supported by a Simons Collaboration Grant (No. 850685). We are grateful for valuable discussions with Hyungryul Baik, Ahmad Rafiqi and John Hubbard. Finally, we thank Chi Cheuk Tsang for his insightful comments on an earlier draft of this paper. 

\section{Background}\label{background}

\subsection{End-periodic homeomorphisms}

Let $\Sigma$ denote an orientable surface of infinite type with finitely many ends all of which are non-planar. Note that we drop the typical assumption that $\Sigma$ is connected as it will be convenient at some points to consider a surface with multiple connected components. An \emph{end-periodic homeomorphism} $f: \Sigma \rightarrow \Sigma$ is a homeomorphism satisfying the following. There exists $m>0$ such that for each end $E$ of $\Sigma$, there is a neighborhood $U_E$ of $E$ so that either 
    \begin{enumerate}
        \item[(i)] $f^m(U_E) \subsetneq U_E$ and the sets ${f^{nm}(U_E)}_{n > 0}$ form a neighborhood basis of $E$; or
        \item[(ii)] $f^{-m}(U_E) \subsetneq U_E$ and the sets ${f^{-nm}(U_E)}_{n > 0}$ form a neighborhood basis of $E$.
    \end{enumerate}
We call such a neighborhood $U_E$ a \emph{nesting neighborhood}.

Fix an end-periodic homeomorphism $f: \Sigma \rightarrow \Sigma$. Let $U_+$ be a union of nesting neighborhoods of the attracting ends and let $U_-$ be a union of nesting neighborhoods of the repelling ends. We call the set $U_+$ (resp. $U_-$) a \emph{positive} (resp. \emph{negative}) \emph{ladder} of $f$. In addition, we say the ladder $U_{\pm}$ is \emph{tight} if $f^{\pm 1}(U_{\pm}) \subset U_{\pm}$ is a proper inclusion. A disjoint pair of positive and negative tight ladders $U_+, U_-$ define a compact subsurface $Y = \Sigma - (U_+ \cup U_-)$, which is called a \emph{core} for $f$.

Define \[\mathcal U_+ = \bigcup_{n \geq 0} f^{-n}(U_+)\text{ and } \mathcal U_- = \bigcup_{n \geq 0} f^n(U_-).\] We call $\mathcal U_+$ the \emph{positive escaping set} and $\mathcal U_-$ the \emph{negative escaping set}. 

An essential multiloop in $\mathcal U_+$ (resp. $\mathcal U_-$) is a \emph{positive} (resp. \emph{negative}) \emph{juncture} of $f$ if it is the boundary of a tight positive (resp. negative) ladder. Given a positive and negative juncture $j_+$ and $j_-$ of $f$ we define \[J^+ = \bigcup_{k \in \mathbb Z} f^k(j_+) \text{ and } J^- = \bigcup_{k \in \mathbb Z} f^k(j_-).\]

Fix a hyperbolic metric $X$ on $\Sigma$ and let $\mathcal J^{\pm}$ be the union of tightened geodesics for each curve in $J^{\pm}$. Note that $\overline{\mathcal J^{\pm}}$ is a geodesic lamination on $\Sigma$ since it is the union of disjoint simple geodesics. The \emph{Handel--Miller laminations} associated to $f$ are the geodesic laminations defined by \[\Lambda^+ = \overline{\mathcal J^-} - \mathcal J^- \text{ and } \Lambda^- = \overline{\mathcal J^+} - \mathcal J^+.\] Note that, by definition, $\mathcal J^-$ is disjoint from $\Lambda^+$ and $\mathcal J^+$ is disjoint from $\Lambda^-$.

The intersection of the Handel--Miller laminations $\Lambda^{+} \cap \Lambda^-$ defines a Markov decomposition of the complement of the escaping points, and we denote by $\lambda(f)$ the corresponding spectral radius of the incidence matrix of this Markov decomposition, which is the exponential of the topological entropy on the action of $f$ on $\Lambda^+\cap\Lambda^-$. We call $\lambda(f)$ the \emph{Handel--Miller stretch factor} of $f$. See \cite{CC-book} for a more detailed discussion. We will often refer to $\lambda(f)$ as simply the \emph{stretch factor} of $f$ or the \emph{end-periodic stretch factor} of $f$. 


\subsection{Perron--Frobenius Theory}

We will say that an $n \times n$ matrix $M = (m_{ij})$ with non-negative integer entries is \emph{irreducible} if for each $(i, j)$ there is an integer $k > 0$ such that $m_{ij}$ is positive. $M$ is \emph{primitive} if there exists a $k > 0$ so that every entry of $M^k$ is positive. The \emph{graph associated to $M$} is the directed graph with vertex set $\{v_i\}_{i=1}^n$ and $m_{ij}$ many directed edges from $v_j$ to $v_i$. 
The Perron--Frobenius theorem guarantees that such a matrix has a unique real eigenvalue that is largest in absolute value. This is called the Perron--Frobenius eigenvalue. The following theorem of Lind gives the converse \cite{Lind1984}. 

\begin{theorem}[Lind]\label{thm:Lind}
    For any real algebraic integer $\lambda > 0$ that is strictly larger in absolute value than its Galois conjugates, there exists a non-negative integer matrix with some power that is strictly positive and has $\lambda$ as an eigenvalue.
\end{theorem}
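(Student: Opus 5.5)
The plan follows Lind's original strategy: build an explicit non-negative integer matrix from the companion-type data of $\lambda$ together with a cone that $\lambda$'s dominance makes invariant.

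\textbf{Setup.} Let $d$ be the degree of $\lambda$, and let $p$ be its minimal polynomial. Consider the $\mathbb{Q}$-vector space $V=\mathbb{Q}(\lambda)\cong\mathbb{Q}^d$. Multiplication by $\lambda$ acts on $V$, preserves the lattice $\mathbb{Z}[\lambda]$, and is given by the companion matrix $C$ of $p$. Embed $V\otimes\mathbb{R}$ via the real and complex embeddings $\sigma_1=\mathrm{id},\sigma_2,\dots$ of $\mathbb{Q}(\lambda)$. The coordinate $\sigma_1$ is the dominant eigendirection of $C$, since $\lambda>|\sigma_i(\lambda)|$ for $i\ge 2$.

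\textbf{Invariant cone.} For small $\epsilon>0$, take the open cone
\[
K=\{x : \sigma_1(x)>0,\ |\sigma_i(x)|<\epsilon\,\sigma_1(x) \text{ for } i\ge 2\}.
\]
Multiplication by $\lambda$ sends $\overline{K}\setminus\{0\}$ into $K$, and points of $K$ are pushed strictly toward the ray through $1$. The key step is to find finitely many lattice points $z_1,\dots,z_n\in\mathbb{Z}[\lambda]\cap K$ with two properties: they generate a polyhedral cone $P$ satisfying $\lambda P\subset P$, and each $\lambda z_j$ is a \emph{non-negative integer} combination of the $z_i$.

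For the cone condition, choose lattice points in $K$ whose convex hull contains $\lambda\overline{K'}\cap\{\sigma_1=1\}$ for a slightly smaller cone $K'$. This works because the slice $\lambda K\cap\{\sigma_1=1\}$ is compactly contained in the slice of $K$. Lattice points of $\mathbb{Z}[\lambda]$ are dense in directions, since the image of $\mathbb{Z}[\lambda]$ is a full lattice in $\mathbb{R}^d$.

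\textbf{Integrality.} To get integer rather than merely rational coefficients, enlarge the generating set. Add all lattice points in $K$ with $\sigma_1\le R$, for $R$ large. Each $\lambda z_j$ lies deep in the interior of $K$. For a point deep in the interior, write $\lambda z_j=z_{i_0}+w$ with $w$ still in the cone, and proceed greedily; the large supply of generators lets one subtract generators until a remainder lands in the generating set. This is the main obstacle. What I would try first is the standard trick: a lattice point of $K$ with large $\sigma_1$ can be written as a sum of a generator and a lattice point in $K$ of smaller $\sigma_1$. This uses that $K$ is open, so a bounded fundamental domain fits inside it after translation.

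\textbf{Conclusion.} With $\lambda z_j=\sum_i a_{ij}z_i$ and $a_{ij}\in\mathbb{Z}_{\ge0}$, set $A=(a_{ij})$. Applying $\sigma_1$ gives $u^TA=\lambda u^T$ with $u=(\sigma_1(z_i))>0$. So $\lambda$ is an eigenvalue of $A$, and a positive eigenvector forces $\lambda$ to be the spectral radius. For primitivity, iterate: since $\lambda^k z_j$ converges in direction to the ray through $1$, which lies in the interior of every generator's neighborhood cone, the decomposition of $\lambda^k z_j$ can be arranged to use every $z_i$ with positive coefficient for large $k$. Alternatively, add a small positive perturbation absorbed by the interior slack. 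Either way, $A^k>0$.
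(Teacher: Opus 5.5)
The paper does not prove this statement; it is quoted from Lind. The comparison is therefore with Lind's argument, and your outline follows its shape: multiplication by $\lambda$ on $\mathbb{Z}[\lambda]$, a cone about the dominant direction mapped into itself, lattice points in that cone, and a matrix read off from decompositions $\lambda z_j=\sum_i a_{ij}z_i$. The cone step and the eigenvalue conclusion ($u^TA=\lambda u^T$ with $u>0$) are fine. Two steps, however, do not work as written.

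\emph{Integrality.} The trick you propose is false for lattice points near $\partial K$. Suppose $x=g+y$ with $y\in K$ and the ball $B(g,r)$ lies in $K$. Then $B(x,r)=B(g,r)+y\subseteq K$. So with finitely many generators, no lattice point of $K$ closer to $\partial K$ than $\min_g\mathrm{dist}(g,\partial K)$ can split off a generator. In general, lattice points of your round cone with arbitrarily large $\sigma_1$ do come that close to $\partial K$, so $\mathbb{Z}[\lambda]\cap K$ is not a finitely generated semigroup. Your greedy remainders are only known to lie in $K$, so nothing keeps them away from this boundary layer.

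The clean fix is to do the bookkeeping in the rational polyhedral cone $P$ instead of $K$. Arrange $\lambda\overline{K}\setminus\{0\}\subseteq P\subseteq K$, so that multiplication by $\lambda$ maps $P\cap\mathbb{Z}[\lambda]$ into itself. Every $x\in P\cap\mathbb{Z}[\lambda]$ can be written $\sum_i t_iz_i$ with $t_i\ge 0$. Hence $x=\sum_i\lfloor t_i\rfloor z_i+r$, with $r$ in the finite set $\mathbb{Z}[\lambda]\cap\{\sum_i s_iz_i:0\le s_i<1\}$ (Gordan's lemma). Adjoining these finitely many $r$ to the generators makes each $\lambda$-image of a generator a non-negative integer combination of generators. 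No greedy step is needed.

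\emph{Primitivity.} Write $Z$ for the row of generators, so $\lambda Z=ZA$. Then $A^k$ is determined by your one-step decompositions, and there is nothing left to ``arrange'' for $\lambda^kz_j$. With your large generating set, any generator occurring in no chosen decomposition gives a zero row of $A$, hence of every $A^k$. Your ``perturbation'' alternative, requiring every $z_i$ to occur in each decomposition, is impossible for the generators with smallest $\sigma_1$, since for them $\sigma_1(\lambda z_j)<\sum_i\sigma_1(z_i)$.

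A working fix is as follows.
\begin{enumerate}
\item If $d\ge 2$, then $1\le|\prod_i\sigma_i(\lambda)|<\lambda^d$, so $\lambda>1$.
\item Hence $(\lambda-1)z_j=\lambda z_j-z_j$ is a lattice point of $P$. Decompose $\lambda z_j$ as $z_j$ plus a decomposition of $(\lambda-1)z_j$; this gives $a_{jj}\ge 1$ for every $j$.
\item Let $S$ be a terminal strongly connected component of the digraph of $A$. It is closed under ``occurs in the decomposition of'', so the principal submatrix satisfies $u_S^TA_S=\lambda u_S^T$ with $u_S>0$.
\item $A_S$ is irreducible with positive diagonal, hence primitive, with Perron eigenvalue $\lambda$.
\end{enumerate}
For $d=1$, the $1\times 1$ matrix $(\lambda)$ works.
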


We will call such a $\lambda$, as in \Cref{thm:Lind}, a \emph{Perron} number. If a number has some power that is Perron we call it \emph{weak Perron}.

\section{Warm Up: Integers}\label{integers}
As a warm-up for the general case, we first prove that every integer $d \geq 2$ is the stretch factor of some end-periodic map on an infinite type surface. Most of the notation we introduce here will be used again in the general case.

\begin{theorem}
    For any integer $d \geq 2$, there is an infinite-type surface $\Sigma$ and an end-periodic homeomorphism $f: \Sigma \rightarrow \Sigma$ with stretch factor equal to $d$. 
\end{theorem}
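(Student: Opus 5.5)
We will build the surface and map directly from the $1\times 1$ matrix $M=(d)$, as a model for the general construction. Start with the unit square $R=[0,1]\times[0,1]$. Cut it into $d$ vertical strips $V_i=[\tfrac{i-1}{d},\tfrac{i}{d}]\times[0,1]$ and into $d$ horizontal strips $H_i=[0,1]\times[\tfrac{i-1}{d},\tfrac{i}{d}]$. Let $f_0$ be the affine map sending the interior of each $V_i$ onto the interior of $H_i$ by $(x,y)\mapsto (dx-(i-1),\, (y+i-1)/d)$. This map stretches horizontally by $d$ and contracts vertically by $d^{-1}$, exactly like the baker's map. The maximal invariant set of the points whose whole orbit stays in the square is a Cantor set times a Cantor set. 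Its dynamics is conjugate to the full two-sided shift on $d$ symbols, whose incidence matrix is $(d)$ with spectral radius $d$. The remaining work is to turn $R$ into a surface on which $f_0$ extends to an end-periodic homeomorphism whose Handel--Miller laminations induce precisely this Markov partition.

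Next, look at the boundary behavior. The vertical sides $x=0$ and $x=1$ of $R$ are sent by $f_0$ into vertical sides, and the horizontal sides into horizontal sides. Each induced edge map is affine with slope $d$ or $d^{-1}$, so it has finitely many periodic points. For $d\ge 2$ these are just the corners $(0,0)$ and $(1,1)$, possibly after passing to a power. The points of $\partial R$ that are not captured by the invariant Cantor set must eventually leave the square. To give them somewhere to go, glue infinite strips $[0,\infty)\times[0,1]$ (and their vertical analogues) along the boundary near these periodic points. Then extend $f_0$ on the strips as a translation composed with the same affine scaling. The extension should push points out along horizontal strips toward an attracting end and in from vertical strips out of a repelling end.

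Then identify the remaining boundary segments pairwise, by isometries compatible with the edge maps, so that the glued object is a surface without boundary and $f$ is continuous across the seams. The key check is that the gap intervals of each side are identified with the images of other gap intervals under $f_0$. To force infinite genus at each end, add a handle in a fundamental domain of each strip so that the translation carries it along. The surface is then infinite type with finitely many ends, all accumulated by genus, and $f$ is end-periodic with nesting neighborhoods given by the strip tails.

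Finally, verify the stretch factor. The junctures are the images of the curves cutting off the strip tails. Their accumulation sets, $\Lambda^\pm$, are the horizontal and vertical foliations restricted to the Cantor sets in $R$. So $\Lambda^+\cap\Lambda^-$ is exactly the Cantor-times-Cantor invariant set, with Markov decomposition given by the single rectangle $R$ and incidence matrix $(d)$, giving $\lambda(f)=d$. The main obstacle is the gluing step: choosing the boundary identifications and strip attachments so that the result is genuinely a surface with only cone-free singularities, or at least so that it can be modified into one. The first thing I would try is to treat the corner points as singular prongs and check the local picture around them explicitly for small $d$, say $d=2$.
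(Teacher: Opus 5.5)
Your outline follows the same route as the paper: a baker-type piece map, periodic corners of the edge maps, infinite strips glued at those corners, equivariant boundary identifications, and a Markov partition read off from the square. But the step you defer as ``the main obstacle'' is the substance of the proof, and neither the mechanism you describe for it nor your proposed fix works.

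First, the identifications are not pairings of ``gap intervals.'' The $d$ strips tile the square, so there are no gaps: the set of points whose full orbit stays in the square is the square minus countably many segments, not a Cantor set times a Cantor set. What continuity of $f$ actually forces is this. A point $x$ on an interior cut line $\{k/d\}\times[0,1]$ has two one-sided images $f_L(x)$ and $f_R(x)$, lying on the left and right sides of the square. These must be glued, and then $f_L^N(x)\sim f_R^N(x)$ for every $N\ge 1$. Likewise $f_T^{-N}(y)\sim f_B^{-N}(y)$ for $y$ on the horizontal cut lines.

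On the bare square these identifications accumulate at the periodic corners. The result is a Chamanara-type surface with a single end, which admits no end-periodic map. The strips exist precisely to prevent this, and for that $f_1$ must be a pure translation along each strip. Near the corner, $f_1$ is interpolated with $f_0$ by a switch homeomorphism. A ``translation composed with the $(d,d^{-1})$ scaling'' does not work: the vertical contraction shrinks a strip of fixed width, so it cannot carry the strip onto itself. With pure translations, every orbit of the four edge maps is infinite and discrete. The glued segments therefore march off to infinity in the strips instead of accumulating.

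Second, even then the quotient $\Sigma_1/\sim$ is not a surface without boundary, and its bad points are not cone points. The two strip sides meeting at each periodic corner lie in the domain of no edge map, so they are never identified and remain boundary lines. The classes of the two non-periodic corners (in your orientation, $(0,1)$ and $(1,0)$) are infinite: infinitely many corner sectors are glued in a chain that never closes up, so the link is $\mathbb{R}$, not a circle. No prong-type treatment applies, since a $k$-prong singularity has a circle link built from finitely many sectors.

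The missing idea is to replace each such point by its link, turning it into additional boundary, and then double the resulting surface along its boundary. This yields the ladder surface, and $f_1$ extends to $f$. The tails of the horizontal (resp.\ vertical) strips, together with their doubles, form nesting neighborhoods of the attracting (resp.\ repelling) end.

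Two smaller consequences follow. No handles need to be inserted here, since the genus comes for free. The Markov decomposition consists of the square and its double, so the incidence matrix is $\mathrm{diag}(d,d)$ rather than $(d)$, still with spectral radius $d$.
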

\begin{proof} First, let $$\Sigma_0:=[0,1] \times [0,1] \subseteq \mathbb{R}^2$$ denote the unit square in $\mathbb{R}^2$. Partition $\Sigma_0$ into $d$ vertical strips of equal width and denote the interior of the strips $V^{(1)}_{1,k}$ for $k \in \{1, \dots, d\}$. Now, partition $\Sigma_0$ into $d$ horizontal strips of equal height and denote the interior of the strips $H^{(1)}_{1,k}$ for $k \in \{1, \dots, d\}$. Although the superscript $(1)$ on the $V$ and $H$ may appear extraneous, it is necessary in the general case so we introduce it now for consistency. Set \[V := \bigcup_{k=1}^d V^{(1)}_{1,k} \text{ and } H := \bigcup_{k=1}^d H^{(1)}_{1,k}.\]
Let $f_0: V \rightarrow H$ be the piecewise linear homeomorphism which maps each $V^{(1)}_{1,k}$ onto $H^{(1)}_{1,k}$ and maintains top/bottom and left/right orientation. Observe that $f_0$ stretches each $V^{(1)}_{1,k}$ horizontally  by a factor of $d$ and shrinks $V^{(1)}_{1,k}$ vertically by a factor of $d^{-1}$.  

\begin{figure}[htbp] 
\begin{center}

\tikzset{every picture/.style={line width=0.75pt}} 

\begin{tikzpicture}[x=0.75pt,y=0.75pt,yscale=-1,xscale=1]

\draw  [fill={rgb, 255:red, 207; green, 132; blue, 221 }  ,fill opacity=0.44 ][dash pattern={on 0.84pt off 2.51pt}] (20.65,20) -- (160.58,20) -- (160.58,168.37) -- (20.65,168.37) -- cycle ;
\draw  [draw opacity=0][fill={rgb, 255:red, 197; green, 69; blue, 223 }  ,fill opacity=0.87 ][dash pattern={on 0.84pt off 2.51pt}] (20.65,20) -- (48.64,20) -- (48.64,168.37) -- (20.65,168.37) -- cycle ;
\draw    (184.37,94.19) .. controls (231.75,94.77) and (195.19,94.78) .. (238.99,94.2) ;
\draw [shift={(240.34,94.19)}, rotate = 179.26] [color={rgb, 255:red, 0; green, 0; blue, 0 }  ][line width=0.75]    (10.93,-3.29) .. controls (6.95,-1.4) and (3.31,-0.3) .. (0,0) .. controls (3.31,0.3) and (6.95,1.4) .. (10.93,3.29)   ;
\draw  [draw opacity=0][fill={rgb, 255:red, 197; green, 69; blue, 223 }  ,fill opacity=0.51 ][dash pattern={on 0.84pt off 2.51pt}] (20.65,20) -- (132.6,20) -- (132.6,168.37) -- (20.65,168.37) -- cycle ;
\draw  [dash pattern={on 0.84pt off 2.51pt}]  (48.64,20) -- (48.64,168.37) ;
\draw  [dash pattern={on 0.84pt off 2.51pt}]  (132.6,20) -- (132.6,168.37) ;
\draw  [draw opacity=0][fill={rgb, 255:red, 197; green, 69; blue, 223 }  ,fill opacity=0.5 ][dash pattern={on 0.84pt off 2.51pt}] (20.65,20) -- (76.62,20) -- (76.62,168.37) -- (20.65,168.37) -- cycle ;
\draw  [dash pattern={on 0.84pt off 2.51pt}]  (76.62,20) -- (76.62,168.37) ;
\draw  [fill={rgb, 255:red, 207; green, 132; blue, 221 }  ,fill opacity=0.44 ][dash pattern={on 0.84pt off 2.51pt}] (400,21.63) -- (399.74,170.01) -- (259.8,169.72) -- (260.06,21.33) -- cycle ;
\draw  [draw opacity=0][fill={rgb, 255:red, 197; green, 69; blue, 223 }  ,fill opacity=0.87 ][dash pattern={on 0.84pt off 2.51pt}] (400,21.63) -- (399.95,51.31) -- (260.02,51.02) -- (260.07,21.33) -- cycle ;
\draw  [draw opacity=0][fill={rgb, 255:red, 197; green, 69; blue, 223 }  ,fill opacity=0.51 ][dash pattern={on 0.84pt off 2.51pt}] (400,21.63) -- (399.79,140.33) -- (259.85,140.04) -- (260.06,21.33) -- cycle ;
\draw  [dash pattern={on 0.84pt off 2.51pt}]  (399.95,51.3) -- (260.02,51.02) ;
\draw  [dash pattern={on 0.84pt off 2.51pt}]  (399.79,140.33) -- (259.85,140.04) ;
\draw  [draw opacity=0][fill={rgb, 255:red, 197; green, 69; blue, 223 }  ,fill opacity=0.5 ][dash pattern={on 0.84pt off 2.51pt}] (400,21.63) -- (399.9,80.98) -- (259.96,80.69) -- (260.07,21.33) -- cycle ;
\draw  [dash pattern={on 0.84pt off 2.51pt}]  (399.89,80.98) -- (259.96,80.69) ;

\draw (202.16,57.4) node [anchor=north west][inner sep=0.75pt]    {$f_{0}$};
\draw (23.64,86.1) node [anchor=north west][inner sep=0.75pt]  [font=\footnotesize]  {$V_{1,1}^{( 1)}$};
\draw (132.79,86.1) node [anchor=north west][inner sep=0.75pt]  [font=\footnotesize]  {$V_{1,d}^{( 1)}$};
\draw (91.61,80.65) node [anchor=north west][inner sep=0.75pt]    {$\dotsc $};
\draw (319.1,26.75) node [anchor=north west][inner sep=0.75pt]  [font=\footnotesize]  {$H_{1,1}^{( 1)}$};
\draw (324.69,145.45) node [anchor=north west][inner sep=0.75pt]  [font=\footnotesize]  {$H_{1,d}^{( 1)}$};
\draw (51.63,86.1) node [anchor=north west][inner sep=0.75pt]  [font=\footnotesize]  {$V_{1,2}^{( 1)}$};
\draw (340.36,94.53) node [anchor=north west][inner sep=0.75pt]  [rotate=-90.11]  {$\dotsc $};
\draw (319.1,56.43) node [anchor=north west][inner sep=0.75pt]  [font=\footnotesize]  {$H_{1,2}^{( 1)}$};

\end{tikzpicture}
\end{center}
\caption{The piecewise linear homeomorphism $f_0:V \rightarrow H$. }
    \label{fig:integer piece map}
\end{figure}
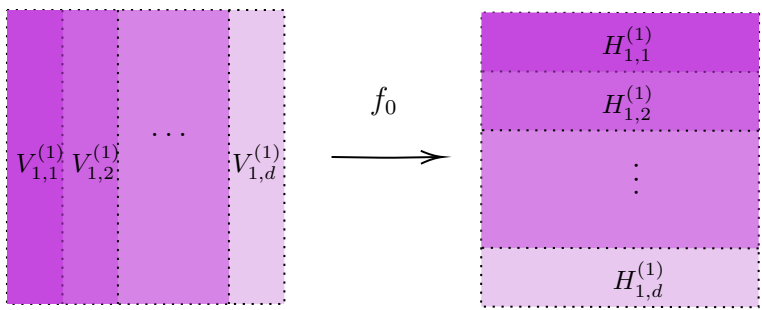

More concretely, for $(x,y) \in V^{(1)}_{1,k} := (\frac{k-1}{d},\frac{k}{d}) \times(0,1)$, we have 
$$f_0(x,y) := \Big{(}dx-k+1,1 - \frac{(k-1+y)}{d}\Big{)}.$$

\begin{remark} \normalfont At this point, we could begin to extend $f_0$ to a continuous map on the square by identifying intervals on the boundary in pairs as needed to ensure continuity. An expert in infinite-type translation surfaces might recognize this as a construction that would result in the Chamanara surface, a translation surface structure with underlying topological surface a Loch Ness monster \cite{Chamanara}. Instead of building this translation surface (whose underlying topological surface cannot admit an end-periodic homeomorphism) we instead blow up the accumulation points in the top left and bottom right corners of the square to infinite strips. This gives us a way of splitting the single end of the Loch Ness monster surface into a pair of ends, which will eventually become the attracting and repelling end of the end-periodic homeomorphism we construct as an extension of $f_0$.\end{remark}

Now, consider four infinite strips in the plane: \begin{align*} 
E_{L}(0,1)& := (-\infty, 0] \times [1-\frac{1}{d^2},1] \subseteq \mathbb{R}^2 \\
E_T(0,1) & := [0, \frac{1}{d^2}] \times [1, \infty) \subseteq \mathbb{R}^2 \\
E_R(1,0) & := [1, \infty) \times [0, \frac{1}{d^2}] \subseteq \mathbb{R}^2 \\
E_B(1,0) & := [1 - \frac{1}{d^2}, 1] \times [0, -\infty) \subseteq \mathbb{R}^2 
\end{align*}

Let $\Sigma_1$ equal $\Sigma_0$ together with the four infinite strips. Set \[(V_{1,1}^{(1)})' = \textrm{int}\left(\overline{V_{1,1}^{(1)}} \cup E_L(0,1) \cup E_T(0,1)\right),\] \[(V_{1,d}^{(1)})' = \textrm{int}\left(\overline{V_{1,d}^{(1)}} \cup E_R(1,0) \cup E_B(1,0)\right),\] and, for $2 \leq j \leq d-1$, set $(V_{1,j}^{(1)})'=V_{1,j}^{(1)}$. Finally, set \[V_1 = \bigcup_{j}(V_{1,j}^{(1)})'.\] Define each $(H_{1,j}^{(1)})'$ and $H_1$ similarly.  

Our goal is to define a homeomorphism $f_1:V_1 \rightarrow H_1$ which is equal to $f_0$ in $\Sigma_0 \subseteq \Sigma_1$, away from the points $(0,1)$ and $(1,0)$, and equal to a translation sufficiently deep into each of the infinite strips. To this end, we define a switch region around $(0,1)$ and $(1,0)$ where the behavior of $f_1$ switches from $f_0$ to translation. 

Let $W_1$ be a line segment joining $(0, 1-\frac{1}{d})$ to $(\frac{1}{d^2},1)$ which is contained in $[0,\frac{1}{d}] \times [1-\frac{1}{d},1]$. Define the \textit{switch region of $(0,1)$}, denoted $P_1$, be the region in the plane with boundary equal to
$$W_1 \cup (\{0\} \times [1-\frac{1}{d^2}, 2]) \cup ([0,\frac{1}{d^2}] \times \{2\}) \cup (\{\frac{1}{d^2}\} \times [1,2]).$$ 

Let $W_1' = \overline{f_0(W)}$. Let $P_1'$ be the region in the plane with boundary equal to
$$W_1' \cup (\{-1\} \times [1, 1-\frac{1}{d^2}]) \cup ([-1,\frac{1}{d}] \times \{1\}) \cup (\{-1\} \times [1- \frac{1}{d^2},1]).$$


Let $W_2$ be a line segment joining $(1-\frac{1}{d^2},0)$ to $(1,\frac{1}{d})$ which is contained in $[1-\frac{1}{d}, 1] \times [0,\frac{1}{d}]$. Define the \textit{switch region of $(1,0)$}, denoted $P_2$, be the region in the plane with boundary equal to
$$W_2 \cup (\{1\} \times [-1,\frac{1}{d}]) \cup ([1-\frac{1}{d^2},1] \times \{-1\}) \cup (\{1-\frac{1}{d^2}\} \times [0,-1]).$$ 

Let $W_2' = \overline{(f_0(W_2))}$. Let $P_2'$ be the region in the plane with boundary equal to
$$W_2' \cup (\{2\} \times [0, \frac{1}{d^2}]) \cup ([1-\frac{1}{d^2},2] \times \{0\}) \cup ( [1,2] \times \{1-\frac{1}{d^2}).$$


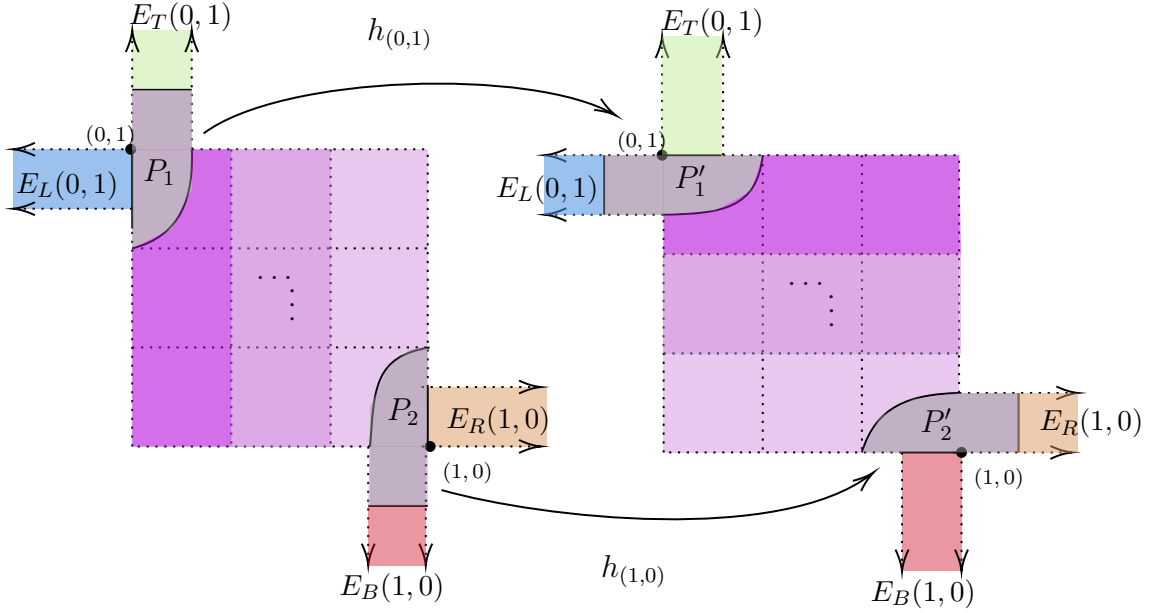
\begin{figure}[htbp] 
\begin{center}

\tikzset{every picture/.style={line width=0.75pt}} 

\begin{tikzpicture}[x=0.75pt,y=0.75pt,yscale=-1,xscale=1]

\draw  [draw opacity=0][fill={rgb, 255:red, 207; green, 132; blue, 221 }  ,fill opacity=0.44 ][line width=0.75]  (68,83) -- (217.02,83) -- (217.02,233) -- (68,233) -- cycle ;
\draw  [draw opacity=0][fill={rgb, 255:red, 184; green, 233; blue, 134 }  ,fill opacity=0.42 ][line width=0.75]  (68,23) -- (98,23) -- (98,53) -- (68,53) -- cycle ;
\draw  [draw opacity=0][fill={rgb, 255:red, 74; green, 144; blue, 226 }  ,fill opacity=0.55 ][line width=0.75]  (8,83) -- (68,83) -- (68,113) -- (8,113) -- cycle ;
\draw  [draw opacity=0][fill={rgb, 255:red, 207; green, 132; blue, 221 }  ,fill opacity=0.44 ][line width=0.75]  (188,233) -- (217.02,233) -- (217.02,263) -- (188,263) -- cycle ;
\draw  [draw opacity=0][fill={rgb, 255:red, 220; green, 149; blue, 91 }  ,fill opacity=0.49 ][line width=0.75]  (217.02,203) -- (277.02,203) -- (277.02,233) -- (217.02,233) -- cycle ;
\draw  [dash pattern={on 0.84pt off 2.51pt}]  (68,83) -- (68,25) ;
\draw [shift={(68,23)}, rotate = 90] [color={rgb, 255:red, 0; green, 0; blue, 0 }  ][line width=0.75]    (10.93,-3.29) .. controls (6.95,-1.4) and (3.31,-0.3) .. (0,0) .. controls (3.31,0.3) and (6.95,1.4) .. (10.93,3.29)   ;
\draw  [dash pattern={on 0.84pt off 2.51pt}]  (98,83) -- (98,25) ;
\draw [shift={(98,23)}, rotate = 90] [color={rgb, 255:red, 0; green, 0; blue, 0 }  ][line width=0.75]    (10.93,-3.29) .. controls (6.95,-1.4) and (3.31,-0.3) .. (0,0) .. controls (3.31,0.3) and (6.95,1.4) .. (10.93,3.29)   ;
\draw  [dash pattern={on 0.84pt off 2.51pt}]  (187.02,233) -- (187.02,291) ;
\draw [shift={(187.02,293)}, rotate = 270] [color={rgb, 255:red, 0; green, 0; blue, 0 }  ][line width=0.75]    (10.93,-3.29) .. controls (6.95,-1.4) and (3.31,-0.3) .. (0,0) .. controls (3.31,0.3) and (6.95,1.4) .. (10.93,3.29)   ;
\draw  [dash pattern={on 0.84pt off 2.51pt}]  (216.04,233) -- (216.04,291) ;
\draw [shift={(216.04,293)}, rotate = 270] [color={rgb, 255:red, 0; green, 0; blue, 0 }  ][line width=0.75]    (10.93,-3.29) .. controls (6.95,-1.4) and (3.31,-0.3) .. (0,0) .. controls (3.31,0.3) and (6.95,1.4) .. (10.93,3.29)   ;
\draw  [dash pattern={on 0.84pt off 2.51pt}]  (214,233) -- (272,233) ;
\draw [shift={(274,233)}, rotate = 180] [color={rgb, 255:red, 0; green, 0; blue, 0 }  ][line width=0.75]    (10.93,-3.29) .. controls (6.95,-1.4) and (3.31,-0.3) .. (0,0) .. controls (3.31,0.3) and (6.95,1.4) .. (10.93,3.29)   ;
\draw  [dash pattern={on 0.84pt off 2.51pt}]  (214,203) -- (272,203) ;
\draw [shift={(274,203)}, rotate = 180] [color={rgb, 255:red, 0; green, 0; blue, 0 }  ][line width=0.75]    (10.93,-3.29) .. controls (6.95,-1.4) and (3.31,-0.3) .. (0,0) .. controls (3.31,0.3) and (6.95,1.4) .. (10.93,3.29)   ;
\draw  [dash pattern={on 0.84pt off 2.51pt}]  (68,113) -- (10,113) ;
\draw [shift={(8,113)}, rotate = 360] [color={rgb, 255:red, 0; green, 0; blue, 0 }  ][line width=0.75]    (10.93,-3.29) .. controls (6.95,-1.4) and (3.31,-0.3) .. (0,0) .. controls (3.31,0.3) and (6.95,1.4) .. (10.93,3.29)   ;
\draw  [dash pattern={on 0.84pt off 2.51pt}]  (68,83) -- (10,83) ;
\draw [shift={(8,83)}, rotate = 360] [color={rgb, 255:red, 0; green, 0; blue, 0 }  ][line width=0.75]    (10.93,-3.29) .. controls (6.95,-1.4) and (3.31,-0.3) .. (0,0) .. controls (3.31,0.3) and (6.95,1.4) .. (10.93,3.29)   ;
\draw  [dash pattern={on 0.84pt off 2.51pt}]  (98,83) -- (217.02,83) ;
\draw  [dash pattern={on 0.84pt off 2.51pt}]  (217.02,203) -- (217.02,83) ;
\draw  [dash pattern={on 0.84pt off 2.51pt}]  (68,233) -- (68,123) ;
\draw  [dash pattern={on 0.84pt off 2.51pt}]  (188,233) -- (68,233) ;
\draw  [draw opacity=0][fill={rgb, 255:red, 0; green, 0; blue, 0 }  ,fill opacity=1 ] (64.49,83) .. controls (64.49,81.48) and (65.72,80.24) .. (67.24,80.24) .. controls (68.77,80.24) and (70,81.48) .. (70,83) .. controls (70,84.52) and (68.77,85.76) .. (67.24,85.76) .. controls (65.72,85.76) and (64.49,84.52) .. (64.49,83) -- cycle ;
\draw  [draw opacity=0][fill={rgb, 255:red, 0; green, 0; blue, 0 }  ,fill opacity=1 ] (215.49,233) .. controls (215.49,231.48) and (216.72,230.24) .. (218.24,230.24) .. controls (219.77,230.24) and (221,231.48) .. (221,233) .. controls (221,234.52) and (219.77,235.76) .. (218.24,235.76) .. controls (216.72,235.76) and (215.49,234.52) .. (215.49,233) -- cycle ;
\draw    (68,53) -- (98,53) ;
\draw    (68,83) -- (68,123) ;
\draw    (98,83) .. controls (99,106.63) and (93,126.63) .. (68,133) ;
\draw    (187.02,263) -- (217.02,263) ;
\draw    (217.02,203) -- (217.02,233) ;
\draw    (218,183) .. controls (190,187.63) and (190,203.63) .. (188,233) ;
\draw  [dash pattern={on 0.84pt off 2.51pt}]  (118,83) -- (118,233) ;
\draw  [dash pattern={on 0.84pt off 2.51pt}]  (168,83) -- (168,233) ;
\draw  [dash pattern={on 0.84pt off 2.51pt}]  (68,133) -- (218,133) ;
\draw  [dash pattern={on 0.84pt off 2.51pt}]  (68,183) -- (218,183) ;
\draw  [draw opacity=0][fill={rgb, 255:red, 155; green, 155; blue, 155 }  ,fill opacity=0.5 ] (188.02,213) -- (197.02,193) -- (208,184) -- (217.02,183) -- (216.04,263) -- (187.02,263) -- cycle ;
\draw  [draw opacity=0][fill={rgb, 255:red, 207; green, 132; blue, 221 }  ,fill opacity=0.44 ][line width=0.75]  (336,86) -- (485.02,86) -- (485.02,236) -- (336,236) -- cycle ;
\draw  [draw opacity=0][fill={rgb, 255:red, 207; green, 132; blue, 221 }  ,fill opacity=0.44 ][line width=0.75]  (306,86) -- (336,86) -- (336,116) -- (306,116) -- cycle ;
\draw  [draw opacity=0][fill={rgb, 255:red, 208; green, 2; blue, 27 }  ,fill opacity=0.41 ][line width=0.75]  (456.22,236) -- (486.24,236) -- (486.24,296) -- (456.22,296) -- cycle ;
\draw  [draw opacity=0][fill={rgb, 255:red, 207; green, 132; blue, 221 }  ,fill opacity=0.44 ][line width=0.75]  (485.02,206) -- (515.02,206) -- (515.02,236) -- (485.02,236) -- cycle ;
\draw  [dash pattern={on 0.84pt off 2.51pt}]  (335.24,86) -- (335.24,28) ;
\draw [shift={(335.24,26)}, rotate = 90] [color={rgb, 255:red, 0; green, 0; blue, 0 }  ][line width=0.75]    (10.93,-3.29) .. controls (6.95,-1.4) and (3.31,-0.3) .. (0,0) .. controls (3.31,0.3) and (6.95,1.4) .. (10.93,3.29)   ;
\draw  [dash pattern={on 0.84pt off 2.51pt}]  (366,86) -- (366,28) ;
\draw [shift={(366,26)}, rotate = 90] [color={rgb, 255:red, 0; green, 0; blue, 0 }  ][line width=0.75]    (10.93,-3.29) .. controls (6.95,-1.4) and (3.31,-0.3) .. (0,0) .. controls (3.31,0.3) and (6.95,1.4) .. (10.93,3.29)   ;
\draw  [dash pattern={on 0.84pt off 2.51pt}]  (456.24,236) -- (456.24,294) ;
\draw [shift={(456.24,296)}, rotate = 270] [color={rgb, 255:red, 0; green, 0; blue, 0 }  ][line width=0.75]    (10.93,-3.29) .. controls (6.95,-1.4) and (3.31,-0.3) .. (0,0) .. controls (3.31,0.3) and (6.95,1.4) .. (10.93,3.29)   ;
\draw  [dash pattern={on 0.84pt off 2.51pt}]  (486.24,236) -- (486.24,294) ;
\draw [shift={(486.24,296)}, rotate = 270] [color={rgb, 255:red, 0; green, 0; blue, 0 }  ][line width=0.75]    (10.93,-3.29) .. controls (6.95,-1.4) and (3.31,-0.3) .. (0,0) .. controls (3.31,0.3) and (6.95,1.4) .. (10.93,3.29)   ;
\draw  [dash pattern={on 0.84pt off 2.51pt}]  (482,236) -- (540,236) ;
\draw [shift={(542,236)}, rotate = 180] [color={rgb, 255:red, 0; green, 0; blue, 0 }  ][line width=0.75]    (10.93,-3.29) .. controls (6.95,-1.4) and (3.31,-0.3) .. (0,0) .. controls (3.31,0.3) and (6.95,1.4) .. (10.93,3.29)   ;
\draw  [dash pattern={on 0.84pt off 2.51pt}]  (485.02,206) -- (543.02,206) ;
\draw [shift={(545.02,206)}, rotate = 180] [color={rgb, 255:red, 0; green, 0; blue, 0 }  ][line width=0.75]    (10.93,-3.29) .. controls (6.95,-1.4) and (3.31,-0.3) .. (0,0) .. controls (3.31,0.3) and (6.95,1.4) .. (10.93,3.29)   ;
\draw  [dash pattern={on 0.84pt off 2.51pt}]  (336,116) -- (278,116) ;
\draw [shift={(276,116)}, rotate = 360] [color={rgb, 255:red, 0; green, 0; blue, 0 }  ][line width=0.75]    (10.93,-3.29) .. controls (6.95,-1.4) and (3.31,-0.3) .. (0,0) .. controls (3.31,0.3) and (6.95,1.4) .. (10.93,3.29)   ;
\draw  [dash pattern={on 0.84pt off 2.51pt}]  (336,86) -- (278,86) ;
\draw [shift={(276,86)}, rotate = 360] [color={rgb, 255:red, 0; green, 0; blue, 0 }  ][line width=0.75]    (10.93,-3.29) .. controls (6.95,-1.4) and (3.31,-0.3) .. (0,0) .. controls (3.31,0.3) and (6.95,1.4) .. (10.93,3.29)   ;
\draw  [dash pattern={on 0.84pt off 2.51pt}]  (366,86) -- (485.02,86) ;
\draw  [dash pattern={on 0.84pt off 2.51pt}]  (485.02,206) -- (485.02,86) ;
\draw  [dash pattern={on 0.84pt off 2.51pt}]  (336,236) -- (336,116) ;
\draw  [dash pattern={on 0.84pt off 2.51pt}]  (456,236) -- (336,236) ;
\draw  [draw opacity=0][fill={rgb, 255:red, 0; green, 0; blue, 0 }  ,fill opacity=1 ] (332.49,86) .. controls (332.49,84.48) and (333.72,83.24) .. (335.24,83.24) .. controls (336.77,83.24) and (338,84.48) .. (338,86) .. controls (338,87.52) and (336.77,88.76) .. (335.24,88.76) .. controls (333.72,88.76) and (332.49,87.52) .. (332.49,86) -- cycle ;
\draw  [draw opacity=0][fill={rgb, 255:red, 0; green, 0; blue, 0 }  ,fill opacity=1 ] (483.49,236) .. controls (483.49,234.48) and (484.72,233.24) .. (486.24,233.24) .. controls (487.77,233.24) and (489,234.48) .. (489,236) .. controls (489,237.52) and (487.77,238.76) .. (486.24,238.76) .. controls (484.72,238.76) and (483.49,237.52) .. (483.49,236) -- cycle ;
\draw    (335.24,86) -- (365.24,86) ;
\draw    (306,86) -- (306,116) ;
\draw    (386,86) .. controls (383,112.63) and (366,115.63) .. (336,116) ;
\draw    (456.24,236) -- (486.24,236) ;
\draw    (515.02,206) -- (515.02,236) ;
\draw    (485.02,206) .. controls (460,206.63) and (443,212.63) .. (436,236) ;
\draw  [dash pattern={on 0.84pt off 2.51pt}]  (386,86) -- (386,236) ;
\draw  [dash pattern={on 0.84pt off 2.51pt}]  (436,86) -- (436,236) ;
\draw  [dash pattern={on 0.84pt off 2.51pt}]  (336,136) -- (486,136) ;
\draw  [dash pattern={on 0.84pt off 2.51pt}]  (336,186) -- (486,186) ;
\draw  [draw opacity=0][fill={rgb, 255:red, 155; green, 155; blue, 155 }  ,fill opacity=0.5 ] (306,86) -- (386,86) -- (380,104) -- (362,114) -- (336,116) -- (306,116) -- cycle ;
\draw  [draw opacity=0][fill={rgb, 255:red, 155; green, 155; blue, 155 }  ,fill opacity=0.5 ] (445,218) -- (462,209) -- (485.02,206) -- (515.02,206) -- (515.02,236) -- (436,236) -- cycle ;
\draw    (104,75) .. controls (143.6,45.3) and (269.45,41.7) .. (310.77,65.28) ;
\draw [shift={(312,66)}, rotate = 211.35] [color={rgb, 255:red, 0; green, 0; blue, 0 }  ][line width=0.75]    (10.93,-3.29) .. controls (6.95,-1.4) and (3.31,-0.3) .. (0,0) .. controls (3.31,0.3) and (6.95,1.4) .. (10.93,3.29)   ;
\draw    (224,255) .. controls (275.48,269.49) and (394.59,282.37) .. (440.63,247.08) ;
\draw [shift={(442,246)}, rotate = 140.85] [color={rgb, 255:red, 0; green, 0; blue, 0 }  ][line width=0.75]    (10.93,-3.29) .. controls (6.95,-1.4) and (3.31,-0.3) .. (0,0) .. controls (3.31,0.3) and (6.95,1.4) .. (10.93,3.29)   ;
\draw  [draw opacity=0][fill={rgb, 255:red, 189; green, 16; blue, 224 }  ,fill opacity=0.48 ] (68,133) -- (88,123) -- (98,103) -- (98,83) -- (118,83) -- (118,233) -- (68,233) -- cycle ;
\draw  [draw opacity=0][fill={rgb, 255:red, 189; green, 16; blue, 224 }  ,fill opacity=0.48 ] (336,116) -- (362,114) -- (380,104) -- (386,86) -- (485.02,86) -- (486,136) -- (336,136) -- cycle ;
\draw  [draw opacity=0][fill={rgb, 255:red, 207; green, 132; blue, 221 }  ,fill opacity=0.44 ][line width=0.75]  (118,83) -- (168,83) -- (168,233) -- (118,233) -- cycle ;
\draw  [draw opacity=0][fill={rgb, 255:red, 207; green, 132; blue, 221 }  ,fill opacity=0.44 ][line width=0.75]  (336,136) -- (486,136) -- (486,186) -- (336,186) -- cycle ;
\draw  [draw opacity=0][fill={rgb, 255:red, 207; green, 132; blue, 221 }  ,fill opacity=0.44 ][line width=0.75]  (68,53) -- (98,53) -- (98,83) -- (68,83) -- cycle ;
\draw  [draw opacity=0][fill={rgb, 255:red, 155; green, 155; blue, 155 }  ,fill opacity=0.5 ] (68,53) -- (98,53) -- (98,83) -- (98,103) -- (88,123) -- (68,133) -- cycle ;
\draw  [draw opacity=0][fill={rgb, 255:red, 184; green, 233; blue, 134 }  ,fill opacity=0.42 ][line width=0.75]  (335.24,26) -- (365.24,26) -- (365.24,86) -- (335.24,86) -- cycle ;
\draw  [draw opacity=0][fill={rgb, 255:red, 74; green, 144; blue, 226 }  ,fill opacity=0.55 ][line width=0.75]  (276,86) -- (306,86) -- (306,116) -- (276,116) -- cycle ;
\draw  [draw opacity=0][fill={rgb, 255:red, 220; green, 149; blue, 91 }  ,fill opacity=0.49 ][line width=0.75]  (515.02,206) -- (545.02,206) -- (545.02,236) -- (515.02,236) -- cycle ;
\draw  [draw opacity=0][fill={rgb, 255:red, 208; green, 2; blue, 27 }  ,fill opacity=0.41 ][line width=0.75]  (187.02,263) -- (216.04,263) -- (216.04,293) -- (187.02,293) -- cycle ;

\draw (65,7.4) node [anchor=north west][inner sep=0.75pt]  [font=\small]  {$E_{T}( 0,1)$};
\draw (8,92.4) node [anchor=north west][inner sep=0.75pt]  [font=\small]  {$E_{L}( 0,1)$};
\draw (225,211.4) node [anchor=north west][inner sep=0.75pt]  [font=\small]  {$E_{R}( 1,0)$};
\draw (171,295.4) node [anchor=north west][inner sep=0.75pt]  [font=\small]  {$E_{B}( 1,0)$};
\draw (43,69.4) node [anchor=north west][inner sep=0.75pt]  [font=\tiny]  {$( 0,1)$};
\draw (223,239.4) node [anchor=north west][inner sep=0.75pt]  [font=\tiny]  {$( 1,0)$};
\draw (129,145.4) node [anchor=north west][inner sep=0.75pt]    {$\dotsc $};
\draw (146.48,172.05) node [anchor=north west][inner sep=0.75pt]  [rotate=-269.83]  {$\dotsc $};
\draw (333,10.4) node [anchor=north west][inner sep=0.75pt]  [font=\small]  {$E_{T}( 0,1)$};
\draw (250,94.4) node [anchor=north west][inner sep=0.75pt]  [font=\small]  {$E_{L}( 0,1)$};
\draw (524,212.4) node [anchor=north west][inner sep=0.75pt]  [font=\small]  {$E_{R}( 1,0)$};
\draw (439,298.4) node [anchor=north west][inner sep=0.75pt]  [font=\small]  {$E_{B}( 1,0)$};
\draw (311,72.4) node [anchor=north west][inner sep=0.75pt]  [font=\tiny]  {$( 0,1)$};
\draw (491,242.4) node [anchor=north west][inner sep=0.75pt]  [font=\tiny]  {$( 1,0)$};
\draw (397,148.4) node [anchor=north west][inner sep=0.75pt]    {$\dotsc $};
\draw (418.4,174.99) node [anchor=north west][inner sep=0.75pt]  [rotate=-269.83]  {$\dotsc $};
\draw (185,15.4) node [anchor=north west][inner sep=0.75pt]    {$h_{( 0,1)}$};
\draw (303,286.4) node [anchor=north west][inner sep=0.75pt]    {$h_{( 1,0)}$};
\draw (72,86.4) node [anchor=north west][inner sep=0.75pt]    {$P_{1}$};
\draw (340,89.4) node [anchor=north west][inner sep=0.75pt]    {$P_{1} '$};
\draw (195,207.4) node [anchor=north west][inner sep=0.75pt]    {$P_{2}$};
\draw (464,212.4) node [anchor=north west][inner sep=0.75pt]    {$P_{2} '$};

\end{tikzpicture}
\end{center}
\caption{The homeomorphism $f_1:V_1 \rightarrow H_1$ with colors indicating the image of each region. }
    \label{fig:integer extended piece map}
\end{figure}

Define a homeomorphism $f_1:V_1 \rightarrow H_1$ by
$$f_1(x,y) = \begin{cases}
    f_0(x,y) & \ \text{if } (x,y) \in V_0 - (P_1 \cup P_2) \\
    h_{(0,1)}(x,y) & \ \text{if } (x,y) \in P_1 \\
    h_{(1,0)}(x,y) & \ \text{if } (x,y) \in P_2 \\
    (x-1,y) & \ \text{if } (x,y) \in E_L(0,1) \\
    (x,y-1) & \ \text{if } (x,y) \in E_T(0,1)  - P_1\\
    (x+1,y) & \ \text{if } (x,y) \in E_R(1,0) \\
    (x, y+1) & \ \text{if } (x,y) \in E_B(1,0) -P_2
\end{cases}$$

\noindent where $h_{(0,1)}:P_1 \rightarrow P_1' \text{ and }h_{(1,0)}:P_2 \rightarrow P_2'$ are homeomorphisms called the \textit{switch maps} chosen so that $f_1$ is a well-defined homeomorphism. Note that $f_1$ acts exactly as we had hoped: it restricts to $f_0$ on the interior of the square minus the switch regions, acts by a translation in the infinite strips minus the switch regions, and transitions between $f_0$ and a translation in the switch regions. 

Let $L_1$ denote the union of the left edges of the vertical strips in $V_1$. By left edge of $(V_{1,1}^{(1)})'$, we mean \[(\{0\} \times [0, 1-\frac{1}{d^2}]) \cup (-\infty,0] \times \{1-\frac{1}{d^2}\}).\] Similarly, define $R_1$ to be the union of the right edges of the vertical strips, define $T_1$ to be the union of the top edges of the horizontal strips, and define $B_1$ to be the union of the bottom edges of the horizontal strips. Next we define four edge maps, coming from the different ways of extending $f_1$ and $f_1^{-1}$ to the boundaries of the partitions.

Given a point $x \in L_1$, there is a sequence $x_i \in V_1$ converging to $x$ ``from the right", i.e. from the interior of the vertical strip whose left edge $x$ lives in. In the case that $x$ is in the bottom boundary component of $E_L(0,1)$, that is $x \in ((-\infty,0] \times \{1-\frac{1}{d^2}\})$, then by ``from the right" we mean any sequence in $V_1$ converging to $x$. Define the \emph{left edge map of $f_1$} as \[f_{1,L}:L_1\rightarrow L_1\] \[f_{1,L}(x) := \lim_{i \rightarrow \infty} f_0(x_i),\] where the limit is taken ``from the right" as described above. (See \Cref{from the right}.) 

Given a point $x \in R_1$, there is a sequence $x_i \in V_1$ converging to $x$ ``from the left", i.e. from the interior of the vertical strip whose right edge $x$ lives in. In the case that $x$ is in the top boundary component of $E_R(1, 0)$ then by ``from the left" we mean any sequence in $V_1$ converging to $x$. Define the \emph{right edge map of $f_1$} as
\[f_{1,R}:R_1\rightarrow R_1\] \[f_{1,R}(x) := \lim_{i \rightarrow \infty} f_0(x_i),\] where the limit is taken ``from the left" as described above.

Similarly, define the top edge map of $f_1^{-1}$, $$f_{1,T}^{-1}:T_1 \rightarrow T_1$$ and the bottom edge map of $f_1^{-1}$, $$f_{1,B}^{-1}:B_1 \rightarrow B_1$$ using the analogous notions of convergence ``from below" and ``from above", respectively. 


Now, we build a 2-complex 
$$\Sigma_2 := \Sigma_1 /\sim,$$
where $\sim$ is an equivalence relation on $\Sigma_1$ generated by $z \sim w$ if
\begin{enumerate}
    \item[(i)] there exists $x \in L_1 \cap R_1$ and $N \in \mathbb{Z}_{\geq 1}$ such that $f_{1,L}^N(x) =z$ and $f_{1,R}^N(x) = w$, or
    \item[(ii)]there exists $x \in T_1 \cap B_1$ and $N \in \mathbb{Z}_{\geq 1}$ such that $f_{1,T}^{-N}(x) =z$ and $f_{1,B}^{-N}(x) = w$.
\end{enumerate}



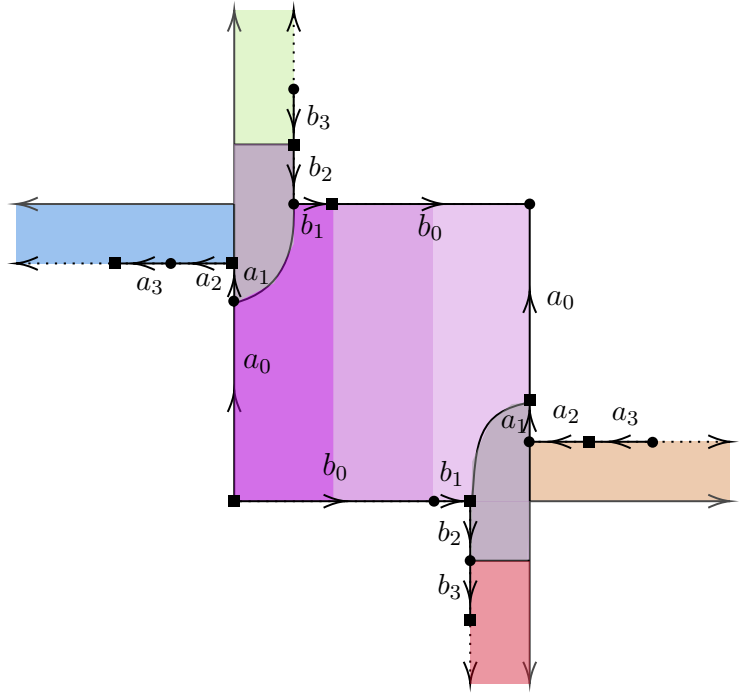
\begin{figure}[htbp]
\begin{center}

\tikzset{every picture/.style={line width=0.75pt}} 

\begin{tikzpicture}[x=0.75pt,y=0.75pt,yscale=-1,xscale=1]

\draw  [draw opacity=0][fill={rgb, 255:red, 207; green, 132; blue, 221 }  ,fill opacity=0.44 ][line width=0.75]  (130,108) -- (279.02,108) -- (279.02,258) -- (130,258) -- cycle ;
\draw  [draw opacity=0][fill={rgb, 255:red, 184; green, 233; blue, 134 }  ,fill opacity=0.42 ][line width=0.75]  (130,10) -- (160,10) -- (160,78) -- (130,78) -- cycle ;
\draw  [draw opacity=0][fill={rgb, 255:red, 74; green, 144; blue, 226 }  ,fill opacity=0.55 ][line width=0.75]  (20,108) -- (130,108) -- (130,138) -- (20,138) -- cycle ;
\draw  [draw opacity=0][fill={rgb, 255:red, 207; green, 132; blue, 221 }  ,fill opacity=0.44 ][line width=0.75]  (250,258) -- (279.02,258) -- (279.02,288) -- (250,288) -- cycle ;
\draw  [draw opacity=0][fill={rgb, 255:red, 220; green, 149; blue, 91 }  ,fill opacity=0.49 ][line width=0.75]  (279.02,228) -- (380,228) -- (380,258) -- (279.02,258) -- cycle ;
\draw [color={rgb, 255:red, 74; green, 74; blue, 74 }  ,draw opacity=1 ]   (130,108) -- (130,12) ;
\draw [shift={(130,10)}, rotate = 90] [color={rgb, 255:red, 74; green, 74; blue, 74 }  ,draw opacity=1 ][line width=0.75]    (10.93,-3.29) .. controls (6.95,-1.4) and (3.31,-0.3) .. (0,0) .. controls (3.31,0.3) and (6.95,1.4) .. (10.93,3.29)   ;
\draw  [dash pattern={on 0.84pt off 2.51pt}]  (160,108) -- (160,12) ;
\draw [shift={(160,10)}, rotate = 90] [color={rgb, 255:red, 0; green, 0; blue, 0 }  ][line width=0.75]    (10.93,-3.29) .. controls (6.95,-1.4) and (3.31,-0.3) .. (0,0) .. controls (3.31,0.3) and (6.95,1.4) .. (10.93,3.29)   ;
\draw  [dash pattern={on 0.84pt off 2.51pt}]  (249.02,258) -- (249.02,348) ;
\draw [shift={(249.02,350)}, rotate = 270] [color={rgb, 255:red, 0; green, 0; blue, 0 }  ][line width=0.75]    (10.93,-3.29) .. controls (6.95,-1.4) and (3.31,-0.3) .. (0,0) .. controls (3.31,0.3) and (6.95,1.4) .. (10.93,3.29)   ;
\draw [color={rgb, 255:red, 74; green, 74; blue, 74 }  ,draw opacity=1 ]   (279.02,258) -- (279.02,348) ;
\draw [shift={(279.02,350)}, rotate = 270] [color={rgb, 255:red, 74; green, 74; blue, 74 }  ,draw opacity=1 ][line width=0.75]    (10.93,-3.29) .. controls (6.95,-1.4) and (3.31,-0.3) .. (0,0) .. controls (3.31,0.3) and (6.95,1.4) .. (10.93,3.29)   ;
\draw [color={rgb, 255:red, 74; green, 74; blue, 74 }  ,draw opacity=1 ]   (279.02,258) -- (378,258) ;
\draw [shift={(380,258)}, rotate = 180] [color={rgb, 255:red, 74; green, 74; blue, 74 }  ,draw opacity=1 ][line width=0.75]    (10.93,-3.29) .. controls (6.95,-1.4) and (3.31,-0.3) .. (0,0) .. controls (3.31,0.3) and (6.95,1.4) .. (10.93,3.29)   ;
\draw  [dash pattern={on 0.84pt off 2.51pt}]  (279,228) -- (378,228) ;
\draw [shift={(380,228)}, rotate = 180] [color={rgb, 255:red, 0; green, 0; blue, 0 }  ][line width=0.75]    (10.93,-3.29) .. controls (6.95,-1.4) and (3.31,-0.3) .. (0,0) .. controls (3.31,0.3) and (6.95,1.4) .. (10.93,3.29)   ;
\draw  [dash pattern={on 0.84pt off 2.51pt}]  (130,138) -- (22,138) ;
\draw [shift={(20,138)}, rotate = 360] [color={rgb, 255:red, 0; green, 0; blue, 0 }  ][line width=0.75]    (10.93,-3.29) .. controls (6.95,-1.4) and (3.31,-0.3) .. (0,0) .. controls (3.31,0.3) and (6.95,1.4) .. (10.93,3.29)   ;
\draw [color={rgb, 255:red, 74; green, 74; blue, 74 }  ,draw opacity=1 ]   (130,108) -- (22,108) ;
\draw [shift={(20,108)}, rotate = 360] [color={rgb, 255:red, 74; green, 74; blue, 74 }  ,draw opacity=1 ][line width=0.75]    (10.93,-3.29) .. controls (6.95,-1.4) and (3.31,-0.3) .. (0,0) .. controls (3.31,0.3) and (6.95,1.4) .. (10.93,3.29)   ;
\draw  [dash pattern={on 0.84pt off 2.51pt}]  (160,108) -- (279.02,108) ;
\draw    (130,258) -- (130,158) ;
\draw [shift={(130,202)}, rotate = 90] [color={rgb, 255:red, 0; green, 0; blue, 0 }  ][line width=0.75]    (10.93,-3.29) .. controls (6.95,-1.4) and (3.31,-0.3) .. (0,0) .. controls (3.31,0.3) and (6.95,1.4) .. (10.93,3.29)   ;
\draw  [dash pattern={on 0.84pt off 2.51pt}]  (250,258) -- (130,258) ;
\draw    (130,78) -- (160,78) ;
\draw    (130,108) -- (130,140) ;
\draw    (160,108) .. controls (161,131.63) and (155,151.63) .. (130,158) ;
\draw    (249.02,288) -- (279.02,288) ;
\draw    (279.02,228) -- (279.02,258) ;
\draw    (280,208) .. controls (252,212.63) and (252,228.63) .. (250,258) ;
\draw  [draw opacity=0][fill={rgb, 255:red, 155; green, 155; blue, 155 }  ,fill opacity=0.5 ] (250,238) -- (259,218) -- (269.98,209) -- (279,208) -- (278.02,288) -- (249,288) -- cycle ;
\draw  [draw opacity=0][fill={rgb, 255:red, 189; green, 16; blue, 224 }  ,fill opacity=0.48 ] (130,158) -- (150,148) -- (160,128) -- (160,108) -- (180,108) -- (180,258) -- (130,258) -- cycle ;
\draw  [draw opacity=0][fill={rgb, 255:red, 207; green, 132; blue, 221 }  ,fill opacity=0.44 ][line width=0.75]  (180,108) -- (230,108) -- (230,258) -- (180,258) -- cycle ;
\draw  [draw opacity=0][fill={rgb, 255:red, 207; green, 132; blue, 221 }  ,fill opacity=0.44 ][line width=0.75]  (130,78) -- (160,78) -- (160,108) -- (130,108) -- cycle ;
\draw  [draw opacity=0][fill={rgb, 255:red, 155; green, 155; blue, 155 }  ,fill opacity=0.5 ] (130,78) -- (160,78) -- (160,108) -- (160,128) -- (150,148) -- (130,158) -- cycle ;
\draw  [draw opacity=0][fill={rgb, 255:red, 208; green, 2; blue, 27 }  ,fill opacity=0.41 ][line width=0.75]  (249.02,288) -- (279.02,288) -- (279.02,350) -- (249.02,350) -- cycle ;
\draw    (279.02,208) -- (279.02,108) ;
\draw [shift={(279.02,152)}, rotate = 90] [color={rgb, 255:red, 0; green, 0; blue, 0 }  ][line width=0.75]    (10.93,-3.29) .. controls (6.95,-1.4) and (3.31,-0.3) .. (0,0) .. controls (3.31,0.3) and (6.95,1.4) .. (10.93,3.29)   ;
\draw    (130,258) -- (230,258) ;
\draw [shift={(186,258)}, rotate = 180] [color={rgb, 255:red, 0; green, 0; blue, 0 }  ][line width=0.75]    (10.93,-3.29) .. controls (6.95,-1.4) and (3.31,-0.3) .. (0,0) .. controls (3.31,0.3) and (6.95,1.4) .. (10.93,3.29)   ;
\draw    (180,108) -- (279.02,108) ;
\draw [shift={(235.51,108)}, rotate = 180] [color={rgb, 255:red, 0; green, 0; blue, 0 }  ][line width=0.75]    (10.93,-3.29) .. controls (6.95,-1.4) and (3.31,-0.3) .. (0,0) .. controls (3.31,0.3) and (6.95,1.4) .. (10.93,3.29)   ;
\draw    (130,160) -- (130,140) ;
\draw [shift={(130,144)}, rotate = 90] [color={rgb, 255:red, 0; green, 0; blue, 0 }  ][line width=0.75]    (10.93,-3.29) .. controls (6.95,-1.4) and (3.31,-0.3) .. (0,0) .. controls (3.31,0.3) and (6.95,1.4) .. (10.93,3.29)   ;
\draw    (279,228) -- (279,208) ;
\draw [shift={(279,212)}, rotate = 90] [color={rgb, 255:red, 0; green, 0; blue, 0 }  ][line width=0.75]    (10.93,-3.29) .. controls (6.95,-1.4) and (3.31,-0.3) .. (0,0) .. controls (3.31,0.3) and (6.95,1.4) .. (10.93,3.29)   ;
\draw    (160,108) -- (180,108) ;
\draw [shift={(176,108)}, rotate = 180] [color={rgb, 255:red, 0; green, 0; blue, 0 }  ][line width=0.75]    (10.93,-3.29) .. controls (6.95,-1.4) and (3.31,-0.3) .. (0,0) .. controls (3.31,0.3) and (6.95,1.4) .. (10.93,3.29)   ;
\draw    (229.02,258) -- (249.02,258) ;
\draw [shift={(245.02,258)}, rotate = 180] [color={rgb, 255:red, 0; green, 0; blue, 0 }  ][line width=0.75]    (10.93,-3.29) .. controls (6.95,-1.4) and (3.31,-0.3) .. (0,0) .. controls (3.31,0.3) and (6.95,1.4) .. (10.93,3.29)   ;
\draw    (160,78) -- (160,108) ;
\draw [shift={(160,99)}, rotate = 270] [color={rgb, 255:red, 0; green, 0; blue, 0 }  ][line width=0.75]    (10.93,-3.29) .. controls (6.95,-1.4) and (3.31,-0.3) .. (0,0) .. controls (3.31,0.3) and (6.95,1.4) .. (10.93,3.29)   ;
\draw    (310,228) -- (280,228) ;
\draw [shift={(289,228)}, rotate = 360] [color={rgb, 255:red, 0; green, 0; blue, 0 }  ][line width=0.75]    (10.93,-3.29) .. controls (6.95,-1.4) and (3.31,-0.3) .. (0,0) .. controls (3.31,0.3) and (6.95,1.4) .. (10.93,3.29)   ;
\draw    (340,228) -- (310,228) ;
\draw [shift={(319,228)}, rotate = 360] [color={rgb, 255:red, 0; green, 0; blue, 0 }  ][line width=0.75]    (10.93,-3.29) .. controls (6.95,-1.4) and (3.31,-0.3) .. (0,0) .. controls (3.31,0.3) and (6.95,1.4) .. (10.93,3.29)   ;
\draw    (100,138) -- (70,138) ;
\draw [shift={(79,138)}, rotate = 360] [color={rgb, 255:red, 0; green, 0; blue, 0 }  ][line width=0.75]    (10.93,-3.29) .. controls (6.95,-1.4) and (3.31,-0.3) .. (0,0) .. controls (3.31,0.3) and (6.95,1.4) .. (10.93,3.29)   ;
\draw    (130,138) -- (100,138) ;
\draw [shift={(109,138)}, rotate = 360] [color={rgb, 255:red, 0; green, 0; blue, 0 }  ][line width=0.75]    (10.93,-3.29) .. controls (6.95,-1.4) and (3.31,-0.3) .. (0,0) .. controls (3.31,0.3) and (6.95,1.4) .. (10.93,3.29)   ;
\draw    (160,50) -- (160,80) ;
\draw [shift={(160,71)}, rotate = 270] [color={rgb, 255:red, 0; green, 0; blue, 0 }  ][line width=0.75]    (10.93,-3.29) .. controls (6.95,-1.4) and (3.31,-0.3) .. (0,0) .. controls (3.31,0.3) and (6.95,1.4) .. (10.93,3.29)   ;
\draw    (249.02,258) -- (249.02,288) ;
\draw [shift={(249.02,279)}, rotate = 270] [color={rgb, 255:red, 0; green, 0; blue, 0 }  ][line width=0.75]    (10.93,-3.29) .. controls (6.95,-1.4) and (3.31,-0.3) .. (0,0) .. controls (3.31,0.3) and (6.95,1.4) .. (10.93,3.29)   ;
\draw    (249,288) -- (249,318) ;
\draw [shift={(249,309)}, rotate = 270] [color={rgb, 255:red, 0; green, 0; blue, 0 }  ][line width=0.75]    (10.93,-3.29) .. controls (6.95,-1.4) and (3.31,-0.3) .. (0,0) .. controls (3.31,0.3) and (6.95,1.4) .. (10.93,3.29)   ;
\draw  [draw opacity=0][fill={rgb, 255:red, 0; green, 0; blue, 0 }  ,fill opacity=1 ] (338.24,228.24) .. controls (338.24,226.72) and (339.48,225.49) .. (341,225.49) .. controls (342.52,225.49) and (343.76,226.72) .. (343.76,228.24) .. controls (343.76,229.77) and (342.52,231) .. (341,231) .. controls (339.48,231) and (338.24,229.77) .. (338.24,228.24) -- cycle ;
\draw  [draw opacity=0][fill={rgb, 255:red, 0; green, 0; blue, 0 }  ,fill opacity=1 ] (276,228) .. controls (276,226.48) and (277.23,225.24) .. (278.76,225.24) .. controls (280.28,225.24) and (281.51,226.48) .. (281.51,228) .. controls (281.51,229.52) and (280.28,230.76) .. (278.76,230.76) .. controls (277.23,230.76) and (276,229.52) .. (276,228) -- cycle ;
\draw  [draw opacity=0][fill={rgb, 255:red, 0; green, 0; blue, 0 }  ,fill opacity=1 ] (276.26,108) .. controls (276.26,106.48) and (277.5,105.24) .. (279.02,105.24) .. controls (280.54,105.24) and (281.78,106.48) .. (281.78,108) .. controls (281.78,109.52) and (280.54,110.76) .. (279.02,110.76) .. controls (277.5,110.76) and (276.26,109.52) .. (276.26,108) -- cycle ;
\draw  [draw opacity=0][fill={rgb, 255:red, 0; green, 0; blue, 0 }  ,fill opacity=1 ] (157.24,108) .. controls (157.24,106.48) and (158.48,105.24) .. (160,105.24) .. controls (161.52,105.24) and (162.76,106.48) .. (162.76,108) .. controls (162.76,109.52) and (161.52,110.76) .. (160,110.76) .. controls (158.48,110.76) and (157.24,109.52) .. (157.24,108) -- cycle ;
\draw  [draw opacity=0][fill={rgb, 255:red, 0; green, 0; blue, 0 }  ,fill opacity=1 ] (157.24,50) .. controls (157.24,48.48) and (158.48,47.24) .. (160,47.24) .. controls (161.52,47.24) and (162.76,48.48) .. (162.76,50) .. controls (162.76,51.52) and (161.52,52.76) .. (160,52.76) .. controls (158.48,52.76) and (157.24,51.52) .. (157.24,50) -- cycle ;
\draw  [draw opacity=0][fill={rgb, 255:red, 0; green, 0; blue, 0 }  ,fill opacity=1 ] (95.24,138) .. controls (95.24,136.48) and (96.48,135.24) .. (98,135.24) .. controls (99.52,135.24) and (100.76,136.48) .. (100.76,138) .. controls (100.76,139.52) and (99.52,140.76) .. (98,140.76) .. controls (96.48,140.76) and (95.24,139.52) .. (95.24,138) -- cycle ;
\draw  [draw opacity=0][fill={rgb, 255:red, 0; green, 0; blue, 0 }  ,fill opacity=1 ] (127,157) .. controls (127,155.48) and (128.23,154.24) .. (129.76,154.24) .. controls (131.28,154.24) and (132.51,155.48) .. (132.51,157) .. controls (132.51,158.52) and (131.28,159.76) .. (129.76,159.76) .. controls (128.23,159.76) and (127,158.52) .. (127,157) -- cycle ;
\draw  [draw opacity=0][fill={rgb, 255:red, 0; green, 0; blue, 0 }  ,fill opacity=1 ] (228,258) .. controls (228,256.48) and (229.23,255.24) .. (230.76,255.24) .. controls (232.28,255.24) and (233.51,256.48) .. (233.51,258) .. controls (233.51,259.52) and (232.28,260.76) .. (230.76,260.76) .. controls (229.23,260.76) and (228,259.52) .. (228,258) -- cycle ;
\draw  [draw opacity=0][fill={rgb, 255:red, 0; green, 0; blue, 0 }  ,fill opacity=1 ] (246.24,288) .. controls (246.24,286.48) and (247.48,285.24) .. (249,285.24) .. controls (250.52,285.24) and (251.76,286.48) .. (251.76,288) .. controls (251.76,289.52) and (250.52,290.76) .. (249,290.76) .. controls (247.48,290.76) and (246.24,289.52) .. (246.24,288) -- cycle ;
\draw  [draw opacity=0][fill={rgb, 255:red, 0; green, 0; blue, 0 }  ,fill opacity=1 ] (127.12,255.12) -- (132.88,255.12) -- (132.88,260.88) -- (127.12,260.88) -- cycle ;
\draw  [draw opacity=0][fill={rgb, 255:red, 0; green, 0; blue, 0 }  ,fill opacity=1 ] (176.24,105.24) -- (182,105.24) -- (182,111) -- (176.24,111) -- cycle ;
\draw  [draw opacity=0][fill={rgb, 255:red, 0; green, 0; blue, 0 }  ,fill opacity=1 ] (246.12,255.12) -- (251.88,255.12) -- (251.88,260.88) -- (246.12,260.88) -- cycle ;
\draw  [draw opacity=0][fill={rgb, 255:red, 0; green, 0; blue, 0 }  ,fill opacity=1 ] (157.12,75.12) -- (162.88,75.12) -- (162.88,80.88) -- (157.12,80.88) -- cycle ;
\draw  [draw opacity=0][fill={rgb, 255:red, 0; green, 0; blue, 0 }  ,fill opacity=1 ] (246.12,315.12) -- (251.88,315.12) -- (251.88,320.88) -- (246.12,320.88) -- cycle ;
\draw  [draw opacity=0][fill={rgb, 255:red, 0; green, 0; blue, 0 }  ,fill opacity=1 ] (276.24,204.24) -- (282,204.24) -- (282,210) -- (276.24,210) -- cycle ;
\draw  [draw opacity=0][fill={rgb, 255:red, 0; green, 0; blue, 0 }  ,fill opacity=1 ] (126.24,135) -- (132,135) -- (132,140.76) -- (126.24,140.76) -- cycle ;
\draw  [draw opacity=0][fill={rgb, 255:red, 0; green, 0; blue, 0 }  ,fill opacity=1 ] (306.24,225.24) -- (312,225.24) -- (312,231) -- (306.24,231) -- cycle ;
\draw  [draw opacity=0][fill={rgb, 255:red, 0; green, 0; blue, 0 }  ,fill opacity=1 ] (67.12,135.12) -- (72.88,135.12) -- (72.88,140.88) -- (67.12,140.88) -- cycle ;

\draw (133,182.4) node [anchor=north west][inner sep=0.75pt]    {$a_{0}$};
\draw (286,150.4) node [anchor=north west][inner sep=0.75pt]    {$a_{0}$};
\draw (173,232.4) node [anchor=north west][inner sep=0.75pt]    {$b_{0}$};
\draw (221,111.4) node [anchor=north west][inner sep=0.75pt]    {$b_{0}$};
\draw (133,138.4) node [anchor=north west][inner sep=0.75pt]  [font=\small]  {$a_{1}$};
\draw (263,213.4) node [anchor=north west][inner sep=0.75pt]  [font=\small]  {$a_{1}$};
\draw (162,111.4) node [anchor=north west][inner sep=0.75pt]  [font=\small]  {$b_{1}$};
\draw (232,236.4) node [anchor=north west][inner sep=0.75pt]  [font=\small]  {$b_{1}$};
\draw (166,82.4) node [anchor=north west][inner sep=0.75pt]  [font=\small]  {$b_{2}$};
\draw (289,208.4) node [anchor=north west][inner sep=0.75pt]  [font=\small]  {$a_{2}$};
\draw (319,209.4) node [anchor=north west][inner sep=0.75pt]  [font=\small]  {$a_{3}$};
\draw (109,141.4) node [anchor=north west][inner sep=0.75pt]  [font=\small]  {$a_{2}$};
\draw (79,143.4) node [anchor=north west][inner sep=0.75pt]  [font=\small]  {$a_{3}$};
\draw (165,56.4) node [anchor=north west][inner sep=0.75pt]  [font=\small]  {$b_{3}$};
\draw (231,266.4) node [anchor=north west][inner sep=0.75pt]  [font=\small]  {$b_{2}$};
\draw (231,293.4) node [anchor=north west][inner sep=0.75pt]  [font=\small]  {$b_{3}$};

\end{tikzpicture}
\end{center}
\caption{The 2-complex $\Sigma_2$ corresponding to the integer $d=3$. }
    \label{fig:integer infinite 2-complex}
\end{figure}

A 2-complex is a topological surface with boundary if the link of every point is homeomorphic to $S^1$ or $[0,1]$. Note that $\Sigma_2$ has vertices with infinitely many vertices in their equivalence class. 
In this case, such a vertex necessarily has link $\mathbb R$. Indeed, this is the case for $\text{link}([(0,0)]_{\sim})$ and $\text{link}([(1,1)]_\sim)$. Observe that in \Cref{fig:integer infinite 2-complex}, $[(0,0)]_{\sim}$ is the collection of black square vertices and $[(1,1)]_{\sim}$ is the collection of black circular vertices. 

Let $\Sigma$ be the surface built from $\Sigma_2$ by first replacing $[(0,0)]_{\sim}$ and $([(1,1)]_\sim$ with their links and then doubling across the boundary. Observe that $\Sigma$ is a closed connected surface which is homeomorphic to the ladder surface, and $f_1$ uniquely extends to a homeomorphism $f:\Sigma \rightarrow \Sigma.$

The induced regions in $\Sigma$ coming from the blue and orange extended rectangles in \Cref{fig:integer infinite 2-complex}, together with their doubles, define a nesting neighborhood of the attracting end of $\Sigma$. The induced regions in $\Sigma$ coming from the green and red extended rectangles, together with their doubles define a nesting neighborhood of the repelling end of $\Sigma$. Therefore $f$ is end-periodic by construction. Moreover, the rectangular decomposition of the subsurface of $\Sigma$ coming from the purple region in the figure, together with its double, define a Markov decomposition for a core of $f$, with incidence matrix $\begin{bmatrix}
    d & 0 \\ 0 & d
\end{bmatrix}$. Therefore, the stretch factor of $f$ is $d$. \end{proof}

\section{A Piecewise Linear Map}\label{linear map}

For the remaining sections of the paper, fix an $n \times n$ irreducible matrix $M=(m_{ij})$ with non-negative integer entries whose spectral radius is equal to $\lambda$. Let $\vec{\eta} = (\eta_1, \eta_2, \dots, \eta_n)$ (resp. $\vec{\omega}=(\omega_1, \omega_2, \dots,\omega_n)$) be the right (resp. left) Perron-Frobenius eigenvector for $M$. 

\begin{example}\label[example]{example: run ex}
    (Running Example) Let 
    $$M = \begin{bmatrix}
        0 & 0 & 1 & 0 \\
        1 & 0 & 0 & 1 \\
        0 & 0 & 0 & 1\\
        1 & 2 & 0 & 0
    \end{bmatrix}.$$
    The characteristic polynomial of $M$ is $x^4-2x^2-x-2$. The spectral radius of $M$, $\lambda$, is approximately $1.785$. Observe that since det$(M) = -2$, $\lambda$ is not an algebraic unit, and hence not the stretch factor of any pseudo-Anosov map on a finite-type surface. Approximately, $\vec{\eta}= (0.31, \ 0.74, \ 0.56, \ 1)$ and $\vec{\omega}=(1.19, \ 1.12, \ 0.67, \ 1)$. 
\end{example}

Let $\Sigma_0$ be the disjoint union of $n$ rectangles denoted $Q_1$, \dots, $Q_n$ with heights given by $\vec{\eta}$ and widths given by $\vec{\omega}$. Partition each $Q_k$ into $\sum_{i=1}^{n}m_{ik}$ many vertical strips. Denote the interiors of the vertical strips $V^{(k)}_{i,j}$ for $j \in \{1, \dots, m_{ik}\}$, ordered from left to right:
$$V^{(k)}_{1,1}, \dots, V^{(k)}_{1,m_{1k}},V^{(k)}_{2,1}, \dots, V^{(k)}_{2,m_{2k}}, \dots, V^{(k)}_{n,1}, \dots, V^{(k)}_{n,m_{nk}}.$$
 Observe that since $\vec{\omega}$ is a left eigenvector of $M$ with eigenvalue $\lambda$, 
$$\sum_{i=1}^n m_{ik}( \lambda^{-1} \omega_i) = \omega_k.$$
Thus we can choose the vertical strips so that each $V^{(k)}_{i,j}$ has width $\lambda^{-1} \omega_i $.

Partition each $Q_k$ into $\sum_{i=1}^{n}m_{ki}$ many horizontal strips. Denote the interiors of the horizontal strips $H^{(k)}_{i,j}$ for $j \in \{1, \dots, m_{ki}\}$, ordered from top to bottom:
$$H^{(k)}_{1,1}, \dots, H^{(k)}_{1,m_{1k}},H^{(k)}_{2,1}, \dots, H^{(k)}_{2,m_{2k}}, \dots, H^{(k)}_{n,1}, \dots, H^{(k)}_{n,m_{nk}}.$$
Since $\vec{\eta}$ is a right eigenvector of $M$ with eigenvalue $\lambda$, 
$$\sum_{i=1}^n m_{ki}(\lambda^{-1} \eta_i) = \eta_k.$$
Hence we can choose horizontal strips so that each $H^{(k)}_{i,j}$ has height $\lambda^{-1} \eta_i$.\\

\begin{definition} Consider the foliation of $\Sigma_0$ by vertical lines segments and the foliation of $\Sigma_0$ by horizontal line segments. These foliations can be straightened to laminations of $\Sigma_0$ which we will call the \emph{vertical and horizontal laminations of $\Sigma_0$} and denote by $\mathcal V_0$ and $\mathcal H_0$, respectively.
\end{definition}

For each rectangle $Q_i$, choose a bijection $$\sigma_i: \{H^{(i)}_{k,j} \ | \ 1 \leq k \leq n, 1 \leq j \leq m_{ik}\} \circlearrowleft$$
and a bijection
$$\tau_i: \{V^{(i)}_{k,j} \ | \ 1 \leq k \leq n, 1 \leq j \leq m_{ki}\} \circlearrowleft.$$
Resize the strips so that $\tau_i^{-1}(V_{k,j}^{(i)})$ has width $\lambda^{-1}\omega_k$ and $\sigma_i(H^{(i)}_{k,j})$ has height $\lambda^{-1}\eta_k$. \\

Let $$V = \bigcup_{k,i,j} V^{(k)}_{i,j}$$ be the union  of the interiors of the vertical strips and let $$H = \bigcup_{k,i,j} H^{(k)}_{i,j}$$ be the union of the interiors of the horizontal strips. \\

Let $\sigma :H \rightarrow H$ be the piecewise linear homeomorphism mapping $H^{(i)}_{j,k}$ onto $\sigma_i(H^{(i)}_{j,k})$ preserving left/right/top/bottom. Similarly, let $\tau:V \rightarrow V$
be the piecewise linear homeomorphism mapping $V^{(i)}_{j,k}$ onto $\tau_i(V^{(i)}_{j,k})$ preserving left/right/top/bottom.\\

\begin{definition}
    The \textit{piece map corresponding to $M$, $\{\sigma_i\}_{i=1}^n$, and $\{\tau_i\}_{i=1}^n$} is the piecewise linear homeomorphism 
    $$f_0: V \rightarrow H$$ 
    mapping $\tau_k^{-1}(V^{(k)}_{i,j})$ bijectively onto $\sigma_i(H^{(i)}_{k,j})$, maintaining left/right and top/bottom orientation.  Note that $V^{(k)}_{i,j}$ is a vertical strip in $\Sigma_0$ exactly when $1 \leq i,k \leq n$ and $1 \leq j \leq m_{ik}$. Similarly $H^{(i)}_{k,j}$ is a horizontal strip under the exact same conditions on $i,j,k$. Thus $f_0$ is well-defined. 
    
    Equivalently, we can define $f_0:V \rightarrow H$ by $$f_0 =\sigma \circ w \circ \tau$$
     where $w:V \rightarrow H$ is the piecewise linear map homeomorphism mapping $V^{(i)}_{j,k}$ onto $H^{(j)}_{i,k}$ preserving  left/right/top/bottom orientation.
\end{definition} 

By our choice of widths and heights of the strips, each vertical strip is scaled under $f_0$ horizontally by a factor of $\lambda$ and vertically by a factor $\lambda^{-1}$. Moreover, $f_0$ preserves the laminations $\mathcal{V}_0$ and $\mathcal{H}_0$.

     For each rectangle $Q_i$, let $V^{(i)}_{\text{min}}$ be the vertical strip appearing on the leftmost side of $Q_i$. In other words, $\text{min}$ is equal to $(p,1)$ where $p$ is the minimal index in the $i^{th}$ column of $M$ which has a non-zero entry. Similarly, let $H^{(i)}_{\text{min}}$ be the horizontal strip appearing on the topmost side of $Q_i$.

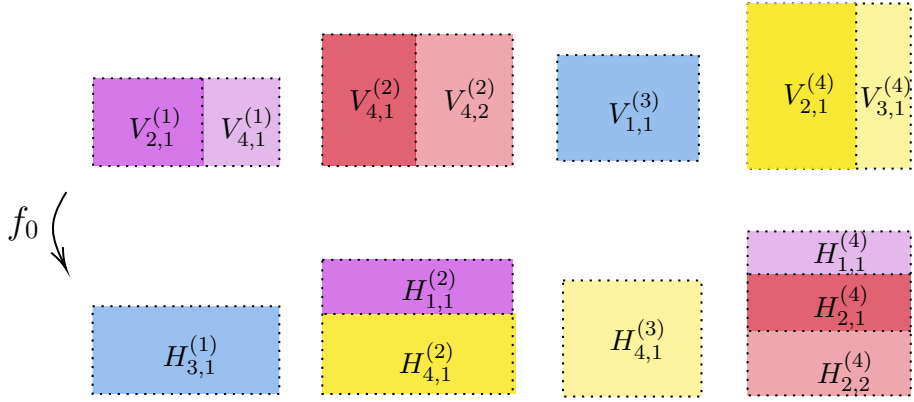
\begin{figure}[htbp]
\begin{center}

\tikzset{every picture/.style={line width=0.75pt}} 

\begin{tikzpicture}[x=0.75pt,y=0.75pt,yscale=-1,xscale=1]

\draw  [draw opacity=0][fill={rgb, 255:red, 207; green, 132; blue, 221 }  ,fill opacity=0.57 ][dash pattern={on 0.84pt off 2.51pt}] (107.2,58.77) -- (146.1,58.77) -- (146.1,103.06) -- (107.2,103.06) -- cycle ;
\draw  [fill={rgb, 255:red, 74; green, 144; blue, 226 }  ,fill opacity=0.58 ][dash pattern={on 0.84pt off 2.51pt}] (286.28,47.05) -- (357.7,47.05) -- (357.7,100.45) -- (286.28,100.45) -- cycle ;
\draw  [fill={rgb, 255:red, 248; green, 231; blue, 28 }  ,fill opacity=0.42 ][dash pattern={on 0.84pt off 2.51pt}] (382.27,21) -- (464.55,21) -- (464.55,104.36) -- (382.27,104.36) -- cycle ;
\draw  [draw opacity=0][fill={rgb, 255:red, 197; green, 69; blue, 223 }  ,fill opacity=0.72 ][dash pattern={on 0.84pt off 2.51pt}] (52.2,58.77) -- (107.74,58.77) -- (107.74,103.06) -- (52.2,103.06) -- cycle ;
\draw  [draw opacity=0][fill={rgb, 255:red, 207; green, 132; blue, 221 }  ,fill opacity=0.57 ][dash pattern={on 0.84pt off 2.51pt}] (382.27,136.04) -- (464.55,136.04) -- (464.55,157.66) -- (382.27,157.66) -- cycle ;
\draw  [fill={rgb, 255:red, 74; green, 144; blue, 226 }  ,fill opacity=0.57 ][dash pattern={on 0.84pt off 2.51pt}] (51.93,173.81) -- (146.1,173.81) -- (146.1,218.1) -- (51.93,218.1) -- cycle ;
\draw  [draw opacity=0][fill={rgb, 255:red, 248; green, 231; blue, 28 }  ,fill opacity=0.83 ][dash pattern={on 0.84pt off 2.51pt}] (167.77,177.72) -- (265.12,177.72) -- (265.12,218.1) -- (167.77,218.1) -- cycle ;
\draw  [draw opacity=0][fill={rgb, 255:red, 197; green, 69; blue, 223 }  ,fill opacity=0.72 ][dash pattern={on 0.84pt off 2.51pt}] (167.77,150.37) -- (263.8,150.37) -- (263.8,177.72) -- (167.77,177.72) -- cycle ;
\draw  [fill={rgb, 255:red, 248; green, 231; blue, 28 }  ,fill opacity=0.42 ][dash pattern={on 0.84pt off 2.51pt}] (289.2,160.79) -- (359.02,160.79) -- (359.02,219.4) -- (289.2,219.4) -- cycle ;
\draw    (38.71,116.18) .. controls (28.75,132.01) and (30.87,138.63) .. (37.92,153.59) ;
\draw [shift={(38.71,155.25)}, rotate = 244.6] [color={rgb, 255:red, 0; green, 0; blue, 0 }  ][line width=0.75]    (10.93,-3.29) .. controls (6.95,-1.4) and (3.31,-0.3) .. (0,0) .. controls (3.31,0.3) and (6.95,1.4) .. (10.93,3.29)   ;
\draw  [draw opacity=0][fill={rgb, 255:red, 248; green, 231; blue, 28 }  ,fill opacity=0.81 ][dash pattern={on 0.84pt off 2.51pt}] (382.27,21) -- (437.05,21) -- (437.05,104.36) -- (382.27,104.36) -- cycle ;
\draw  [dash pattern={on 0.84pt off 2.51pt}]  (437.05,21) -- (437.05,104.36) ;
\draw  [draw opacity=0][fill={rgb, 255:red, 208; green, 2; blue, 27 }  ,fill opacity=0.62 ][dash pattern={on 0.84pt off 2.51pt}] (382.27,157.66) -- (464.55,157.66) -- (464.55,186.32) -- (382.27,186.32) -- cycle ;
\draw  [draw opacity=0][fill={rgb, 255:red, 208; green, 2; blue, 27 }  ,fill opacity=0.35 ][dash pattern={on 0.84pt off 2.51pt}] (382.27,186.32) -- (464.55,186.32) -- (464.55,218.81) -- (382.27,218.81) -- cycle ;
\draw  [dash pattern={on 0.84pt off 2.51pt}] (382.27,136.04) -- (464.55,136.04) -- (464.55,218.81) -- (382.27,218.81) -- cycle ;
\draw  [dash pattern={on 0.84pt off 2.51pt}]  (464.55,157.66) -- (382.27,157.66) ;
\draw  [dash pattern={on 0.84pt off 2.51pt}]  (464.55,186.32) -- (382.27,186.32) ;
\draw  [dash pattern={on 0.84pt off 2.51pt}] (167.77,150.37) -- (263.8,150.37) -- (263.8,218.1) -- (167.77,218.1) -- cycle ;
\draw  [dash pattern={on 0.84pt off 2.51pt}]  (263.8,177.72) -- (167.77,177.72) ;
\draw  [draw opacity=0][fill={rgb, 255:red, 208; green, 2; blue, 27 }  ,fill opacity=0.35 ][dash pattern={on 0.84pt off 2.51pt}] (214.87,36.63) -- (263.8,36.63) -- (263.8,103.06) -- (214.87,103.06) -- cycle ;
\draw  [draw opacity=0][fill={rgb, 255:red, 208; green, 2; blue, 27 }  ,fill opacity=0.62 ][dash pattern={on 0.84pt off 2.51pt}] (167.77,36.63) -- (214.87,36.63) -- (214.87,103.06) -- (167.77,103.06) -- cycle ;
\draw  [dash pattern={on 0.84pt off 2.51pt}]  (214.87,36.63) -- (214.87,103.06) ;
\draw  [dash pattern={on 0.84pt off 2.51pt}] (52.2,58.77) -- (146.1,58.77) -- (146.1,103.06) -- (52.2,103.06) -- cycle ;
\draw  [dash pattern={on 0.84pt off 2.51pt}]  (107.2,58.77) -- (107.2,103.06) ;
\draw  [dash pattern={on 0.84pt off 2.51pt}] (167.77,36.63) -- (263.8,36.63) -- (263.8,103.06) -- (167.77,103.06) -- cycle ;

\draw (7.23,121.9) node [anchor=north west][inner sep=0.75pt]  [font=\large]  {$f_{0}$};
\draw (68.71,71.71) node [anchor=north west][inner sep=0.75pt]  [font=\small]  {$V_{2,1}^{( 1)}$};
\draw (114.99,71.71) node [anchor=north west][inner sep=0.75pt]  [font=\small]  {$V_{4,1}^{( 1)}$};
\draw (398.65,54.78) node [anchor=north west][inner sep=0.75pt]  [font=\small]  {$V_{2,1}^{( 4)}$};
\draw (204.92,190.26) node [anchor=north west][inner sep=0.75pt]  [font=\small]  {$H_{4,1}^{( 2)}$};
\draw (310.37,63.9) node [anchor=north west][inner sep=0.75pt]  [font=\small]  {$V_{1,1}^{( 3)}$};
\draw (206.25,153.49) node [anchor=north west][inner sep=0.75pt]  [font=\small]  {$H_{1,1}^{( 2)}$};
\draw (85.87,186.75) node [anchor=north west][inner sep=0.75pt]  [font=\small]  {$H_{3,1}^{( 1)}$};
\draw (311.37,178.94) node [anchor=north west][inner sep=0.75pt]  [font=\small]  {$H_{4,1}^{( 3)}$};
\draw (413.88,134.56) node [anchor=north west][inner sep=0.75pt]  [font=\small]  {$H_{1,1}^{( 4)}$};
\draw (437.33,56.08) node [anchor=north west][inner sep=0.75pt]  [font=\small]  {$V_{3,1}^{( 4)}$};
\draw (413.88,161.91) node [anchor=north west][inner sep=0.75pt]  [font=\small]  {$H_{2,1}^{( 4)}$};
\draw (415.86,195.22) node [anchor=north west][inner sep=0.75pt]  [font=\small]  {$H_{2,2}^{( 4)}$};
\draw (179.8,57.38) node [anchor=north west][inner sep=0.75pt]  [font=\small]  {$V_{4,1}^{( 2)}$};
\draw (227.41,57.38) node [anchor=north west][inner sep=0.75pt]  [font=\small]  {$V_{4,2}^{( 2)}$};

\end{tikzpicture}
\end{center}
\caption{\textbf{Running Example:} The piece map $f_0:V \rightarrow H$ corresponding to $M$ in Example \ref{example: run ex} and each $\sigma_k$ and $\tau_k$ equal to the identity map. Colors indicate the image of each region. }
    \label{fig:running ex f0}
\end{figure}

Observe that there is no continuous extension of $f_0$ to points in $\Sigma_0$ which lie both on the right edge of some vertical strip and on the left edge of a different vertical strip. Similarly, there is no continuous extension of $f_0^{-1}$ to points which lie both on the top edge of some horizontal strip and the bottom edge of a different horizontal strip. 

To manage the two different natural extensions of $f_0$ to these vertical edges in the interior of $\Sigma_0$, we will eventually identify certain segments of the left edges of the rectangles with certain segments of the right edges of the rectangles. Similarly, to manage the two different natural extensions of $f_0^{-1}$ to the horizontal edges in the interior of $\Sigma_0$, we will identify certain segments of the top edges of the rectangles with certain segments of the bottom edges of the rectangles.

If we made these identifications on $\Sigma_0$ now, there would be points on the boundary of $\Sigma_0$ where these edge identifications would accumulate. For this reason, we will wait to make these identifications until we have glued infinite strips onto $\Sigma_0$, which we will do in Section \ref{infinite strips}. 


\section{The Edge Maps}\label{edge maps}

Let \[L := \bigcup_{k,i,j} \text{Left edge}(V^{(k)}_{i,j}) \text{ and } R := \bigcup_{k,i,j} \text{Right edge}(V^{(k)}_{i,j})\]
be the \textit{left and right edges of the vertical strips}. Let
\[T := \bigcup_{k,i,j} \text{Top edge}(H^{(k)}_{i,j}) \text{ and }B := \bigcup_{k,i,j} \text{Bottom edge}(H^{(k)}_{i,j})\]
be the \textit{top and bottom edges of the horizontal strips}.

\begin{definition} (Edge maps of $f_0$)
   \begin{enumerate}
       \item The \textit{left edge map of $f_0$} $$f_{L}:L \rightarrow L$$ is given by
       $f_{L}(x) := \lim_{i \rightarrow \infty}f_0(x_i)$, where $x = \lim_{i \rightarrow \infty}x_i$ for $x_i \in V$ converging to $x$ from the right. By the definition of $f_0$, $f_L$ maps Left edge$(\tau_k^{-1}(V_{i,j}^{(k)}))$ bijectively onto Left edge$(\sigma_i(H_{k,j}^{(i)}))$.
       \item The \textit{right edge map of $f$} is $$f_{R}:R \rightarrow R$$ is given by
       $f_{R}(x) := \lim_{i \rightarrow \infty}f_0(x_i)$, where $x = \lim_{i \rightarrow \infty}x_i$ for $x_i \in V$ converging to $x$ from the left. By the definition of $f_0$, $f_R$ maps Right edge$(\tau_k^{-1}(V_{i,j}^{(k)}))$ bijectively onto Right edge$(\sigma_i(H_{k,j}^{(i)}))$
       \item The \textit{top edge map of $f_0$} is $$f_{T}^{-1}:T \rightarrow T$$ is given by
       $f_{T}^{-1}(x) := \lim_{i \rightarrow \infty}f_0^{-1}(x_i)$, where $x = \lim_{i \rightarrow \infty}x_i$ for $x_i \in H$ converging to $x$ from below. By the definition of $f_0$, $f_T^{-1}$ maps Top edge$(\sigma_k(H_{i,j}^{(k)}))$ bijectively onto Top edge$(\tau_i^{-1}(V_{k,j}^{(i)}))$.
       \item The \textit{bottom edge map of $f_0$} is $$f_{B}^{-1}:B \rightarrow B$$ is given by
       $f_{B}^{-1}(x) := \lim_{i \rightarrow \infty}f_0^{-1}(x_i)$, where $x = \lim_{i \rightarrow \infty}x_i$ for $x_i \in H$ converging to $x$ from above. By the definition of $f_0$, $f_B^{-1}$ maps  Bottom edge$(\sigma_k(H_{i,j}^{(k)}))$ bijectively onto Bottom edge$(\tau_i^{-1}((V_{k,j}^{(i)}))$.
   \end{enumerate}
\end{definition}

Let $X := L \cap R$ and $Y := T \cap B$. These are the unions of the vertical and horizontal edges which do not lie on the boundaries of the $Q_i$. Observe that both $f_L$ and $f_R$ are defined on $X$ and both $f_T^{-1}$ and $f_B^{-1}$ are defined on $Y$. The discrepancy between these maps is precisely what prevents us from continuously extending $f_0$ and $f_0^{-1}$. To alleviate this, we will eventually need to identify certain points on the boundary.

\begin{definition}
    We define four directed graphs, each with vertex set $\{v_1, \dots, v_n\}$. \begin{itemize}
    \item[1.] The \textit{left edge digraph} $D_L$ has directed edges
  $$E^+(D_L) = \{(v_i,v_j) | \ f_L(\text{Left edge}(Q_i)) \subseteq \text{ Left edge}(Q_j)\}.$$
    \item[2.] The \textit{right edge digraph} $D_R$ has directed edges $$E^+(D_R) = \{(v_i,v_j) | \ f_L(\text{Right edge}(Q_i)) \subseteq \text{Right edge}(Q_j)\}$$
    \item[3.] The \textit{top edge digraph} $D_T$ has directed edges
    $$E^+(D_T) = \{(v_i,v_j) | \ f_T^{-1}(\text{Top edge}(Q_i)) \subseteq \text{Top edge}(Q_j)\}$$
    \item[4.] The \textit{bottom edge digraph} $D_B$ has directed edges
    $$E^+(D_B) = \{(v_i,v_j) | \ f_B^{-1}(\text{Bottom edge}(Q_i)) \subseteq \text{Bottom edge}(Q_j)\}.$$
\end{itemize}
\end{definition}

By construction, $D_L$ and $D_R$  (resp. $D_T$ and $D_B$) are subgraphs of the directed graph corresponding to $M$ (resp. $M^T$). 


\begin{figure}[htbp]
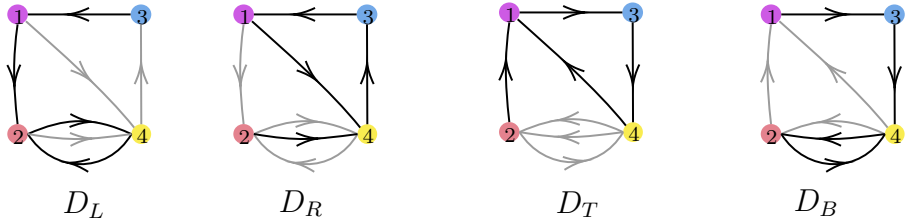

\begin{center}

\tikzset{every picture/.style={line width=0.75pt}} 


\end{center}
\caption{\textbf{Running Example:} The four edge digraphs corresponding to the map $f_0$ shown in \Cref{fig:running ex f0}. The black directed edges are in the edge digraphs, while the light gray edges are the remaining edges in the digraphs corresponding to $M$ for $D_L$ and $D_R$ and corresponding to $M^T$ for $D_T$ and $D_B$. }
    \label{fig:running ex digraphs}
\end{figure}

\begin{lemma}
    The left (resp. right, top, bottom) edge of $Q_i$ contains a single periodic point under $f_L$ (resp. $f_R$, $f_T^{-1}$, $f_B^{-1}$) if and only if $v_i$ is a periodic vertex in $D_L$ (resp. $D_R$, $D_T$, $D_R$). Otherwise, the edge has no periodic points. Thus, there are finitely many points in the boundary of $\Sigma_0$ which are periodic under $f_L$, $f_R$, $f_T^{-1}$, or $f_B^{-1}$. 
\end{lemma}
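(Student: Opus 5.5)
We treat the left edge map $f_L$ in detail; the other three cases follow by the symmetric argument, with $f_R$, $f_T^{-1}$, $f_B^{-1}$ and the corresponding digraphs $D_R$, $D_T$, $D_B$. The approach is to combine the contraction property of $f_L$ with the combinatorics of the digraph $D_L$.

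First we record the structure of $f_L$ on the boundary of the rectangles. Let $\ell_i$ denote the left edge of $Q_i$. It coincides with the left edge of the leftmost vertical strip $V^{(i)}_{\min}$, which is $\tau_i^{-1}(V^{(i)}_{p,j})$ for a unique $(p,j)$. By the definition of $f_0$, $f_L$ maps $\ell_i$ affinely and orientation-preservingly onto the left edge of the horizontal strip $\sigma_p(H^{(p)}_{i,j})$. This left edge is a subsegment of $\ell_p$ of length $\lambda^{-1}\eta_i$. Consequently $f_L(\ell_i)\subseteq \ell_p$, so $v_i$ has exactly one outgoing edge in $D_L$, namely $(v_i,v_p)$. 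In particular $D_L$ is a functional graph: every vertex has out-degree one.

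Second, let $x\in \ell_i$ be periodic with $f_L^N(x)=x$. Then $f_L^k(x)\in \ell_{i_k}$, where $v_{i_0}=v_i\to v_{i_1}\to\cdots$ is the unique forward path in $D_L$. Since $f_L^N(x)\in\ell_i$, we get $i_N=i$, so $v_i$ is periodic in $D_L$. Conversely, suppose $v_i$ lies on a cycle of length $N$. Then $f_L^N$ restricts to an affine map $\ell_i\to\ell_i$ scaling lengths by $\lambda^{-N}<1$. By the contraction mapping principle on the compact segment $\ell_i$, it has exactly one fixed point. Any periodic point of $f_L$ in $\ell_i$ must have period a multiple of $N$, since otherwise the path would not return. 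Any such point is also fixed by a power of the contraction $f_L^N$, and is therefore the same unique fixed point. This gives exactly one periodic point. Here we need $\lambda>1$. If $\lambda=1$, the irreducible integer matrix $M$ is a permutation matrix; that degenerate case should be excluded or treated separately, and I would note that the paper implicitly assumes $\lambda>1$.

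Finally, there are only $n$ rectangles, each with four edges, so there are finitely many periodic points. The main subtlety is checking that the image of $\ell_i$ really lands inside a single $\ell_p$ and not elsewhere in $L$. This holds because a horizontal strip spans the full width of its rectangle, so its left edge lies on $\ell_p$. The same holds for top and bottom edges under $f_T^{-1}$ and $f_B^{-1}$, because vertical strips span the full height of their rectangle.
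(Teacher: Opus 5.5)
Your proof is correct and follows essentially the same route as the paper. Each left edge $\ell_i$ is mapped affinely into a single $\ell_p$, so $v_i$ is periodic in $D_L$ exactly when some $f_L^N$ maps $\ell_i$ into itself, and the contraction mapping principle then gives a unique periodic point. Your observation that the contraction actually comes from $\lambda>1$ (which fails only when $M$ is a permutation matrix), rather than from irreducibility alone as the paper phrases it, is a correct sharpening of the paper's argument.
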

\begin{proof} Observe that a vertex $v_i$ has period $p$ in the digraph $D_L$ if and only if $$f_L^p(\text{Left edge}(Q_i)) \subseteq  \text{Left edge}(Q_i).$$ Moreover, irreducibility of the matrix $M$ guarantees that the map $$f^p_L : \text{Left edge}(Q_i) \rightarrow \text{Left edge}(Q_i)$$ is a contraction. By the Banach fixed-point theorem, the contraction has exactly one fixed point.

Now suppose $x \in L$ is a period $p$ point of $f_L$, so $f_L^p(x)=x$. Since $f_L(L)$ is contained in the left edges of the $Q_i$, we must have $x \in \text{Left edge}(Q_i)$ for some $i$. Since the image of each left edge is contained in a single other left edge, we must have $f_L^p(\text{Left edge}(Q_i)) \subseteq  \text{Left edge}(Q_i).$ Thus $v_i$ is a periodic vertex in $D_L$ and $$f^p_L : \text{Left edge}(Q_i) \rightarrow \text{Left edge}(Q_i)$$ is a contraction. Thus $x$ must be the unique fixed point.

Hence the left edge of each rectangle $Q_i$ can have at most one periodic point, and thus there are at most $n$ periodic points of $f_L$. In other words, \[\bigcap_{i=0}^{\infty} f^i_L(L)\]
consists of at most $n$ points. An analogous argument holds for $f_R,f^{-1}_T,$ and $f^{-1}_B$.
\end{proof}

Observe that if a periodic point of $f_L$ (resp. $f_{R}$) is a top left or bottom left (resp. top right or bottom right) corner of a rectangle $Q_i$, then it is necessarily also a periodic point of $f^{-1}_T$ or $f^{-1}_B$ (of the same period). Conversely, a periodic point of $f^{-1}_T$ or $f^{-1}_B$ which is a corner point of a rectangle $Q_i$ is also a periodic point of $f_{L}$ or $f_R$. 

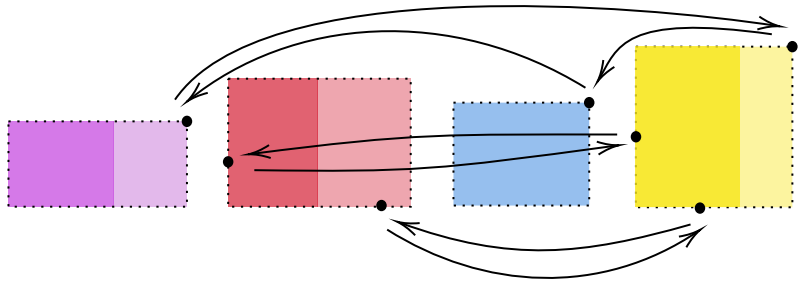
\begin{figure}[htbp]
\begin{center}

\tikzset{every picture/.style={line width=0.75pt}} 

\begin{tikzpicture}[x=0.75pt,y=0.75pt,yscale=-1,xscale=1]

\draw  [draw opacity=0][fill={rgb, 255:red, 207; green, 132; blue, 221 }  ,fill opacity=0.57 ][dash pattern={on 0.84pt off 2.51pt}] (63.93,60.63) -- (101.22,60.63) -- (101.22,103.71) -- (63.93,103.71) -- cycle ;
\draw  [fill={rgb, 255:red, 74; green, 144; blue, 226 }  ,fill opacity=0.58 ][dash pattern={on 0.84pt off 2.51pt}] (235.63,51.18) -- (304.1,51.18) -- (304.1,103.12) -- (235.63,103.12) -- cycle ;
\draw  [fill={rgb, 255:red, 248; green, 231; blue, 28 }  ,fill opacity=0.42 ][dash pattern={on 0.84pt off 2.51pt}] (327.66,22.92) -- (406.56,22.92) -- (406.56,104) -- (327.66,104) -- cycle ;
\draw  [draw opacity=0][fill={rgb, 255:red, 197; green, 69; blue, 223 }  ,fill opacity=0.72 ][dash pattern={on 0.84pt off 2.51pt}] (11.2,60.63) -- (64.45,60.63) -- (64.45,103.71) -- (11.2,103.71) -- cycle ;
\draw  [draw opacity=0][fill={rgb, 255:red, 248; green, 231; blue, 28 }  ,fill opacity=0.81 ][dash pattern={on 0.84pt off 2.51pt}] (327.66,22.92) -- (380.18,22.92) -- (380.18,104) -- (327.66,104) -- cycle ;
\draw  [draw opacity=0][fill={rgb, 255:red, 208; green, 2; blue, 27 }  ,fill opacity=0.35 ][dash pattern={on 0.84pt off 2.51pt}] (167.16,39.09) -- (214.08,39.09) -- (214.08,103.71) -- (167.16,103.71) -- cycle ;
\draw  [draw opacity=0][fill={rgb, 255:red, 208; green, 2; blue, 27 }  ,fill opacity=0.62 ][dash pattern={on 0.84pt off 2.51pt}] (122.01,39.09) -- (167.16,39.09) -- (167.16,103.71) -- (122.01,103.71) -- cycle ;
\draw  [dash pattern={on 0.84pt off 2.51pt}] (11.2,60.63) -- (101.22,60.63) -- (101.22,103.71) -- (11.2,103.71) -- cycle ;
\draw  [dash pattern={on 0.84pt off 2.51pt}] (122.01,39.09) -- (214.08,39.09) -- (214.08,103.71) -- (122.01,103.71) -- cycle ;
\draw  [draw opacity=0][fill={rgb, 255:red, 0; green, 0; blue, 0 }  ,fill opacity=1 ] (98.58,60.63) .. controls (98.58,59.05) and (99.77,57.76) .. (101.22,57.76) .. controls (102.68,57.76) and (103.87,59.05) .. (103.87,60.63) .. controls (103.87,62.22) and (102.68,63.5) .. (101.22,63.5) .. controls (99.77,63.5) and (98.58,62.22) .. (98.58,60.63) -- cycle ;
\draw  [draw opacity=0][fill={rgb, 255:red, 0; green, 0; blue, 0 }  ,fill opacity=1 ] (119.36,81.06) .. controls (119.36,79.47) and (120.54,78.19) .. (122,78.19) .. controls (123.46,78.19) and (124.64,79.47) .. (124.64,81.06) .. controls (124.64,82.64) and (123.46,83.92) .. (122,83.92) .. controls (120.54,83.92) and (119.36,82.64) .. (119.36,81.06) -- cycle ;
\draw  [draw opacity=0][fill={rgb, 255:red, 0; green, 0; blue, 0 }  ,fill opacity=1 ] (196.7,103.1) .. controls (196.7,101.52) and (197.88,100.24) .. (199.34,100.24) .. controls (200.8,100.24) and (201.98,101.52) .. (201.98,103.1) .. controls (201.98,104.69) and (200.8,105.97) .. (199.34,105.97) .. controls (197.88,105.97) and (196.7,104.69) .. (196.7,103.1) -- cycle ;
\draw  [draw opacity=0][fill={rgb, 255:red, 0; green, 0; blue, 0 }  ,fill opacity=1 ] (301.46,51.18) .. controls (301.46,49.59) and (302.64,48.31) .. (304.1,48.31) .. controls (305.56,48.31) and (306.75,49.59) .. (306.75,51.18) .. controls (306.75,52.76) and (305.56,54.04) .. (304.1,54.04) .. controls (302.64,54.04) and (301.46,52.76) .. (301.46,51.18) -- cycle ;
\draw  [draw opacity=0][fill={rgb, 255:red, 0; green, 0; blue, 0 }  ,fill opacity=1 ] (403.91,22.92) .. controls (403.91,21.34) and (405.1,20.05) .. (406.56,20.05) .. controls (408.02,20.05) and (409.2,21.34) .. (409.2,22.92) .. controls (409.2,24.5) and (408.02,25.79) .. (406.56,25.79) .. controls (405.1,25.79) and (403.91,24.5) .. (403.91,22.92) -- cycle ;
\draw  [draw opacity=0][fill={rgb, 255:red, 0; green, 0; blue, 0 }  ,fill opacity=1 ] (357.32,104.4) .. controls (357.32,102.82) and (358.5,101.53) .. (359.96,101.53) .. controls (361.42,101.53) and (362.6,102.82) .. (362.6,104.4) .. controls (362.6,105.99) and (361.42,107.27) .. (359.96,107.27) .. controls (358.5,107.27) and (357.32,105.99) .. (357.32,104.4) -- cycle ;
\draw  [draw opacity=0][fill={rgb, 255:red, 0; green, 0; blue, 0 }  ,fill opacity=1 ] (325.04,68.41) .. controls (325.04,66.83) and (326.22,65.55) .. (327.68,65.55) .. controls (329.14,65.55) and (330.33,66.83) .. (330.33,68.41) .. controls (330.33,70) and (329.14,71.28) .. (327.68,71.28) .. controls (326.22,71.28) and (325.04,70) .. (325.04,68.41) -- cycle ;
\draw    (302.2,43.67) .. controls (234.54,1.88) and (154.01,7.61) .. (101.98,50.03) ;
\draw [shift={(101.2,50.67)}, rotate = 320.41] [color={rgb, 255:red, 0; green, 0; blue, 0 }  ][line width=0.75]    (10.93,-3.29) .. controls (6.95,-1.4) and (3.31,-0.3) .. (0,0) .. controls (3.31,0.3) and (6.95,1.4) .. (10.93,3.29)   ;
\draw    (95.2,49.67) .. controls (138.98,-14.01) and (327.3,1.51) .. (399.13,12.5) ;
\draw [shift={(400.2,12.67)}, rotate = 188.81] [color={rgb, 255:red, 0; green, 0; blue, 0 }  ][line width=0.75]    (10.93,-3.29) .. controls (6.95,-1.4) and (3.31,-0.3) .. (0,0) .. controls (3.31,0.3) and (6.95,1.4) .. (10.93,3.29)   ;
\draw    (396.2,16.67) .. controls (320.93,6.05) and (318.3,23.38) .. (309.22,38.98) ;
\draw [shift={(308.2,40.67)}, rotate = 302.01] [color={rgb, 255:red, 0; green, 0; blue, 0 }  ][line width=0.75]    (10.93,-3.29) .. controls (6.95,-1.4) and (3.31,-0.3) .. (0,0) .. controls (3.31,0.3) and (6.95,1.4) .. (10.93,3.29)   ;
\draw    (355.2,112.67) .. controls (300.75,128.11) and (264.92,132.19) .. (208.9,111.89) ;
\draw [shift={(207.2,111.27)}, rotate = 20.22] [color={rgb, 255:red, 0; green, 0; blue, 0 }  ][line width=0.75]    (10.93,-3.29) .. controls (6.95,-1.4) and (3.31,-0.3) .. (0,0) .. controls (3.31,0.3) and (6.95,1.4) .. (10.93,3.29)   ;
\draw    (318.2,67.27) .. controls (231.63,67.27) and (217.34,66.28) .. (133.47,77.1) ;
\draw [shift={(132.2,77.27)}, rotate = 352.63] [color={rgb, 255:red, 0; green, 0; blue, 0 }  ][line width=0.75]    (10.93,-3.29) .. controls (6.95,-1.4) and (3.31,-0.3) .. (0,0) .. controls (3.31,0.3) and (6.95,1.4) .. (10.93,3.29)   ;
\draw    (135.2,85.27) .. controls (216.79,86.26) and (226.11,85.28) .. (317.81,72.86) ;
\draw [shift={(319.2,72.67)}, rotate = 172.28] [color={rgb, 255:red, 0; green, 0; blue, 0 }  ][line width=0.75]    (10.93,-3.29) .. controls (6.95,-1.4) and (3.31,-0.3) .. (0,0) .. controls (3.31,0.3) and (6.95,1.4) .. (10.93,3.29)   ;
\draw    (202.2,115.27) .. controls (250.71,145.96) and (310,149.21) .. (358.73,116.28) ;
\draw [shift={(360.2,115.27)}, rotate = 145.24] [color={rgb, 255:red, 0; green, 0; blue, 0 }  ][line width=0.75]    (10.93,-3.29) .. controls (6.95,-1.4) and (3.31,-0.3) .. (0,0) .. controls (3.31,0.3) and (6.95,1.4) .. (10.93,3.29)   ;

\end{tikzpicture}
\end{center}
\caption{\textbf{Running Example:} The periodic points of the edge maps corresponding to $f_0$ in \Cref{fig:running ex f0}. Arrows indicate the orbits of the periodic points under the corresponding edge map.}
\label{fig:running ex digraphs}
\end{figure}




\section{Gluing Infinite Strips}\label{infinite strips}

In this section we will tackle the following to-do list:
\begin{enumerate}
    \item Glue infinite strips onto $\Sigma_0$ to form $\Sigma_1$.
    \item Partition $\Sigma_1$ in two ways: $V_1$ and $H_1$.
    \item Build a homeomorphism $f_1:V_1 \rightarrow H_1$ which is equal to $f_0$ far enough away from the periodic points of the edge maps of $f_0$.
    \item Show that the edge maps of $f_1$ have no periodic points. 
    \item Build a 2-complex $\Sigma_2$ as a quotient of $\Sigma_1$ by identifying points which are the image of the same point under different edge maps.
    \item Modify $\Sigma_2$ to build a surface $\Sigma$, called the surface corresponding to $M$.
\end{enumerate}

We begin by introducing some notation to help us keep track of the information needed to define the map $f_1$. 

For each finite orbit $C$ in each of the four edge maps of $f_0$, choose a particular periodic point of $C$, called \textit{the initial point of $C$}. Choose initial points so that if $x$ is a periodic point under two edge maps (i.e. $x$ is a periodic corner of a rectangle $Q_i$) and $x$ is the initial point of an orbit $C$ in one edge map, then $x$ is also the initial point of the orbit $C'$ containing $x$ in the second edge map. 

The map $f_1$ will act by a shift in each of the infinite strips added to $\Sigma_0$ to form $\Sigma_1$, but we will only shift once per cycle at the initial point of each orbit. 

For each periodic point $x$ of $f_L$, let the \textit{left infinite strip corresponding to $x$} be
    $$E_L(x) = [0, 1] \times [0, \infty).$$ 
    Similarly define $E_R(x)$, $E_T(x)$ and $E_B(x)$. Let $(z,w)_{L,x}$ denote the point $(z,w) \in E_L(x)$, and similarly for the other infinite strips. 

    Define the \textit{expanded rectangles corresponding to $f_0$} to be 
        \[\Sigma_1 := \Sigma_0  \ \sqcup_{g} \ \left( \Big{(}\bigcup_x E_L(x)\Big{)} \cup \Big{(}\bigcup_x E_R(x)\Big{)} \cup \Big{(}\bigcup_x E_T(x)\Big{)} \cup \Big{(}\bigcup_x E_B(x)\Big{)}\right)\]
    where each union is taken over the periodic points of the corresponding edge map and the infinite strips are glued onto $\Sigma_0$ via a piecewise linear map $g$, defined as follows: the domain of $g$ is the union of $[0,1] \times \{0\}$ for each infinite strip and its range is the union of certain neighborhoods of the periodic points. 

    Let $x_0,x_1, \dots,x_{p-1}$ be a finite orbit of periodic points under $f_L$, with $x_0 \in$ Left edge$(Q_i)$ an initial periodic point. For each $0 \leq j \leq p-1$, the map $g$ on $([0,1] \times \{0\})_{L,x_j}$, is the linear map with image in $L$ such that \[g(([0,1] \times \{0\})_{L,x_j}) = f_L^{2p+j}(\text{Left edge}(Q_i))\] and \[g((0,0)_{L,x_j}) = f_L^{2p+j}(\text{Bottom left corner}(Q_i)).\]


    Similarly, let $x_0,x_1,\dots,x_{p-1}$ be a finite orbit of periodic points under $f_T^{-1}$ with $x_0 \in$ Top edge$(Q_i)$ an initial periodic point. For each $0 \leq j \leq p-1$, the map $g$ on $([0,1] \times \{0\})_{T,x_j}$, is the linear map with image in $T$ such that \[g(([0,1] \times \{0\})_{T,x_j}) = f_T^{-2p-j}(\text{Top edge}(Q_i))\] and \[g((0,0)_{T,x_j}) = f_T^{-2p-j}(\text{Top left corner}(Q_i)).\]

    We can define $g$ in the same way for periodic points of $f_R$ and $f_B^{-1}$.

\begin{figure}[htbp]
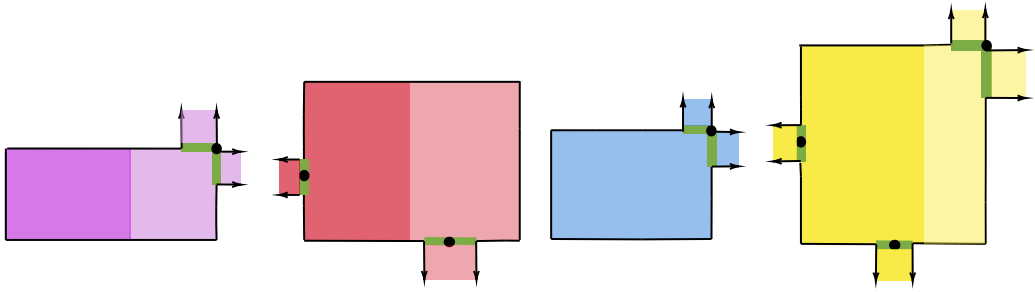

\begin{center}

\tikzset{every picture/.style={line width=0.75pt}} 


\end{center}
\caption{\textbf{Running Example:} The expanded rectangles corresponding to $f_0$ in \Cref{fig:running ex f0}. The green regions indicate where the infinite strips are glued on via the map $g$.}
\label{fig:}
\end{figure}


    For each $V_{i,j}^{(k)}$, let $(V_{i,j}^{(k)})'$ be the subset of $\Sigma_1$ consisting of the interior of the union of $\overline{(V_{i,j}^{(k)})}$ and all infinite strips which have been glued to the boundary of $V_{i,j}^{(k)}$. Define each $(H_{i,j}^{(k)})'$ similarly, and let 
    $$V_1 = \bigcup (V_{i,j}^{(k)})'$$
    and
    $$H_1 = \bigcup (H_{i,j}^{(k)})'.$$

We will construct a homeomorphism $f_1 : V_1 \to H_1$ that agrees with $f_0$ outside a neighborhood of the infinite strips and equals a translation sufficiently far into each infinite strip. We begin by specifying the regions where the behavior of $f_1$ transitions from that of $f_0$ to that of a shift.


\begin{definition}[Switch regions]
Let $x$ be a periodic point of an edge map of $f_0$. 
The \emph{switch region corresponding to $x$}, denoted $P(x)$, 
will be the region of $\Sigma_1$ where the map $f_1$ switches  
from equaling $f_0$ to equaling translation along the infinite strips. We define this region first for the non-corner periodic points of $f_L$ and $f_R$, then for the non-corner periodic points of $f_T^{-1}$ and $f_B^{-1}$. Finally, we define this region for the corner periodic points of the edge maps. 

\medskip

\begin{itemize}
 
\item[(i)] \textbf{Non-corner periodic points of Left and Right edge Maps.}
Let $x_0, \dots, x_{p-1}$ be the orbit of a non-corner periodic point of $f_L$ 
with period $p$, labeled so that $x_0$ is the initial periodic point.
Let $Q_i$ be the rectangle containing $x_0$. Let $W$ be a path joining 
$f_L^p(\mathrm{Top\ left}(Q_i))$ to 
$f_L^p(\mathrm{Bottom\ left}(Q_i))$ 
such that 
\[
f_0^j(\mathrm{int}(W)) \subseteq V_0 \cap H_0 
\quad \text{for } 0 \le j \le p.
\]
Let $I \subset \mathrm{Left\ edge}(Q_i)$ be the subsegment with the same endpoints.
For $0 \le j \le p-1$, define the switch region corresponding to $x_j$, denoted $P(x_j)$, to be the region of $\Sigma_1$ bounded by $f_0^j(W) \cup f_L^j(I)$
(see Figure ~\ref{fig: switch maps left}).

 \begin{figure}[htbp]
  \begin{center}

\tikzset{every picture/.style={line width=0.75pt}} 

\begin{tikzpicture}[x=0.75pt,y=0.75pt,yscale=-1,xscale=1]

\draw  [draw opacity=0][fill={rgb, 255:red, 194; green, 239; blue, 229 }  ,fill opacity=0.48 ] (82.64,49.27) -- (176,49.27) -- (176,134.71) -- (82.64,134.71) -- cycle ;
\draw    (82.64,49.27) -- (82.64,133.02) ;
\draw    (82.64,49.27) -- (176,49.27) ;
\draw    (82.64,134.71) -- (175.63,134.71) ;
\draw  [draw opacity=0][fill={rgb, 255:red, 128; green, 128; blue, 128 }  ,fill opacity=1 ] (79.91,49.27) .. controls (79.91,47.93) and (81.13,46.85) .. (82.64,46.85) .. controls (84.15,46.85) and (85.37,47.93) .. (85.37,49.27) .. controls (85.37,50.61) and (84.15,51.7) .. (82.64,51.7) .. controls (81.13,51.7) and (79.91,50.61) .. (79.91,49.27) -- cycle ;
\draw  [draw opacity=0][fill={rgb, 255:red, 128; green, 128; blue, 128 }  ,fill opacity=1 ] (79.91,133.02) .. controls (79.91,131.68) and (81.13,130.6) .. (82.64,130.6) .. controls (84.15,130.6) and (85.37,131.68) .. (85.37,133.02) .. controls (85.37,134.36) and (84.15,135.45) .. (82.64,135.45) .. controls (81.13,135.45) and (79.91,134.36) .. (79.91,133.02) -- cycle ;
\draw    (8,73.07) -- (83.01,73.07) ;
\draw [shift={(6,73.07)}, rotate = 0] [color={rgb, 255:red, 0; green, 0; blue, 0 }  ][line width=0.75]    (10.93,-3.29) .. controls (6.95,-1.4) and (3.31,-0.3) .. (0,0) .. controls (3.31,0.3) and (6.95,1.4) .. (10.93,3.29)   ;
\draw    (8,99.07) -- (82.64,99.18) ;
\draw [shift={(6,99.07)}, rotate = 0.08] [color={rgb, 255:red, 0; green, 0; blue, 0 }  ][line width=0.75]    (10.93,-3.29) .. controls (6.95,-1.4) and (3.31,-0.3) .. (0,0) .. controls (3.31,0.3) and (6.95,1.4) .. (10.93,3.29)   ;
\draw  [draw opacity=0][fill={rgb, 255:red, 194; green, 239; blue, 229 }  ,fill opacity=0.48 ] (6,72.96) -- (83.01,72.96) -- (83.01,99.07) -- (6,99.07) -- cycle ;
\draw    (82.21,59.2) .. controls (129.72,63.45) and (135.4,111.67) .. (82.27,119.26) ;
\draw  [dash pattern={on 0.84pt off 2.51pt}]  (159.1,50.01) -- (159.1,134.6) ;
\draw  [draw opacity=0][fill={rgb, 255:red, 189; green, 16; blue, 224 }  ,fill opacity=0.2 ] (82.27,119.26) -- (82.21,59.2) -- (100.72,63.54) -- (116.22,73.69) -- (120.36,90.61) -- (115.19,103.3) -- (99.69,115.15) -- cycle ;
\draw  [draw opacity=0][fill={rgb, 255:red, 128; green, 128; blue, 128 }  ,fill opacity=1 ] (80.3,60.16) .. controls (80.3,58.82) and (81.52,57.73) .. (83.03,57.73) .. controls (84.54,57.73) and (85.76,58.82) .. (85.76,60.16) .. controls (85.76,61.5) and (84.54,62.58) .. (83.03,62.58) .. controls (81.52,62.58) and (80.3,61.5) .. (80.3,60.16) -- cycle ;
\draw  [draw opacity=0][fill={rgb, 255:red, 128; green, 128; blue, 128 }  ,fill opacity=1 ] (80.21,119.26) .. controls (80.21,117.92) and (81.43,116.84) .. (82.94,116.84) .. controls (84.45,116.84) and (85.67,117.92) .. (85.67,119.26) .. controls (85.67,120.6) and (84.45,121.69) .. (82.94,121.69) .. controls (81.43,121.69) and (80.21,120.6) .. (80.21,119.26) -- cycle ;
\draw  [draw opacity=0][fill={rgb, 255:red, 0; green, 0; blue, 0 }  ,fill opacity=1 ] (79.51,86.81) .. controls (79.51,85.47) and (80.73,84.38) .. (82.24,84.38) .. controls (83.75,84.38) and (84.97,85.47) .. (84.97,86.81) .. controls (84.97,88.14) and (83.75,89.23) .. (82.24,89.23) .. controls (80.73,89.23) and (79.51,88.14) .. (79.51,86.81) -- cycle ;
\draw  [draw opacity=0][fill={rgb, 255:red, 155; green, 155; blue, 155 }  ,fill opacity=0.23 ] (303.67,49.27) -- (388,49.27) -- (388,134.71) -- (303.67,134.71) -- cycle ;
\draw    (303.67,49.27) -- (303.67,133.02) ;
\draw    (303.67,49.27) -- (388,49.27) ;
\draw    (303.67,134.71) -- (388,134.71) ;
\draw  [draw opacity=0][fill={rgb, 255:red, 128; green, 128; blue, 128 }  ,fill opacity=1 ] (300.94,49.27) .. controls (300.94,47.93) and (302.16,46.85) .. (303.67,46.85) .. controls (305.17,46.85) and (306.4,47.93) .. (306.4,49.27) .. controls (306.4,50.61) and (305.17,51.7) .. (303.67,51.7) .. controls (302.16,51.7) and (300.94,50.61) .. (300.94,49.27) -- cycle ;
\draw  [draw opacity=0][fill={rgb, 255:red, 128; green, 128; blue, 128 }  ,fill opacity=1 ] (300.94,133.02) .. controls (300.94,131.68) and (302.16,130.6) .. (303.67,130.6) .. controls (305.17,130.6) and (306.4,131.68) .. (306.4,133.02) .. controls (306.4,134.36) and (305.17,135.45) .. (303.67,135.45) .. controls (302.16,135.45) and (300.94,134.36) .. (300.94,133.02) -- cycle ;
\draw    (242,73.07) -- (303.3,73.07) ;
\draw [shift={(240,73.07)}, rotate = 0] [color={rgb, 255:red, 0; green, 0; blue, 0 }  ][line width=0.75]    (10.93,-3.29) .. controls (6.95,-1.4) and (3.31,-0.3) .. (0,0) .. controls (3.31,0.3) and (6.95,1.4) .. (10.93,3.29)   ;
\draw    (242,99.18) -- (303.67,99.18) ;
\draw [shift={(240,99.18)}, rotate = 0] [color={rgb, 255:red, 0; green, 0; blue, 0 }  ][line width=0.75]    (10.93,-3.29) .. controls (6.95,-1.4) and (3.31,-0.3) .. (0,0) .. controls (3.31,0.3) and (6.95,1.4) .. (10.93,3.29)   ;
\draw  [draw opacity=0][fill={rgb, 255:red, 155; green, 155; blue, 155 }  ,fill opacity=0.23 ] (240,73.07) -- (304.04,73.07) -- (304.04,99.18) -- (240,99.18) -- cycle ;
\draw    (303.23,59.2) .. controls (342.53,67.54) and (338.91,108.15) .. (303.3,119.26) ;
\draw  [dash pattern={on 0.84pt off 2.51pt}]  (380.12,50.01) -- (380.12,134.6) ;
\draw  [draw opacity=0][fill={rgb, 255:red, 189; green, 16; blue, 224 }  ,fill opacity=0.2 ] (303.3,119.26) -- (303.23,59.2) -- (320.57,66.08) -- (329.87,77.92) -- (330.9,86.38) -- (328.84,99.92) -- (317.47,111.76) -- cycle ;
\draw  [draw opacity=0][fill={rgb, 255:red, 128; green, 128; blue, 128 }  ,fill opacity=1 ] (301.32,60.16) .. controls (301.32,58.82) and (302.55,57.73) .. (304.05,57.73) .. controls (305.56,57.73) and (306.79,58.82) .. (306.79,60.16) .. controls (306.79,61.5) and (305.56,62.58) .. (304.05,62.58) .. controls (302.55,62.58) and (301.32,61.5) .. (301.32,60.16) -- cycle ;
\draw  [draw opacity=0][fill={rgb, 255:red, 128; green, 128; blue, 128 }  ,fill opacity=1 ] (301.23,119.26) .. controls (301.23,117.92) and (302.45,116.84) .. (303.96,116.84) .. controls (305.47,116.84) and (306.69,117.92) .. (306.69,119.26) .. controls (306.69,120.6) and (305.47,121.69) .. (303.96,121.69) .. controls (302.45,121.69) and (301.23,120.6) .. (301.23,119.26) -- cycle ;
\draw  [draw opacity=0][fill={rgb, 255:red, 0; green, 0; blue, 0 }  ,fill opacity=1 ] (300.53,86.81) .. controls (300.53,85.47) and (301.76,84.38) .. (303.27,84.38) .. controls (304.77,84.38) and (306,85.47) .. (306,86.81) .. controls (306,88.14) and (304.77,89.23) .. (303.27,89.23) .. controls (301.76,89.23) and (300.53,88.14) .. (300.53,86.81) -- cycle ;
\draw  [draw opacity=0][fill={rgb, 255:red, 194; green, 239; blue, 229 }  ,fill opacity=0.48 ] (521.64,50.47) -- (615,50.47) -- (615,135.91) -- (521.64,135.91) -- cycle ;
\draw    (521.64,50.47) -- (521.64,134.22) ;
\draw    (521.64,50.47) -- (615,50.47) ;
\draw    (521.64,135.91) -- (614.63,135.91) ;
\draw  [draw opacity=0][fill={rgb, 255:red, 128; green, 128; blue, 128 }  ,fill opacity=1 ] (518.91,50.47) .. controls (518.91,49.13) and (520.13,48.05) .. (521.64,48.05) .. controls (523.15,48.05) and (524.37,49.13) .. (524.37,50.47) .. controls (524.37,51.81) and (523.15,52.9) .. (521.64,52.9) .. controls (520.13,52.9) and (518.91,51.81) .. (518.91,50.47) -- cycle ;
\draw  [draw opacity=0][fill={rgb, 255:red, 128; green, 128; blue, 128 }  ,fill opacity=1 ] (518.91,134.22) .. controls (518.91,132.88) and (520.13,131.8) .. (521.64,131.8) .. controls (523.15,131.8) and (524.37,132.88) .. (524.37,134.22) .. controls (524.37,135.56) and (523.15,136.65) .. (521.64,136.65) .. controls (520.13,136.65) and (518.91,135.56) .. (518.91,134.22) -- cycle ;
\draw    (431.5,74.27) -- (521.27,74.27) ;
\draw [shift={(429.5,74.27)}, rotate = 0] [color={rgb, 255:red, 0; green, 0; blue, 0 }  ][line width=0.75]    (10.93,-3.29) .. controls (6.95,-1.4) and (3.31,-0.3) .. (0,0) .. controls (3.31,0.3) and (6.95,1.4) .. (10.93,3.29)   ;
\draw    (431.5,100.38) -- (521.35,100.27) ;
\draw [shift={(429.5,100.38)}, rotate = 359.93] [color={rgb, 255:red, 0; green, 0; blue, 0 }  ][line width=0.75]    (10.93,-3.29) .. controls (6.95,-1.4) and (3.31,-0.3) .. (0,0) .. controls (3.31,0.3) and (6.95,1.4) .. (10.93,3.29)   ;
\draw  [draw opacity=0][fill={rgb, 255:red, 194; green, 239; blue, 229 }  ,fill opacity=0.48 ] (429.5,74.27) -- (522.01,74.27) -- (522.01,100.38) -- (429.5,100.38) -- cycle ;
\draw  [dash pattern={on 0.84pt off 2.51pt}]  (598.1,51.21) -- (598.1,135.8) ;
\draw  [draw opacity=0][fill={rgb, 255:red, 128; green, 128; blue, 128 }  ,fill opacity=1 ] (519.3,61.36) .. controls (519.3,60.02) and (520.52,58.93) .. (522.03,58.93) .. controls (523.54,58.93) and (524.76,60.02) .. (524.76,61.36) .. controls (524.76,62.7) and (523.54,63.78) .. (522.03,63.78) .. controls (520.52,63.78) and (519.3,62.7) .. (519.3,61.36) -- cycle ;
\draw  [draw opacity=0][fill={rgb, 255:red, 128; green, 128; blue, 128 }  ,fill opacity=1 ] (519.21,120.46) .. controls (519.21,119.12) and (520.43,118.04) .. (521.94,118.04) .. controls (523.45,118.04) and (524.67,119.12) .. (524.67,120.46) .. controls (524.67,121.8) and (523.45,122.89) .. (521.94,122.89) .. controls (520.43,122.89) and (519.21,121.8) .. (519.21,120.46) -- cycle ;
\draw  [draw opacity=0][fill={rgb, 255:red, 0; green, 0; blue, 0 }  ,fill opacity=1 ] (518.51,88.01) .. controls (518.51,86.67) and (519.73,85.58) .. (521.24,85.58) .. controls (522.75,85.58) and (523.97,86.67) .. (523.97,88.01) .. controls (523.97,89.34) and (522.75,90.43) .. (521.24,90.43) .. controls (519.73,90.43) and (518.51,89.34) .. (518.51,88.01) -- cycle ;
\draw  [draw opacity=0][fill={rgb, 255:red, 189; green, 16; blue, 224 }  ,fill opacity=0.29 ] (489.5,73.54) -- (521.27,73.54) -- (521.27,99.65) -- (489.5,99.65) -- cycle ;
\draw    (489.5,74.27) -- (489.5,99.65) ;
\draw  [draw opacity=0][fill={rgb, 255:red, 128; green, 128; blue, 128 }  ,fill opacity=1 ] (79.25,99.07) .. controls (79.25,97.73) and (80.47,96.65) .. (81.98,96.65) .. controls (83.49,96.65) and (84.71,97.73) .. (84.71,99.07) .. controls (84.71,100.41) and (83.49,101.5) .. (81.98,101.5) .. controls (80.47,101.5) and (79.25,100.41) .. (79.25,99.07) -- cycle ;
\draw  [draw opacity=0][fill={rgb, 255:red, 128; green, 128; blue, 128 }  ,fill opacity=1 ] (80.28,72.96) .. controls (80.28,71.62) and (81.5,70.53) .. (83.01,70.53) .. controls (84.52,70.53) and (85.74,71.62) .. (85.74,72.96) .. controls (85.74,74.3) and (84.52,75.38) .. (83.01,75.38) .. controls (81.5,75.38) and (80.28,74.3) .. (80.28,72.96) -- cycle ;
\draw  [draw opacity=0][fill={rgb, 255:red, 128; green, 128; blue, 128 }  ,fill opacity=1 ] (300.65,99.41) .. controls (300.65,98.07) and (301.87,96.98) .. (303.38,96.98) .. controls (304.88,96.98) and (306.11,98.07) .. (306.11,99.41) .. controls (306.11,100.75) and (304.88,101.83) .. (303.38,101.83) .. controls (301.87,101.83) and (300.65,100.75) .. (300.65,99.41) -- cycle ;
\draw  [draw opacity=0][fill={rgb, 255:red, 128; green, 128; blue, 128 }  ,fill opacity=1 ] (301.68,73.3) .. controls (301.68,71.96) and (302.9,70.87) .. (304.41,70.87) .. controls (305.92,70.87) and (307.14,71.96) .. (307.14,73.3) .. controls (307.14,74.64) and (305.92,75.72) .. (304.41,75.72) .. controls (302.9,75.72) and (301.68,74.64) .. (301.68,73.3) -- cycle ;
\draw  [draw opacity=0][fill={rgb, 255:red, 128; green, 128; blue, 128 }  ,fill opacity=1 ] (518.62,100.27) .. controls (518.62,98.93) and (519.84,97.85) .. (521.35,97.85) .. controls (522.85,97.85) and (524.08,98.93) .. (524.08,100.27) .. controls (524.08,101.61) and (522.85,102.7) .. (521.35,102.7) .. controls (519.84,102.7) and (518.62,101.61) .. (518.62,100.27) -- cycle ;
\draw  [draw opacity=0][fill={rgb, 255:red, 128; green, 128; blue, 128 }  ,fill opacity=1 ] (519.65,74.16) .. controls (519.65,72.82) and (520.87,71.73) .. (522.38,71.73) .. controls (523.89,71.73) and (525.11,72.82) .. (525.11,74.16) .. controls (525.11,75.5) and (523.89,76.58) .. (522.38,76.58) .. controls (520.87,76.58) and (519.65,75.5) .. (519.65,74.16) -- cycle ;
\draw    (160,40) .. controls (174.86,33.51) and (181.71,32.17) .. (198.17,39.2) ;
\draw [shift={(200,40)}, rotate = 203.99] [color={rgb, 255:red, 0; green, 0; blue, 0 }  ][line width=0.75]    (10.93,-3.29) .. controls (6.95,-1.4) and (3.31,-0.3) .. (0,0) .. controls (3.31,0.3) and (6.95,1.4) .. (10.93,3.29)   ;
\draw    (230,40) .. controls (244.86,33.51) and (251.71,32.17) .. (268.17,39.2) ;
\draw [shift={(270,40)}, rotate = 203.99] [color={rgb, 255:red, 0; green, 0; blue, 0 }  ][line width=0.75]    (10.93,-3.29) .. controls (6.95,-1.4) and (3.31,-0.3) .. (0,0) .. controls (3.31,0.3) and (6.95,1.4) .. (10.93,3.29)   ;
\draw    (400,40) .. controls (414.86,33.51) and (421.71,32.17) .. (438.17,39.2) ;
\draw [shift={(440,40)}, rotate = 203.99] [color={rgb, 255:red, 0; green, 0; blue, 0 }  ][line width=0.75]    (10.93,-3.29) .. controls (6.95,-1.4) and (3.31,-0.3) .. (0,0) .. controls (3.31,0.3) and (6.95,1.4) .. (10.93,3.29)   ;
\draw    (525.11,74.16) .. controls (547.5,74.48) and (547,98.48) .. (524.08,100.27) ;
\draw  [draw opacity=0][fill={rgb, 255:red, 189; green, 16; blue, 224 }  ,fill opacity=0.2 ] (521.35,100.85) -- (522.38,74.16) -- (535,76) -- (542,85) -- (537,97) -- cycle ;

\draw (127.4,137) node [anchor=north west][inner sep=0.75pt]    {$Q_{i}$};
\draw (3,102.47) node [anchor=north west][inner sep=0.75pt]  [font=\footnotesize]  {$E_{L}( x_{0})$};
\draw (125.3,75.51) node [anchor=north west][inner sep=0.75pt]  [font=\footnotesize]  {$W$};
\draw (115.22,109.54) node [anchor=north west][inner sep=0.75pt]    {$V_{\text{min}}^{( i)}$};
\draw (64,77.4) node [anchor=north west][inner sep=0.75pt]  [font=\footnotesize]  {$x_{0}$};
\draw (171,9.4) node [anchor=north west][inner sep=0.75pt]    {$h_{x_{0}}$};
\draw (230,103.4) node [anchor=north west][inner sep=0.75pt]  [font=\footnotesize]  {$E_{L}( x_{p-1})$};
\draw (238,10.4) node [anchor=north west][inner sep=0.75pt]    {$h_{x_{p-2}}$};
\draw (336,89.39) node [anchor=north west][inner sep=0.75pt]  [font=\footnotesize]  {$f_{0}^{p-1}( W)$};
\draw (566.4,138.2) node [anchor=north west][inner sep=0.75pt]    {$Q_{i}$};
\draw (421,104.4) node [anchor=north west][inner sep=0.75pt]  [font=\footnotesize]  {$E_{L}( x_{0})$};
\draw (542,81.4) node [anchor=north west][inner sep=0.75pt]  [font=\footnotesize]  {$f_{0}^{p}( W)$};
\draw (401,8.4) node [anchor=north west][inner sep=0.75pt]    {$h_{x_{p-1}}$};
\draw (465,53.4) node [anchor=north west][inner sep=0.75pt]  [font=\footnotesize]  {$( 1,1)_{L,x_{0}}$};
\draw (467.42,103.73) node [anchor=north west][inner sep=0.75pt]  [font=\footnotesize]  {$( 0,1)_{L,x_{0}}$};
\draw (504,80.4) node [anchor=north west][inner sep=0.75pt]  [font=\footnotesize]  {$x_{0}$};
\draw (273.65,77.47) node [anchor=north west][inner sep=0.75pt]  [font=\footnotesize]  {$x_{p-1}$};
\draw (204,44.4) node [anchor=north west][inner sep=0.75pt]    {$\dotsc $};

\end{tikzpicture}
  \end{center}
  \caption{The switch regions $P(x_i)$ for an orbit $x_0, x_1, \dots, x_{p-1}$ of non-corner periodic points of $f_L$ for $x_0$ an initial periodic point are indicated by the first two purple regions on the left. The final purple region on the right indicates the image of the final switch map $h_{x_{p-1}}$. This final region is a neighborhood of the initial periodic point $x_0$, but it is different than the switch region $P(x_0)$, because we want to include a shift. }
  \label{fig: switch maps left}
  \end{figure}
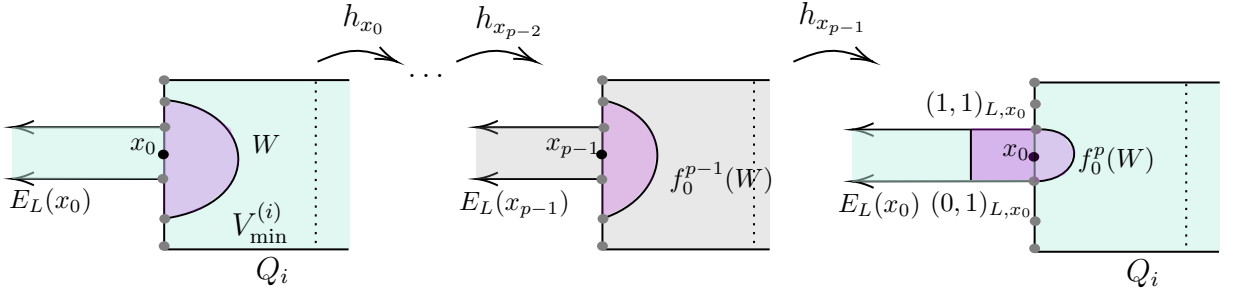

Switch regions for non-corner periodic points of $f_R$ are defined similarly.

\bigskip

\item[(ii)] \textbf{Non-corner periodic points of Top and Bottom Edge Maps.}
Let $x_0, \dots, x_{p-1}$ be the orbit of a non-corner periodic point of $f_T^{-1}$,
labeled so that $f_T(x_j)=x_{j+1}$ and $x_0$ is initial.
Let $W$ be a path joining 
$f_T^{-2p}(\mathrm{Top\ left}(Q_i))$ to 
$f_T^{-2p}(\mathrm{Top\ right}(Q_i))$
such that 
\[
f_0^j(\mathrm{int}(W)) \subseteq V_0 \cap H_0 
\quad \text{for } 0 \le j \le p.
\]

For $0 \le j \le p-1$, define $P(x_j)$ to be the region in $\Sigma_1$ bounded by $f_0^j(\mathrm{int}(W)$ and the left, top and right sides of the square $[0,1]\times[0,1] \subseteq E_T(x_j)$ 
(see \Cref{fig: top switch maps}).

Switch regions for non-corner periodic points of $f_B^{-1}$
are defined similarly.

 \begin{figure}[htbp]
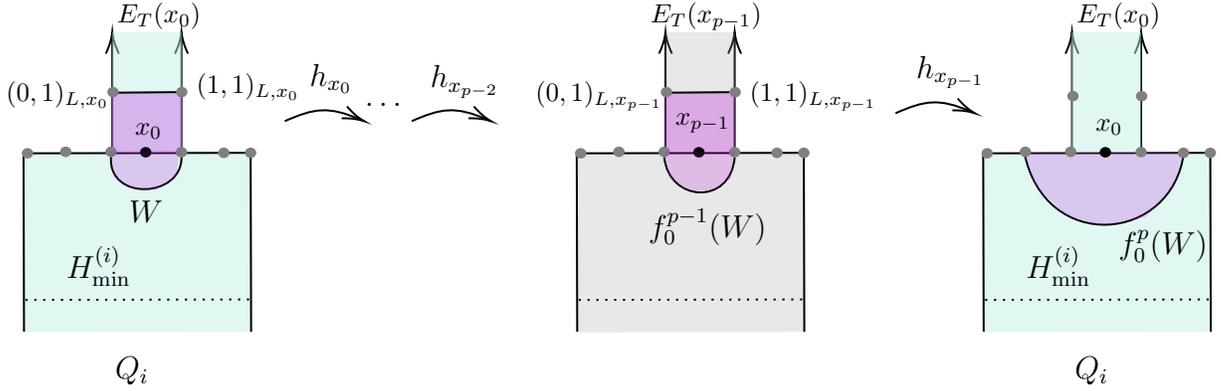

  \begin{center}

\tikzset{every picture/.style={line width=0.75pt}} 


  \end{center}
    \caption{The switch regions $P(x_i)$ for an orbit $x_0, x_1, \dots, x_{p-1}$ of non-corner periodic points of $f_T$ for $x_0$ an initial periodic point are indicated by the first two purple regions on the left. The final purple region on the right indicates the image of the final switch map $h_{x_{p-1}}$. This final region is a neighborhood of the initial periodic point $x_0$, but it is different than the switch region $P(x_0)$, because we want to include a shift.}
    \label{fig: top switch maps}
  \end{figure}


\item[(iii)] \textbf{Corner periodic points.}
Let $x_0, \dots, x_{p-1}$ be an orbit of corner periodic points under $f_L$
with $x_0 = \mathrm{Top\ left}(Q_i)$.
Let $W$ be a path joining 
$f_L^p(\mathrm{Bottom\ left}(Q_i))$ to 
$f_T^{-2p}(\mathrm{Top\ right}(Q_i))$
such that 
\[
f_0^j(\mathrm{int}(W)) \subseteq V_0 \cap H_0 
\quad \text{for } 0 \le j \le p.
\]

Let $I \subset \mathrm{Left\ edge}(Q_i)$ be the subsegment with the endpoints $f_L^{p}(\mathrm{Bottom\ left}(Q_i))$ and $x_0$. 

For $0 \le j \le p-1$, define $P(x_j)$ to be the region in $\Sigma_1$ bounded by $f_0^j(\mathrm{int}(W))$, the left, top, and right sides of the square $[0,1]\times[0,1] \subseteq E_T(x_j)$, as well $f_L^{p+j}(I)$ (see \Cref{fig:switch map corner}).

\begin{figure}[htbp]
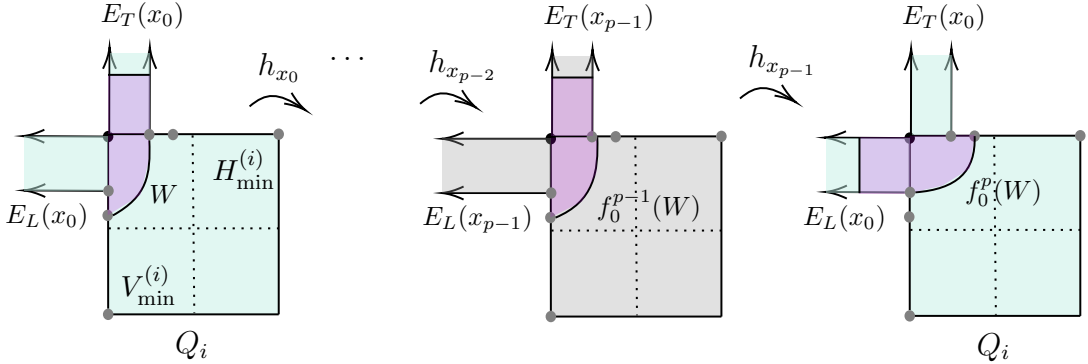

    \begin{center}

\tikzset{every picture/.style={line width=0.75pt}} 


    \end{center}
\caption{The switch region for $x_0$ is indicated in by the purple region in the left most expanded rectangle. The purple region in the rightmost expanded rectangle is the image of the switch map $h_{x_{p-1}}$. }
\label{fig:switch map corner}
    \end{figure}
     
Switch regions for the remaining corner periodic points 
are defined in the same manner.

\end{itemize}
\end{definition}

\begin{definition}
    Let $f_1:V_1 \rightarrow H_1$ be the \textit{extended piece map corresponding to $M$} given by
    \[f_1(y) = \begin{cases} f_0(y) & \text{if } y \in V_0 \text{ and } y \notin P(x) \text{ for any periodic }x \\
    h_x(y) & \text{if } y \in P(x) \\
    (z,w)_{L,f_L(x)} & \text{if } y = (z,w)_{L,x} - P(x) \text{ and }f_L(x) \text{ not initial}\\
    (z,w)_{R,f_R(x)} & \text{if } y = (z,w)_{R,x} - P(x) \text{ and }f_R(x) \text{ not initial}\\
    (z, w)_{T,f_T(x)} & \text{if } y = (z,w)_{T,x} - P(x) \text{ and }f_T(x) \text{ not initial}\\
    (z, w)_{B,f_B(x)} & \text{if } y = (z,w)_{B,x} - P(x) \text{ and }f_B(x) \text{ not initial}\\
    (z,w+1)_{L,f_L(x)} & \text{if } y = (z,w)_{L,x} - P(x) \text{ and }f_L(x) \text{ initial}\\
    (z,w+1)_{R,f_R(x)} & \text{if } y = (z,w)_{R,x} - P(x) \text{ and }f_R(x) \text{ initial}\\
    (z, w-1)_{T,f_T(x)} & \text{if } y = (z,w)_{T,x} - P(x) \text{ and }f_T(x) \text{ initial}\\
    (z, w-1)_{B,f_B(x)} & \text{if } y = (z,w)_{B,x} - P(x) \text{ and }f_B(x) \text{ initial}
    \end{cases}\]
    
    For each periodic point $x$ of the edge maps,  the \textit{switch map} $h_x$ is a continuous interpolation defined as follow. Let $x'=f_{*}(x)$ for $* = L, R, B,$ or $T$. If $x'$ is not an initial periodic point, then $$h_x:P(x) \rightarrow P(x').$$
    If $x'$ is an initial periodic point with period $p$, then 
    \[h_x:P(x) \rightarrow  ([0,1] \times [0,1])_{*,x'} \cup \overline{(f_0^p(\textrm{int}(P(x')))})\]
    for $* = L,R,B$ or $T$ according to which edge $x$ lies on. 
    
    Note that, in general, $f_T^{-1}$ (resp. $f_B^{-1}$) is defined,  but not $f^T$ (resp. $f^B$). However, in the definition of $f_1$ above, we only apply $f_T$ (resp. $f_B$) to points $x$ which are periodic under $f_T^{-1}$ (resp. $f_B^{-1}$), since in this case the inverse is well-defined.

\end{definition}


\begin{figure}[htbp]
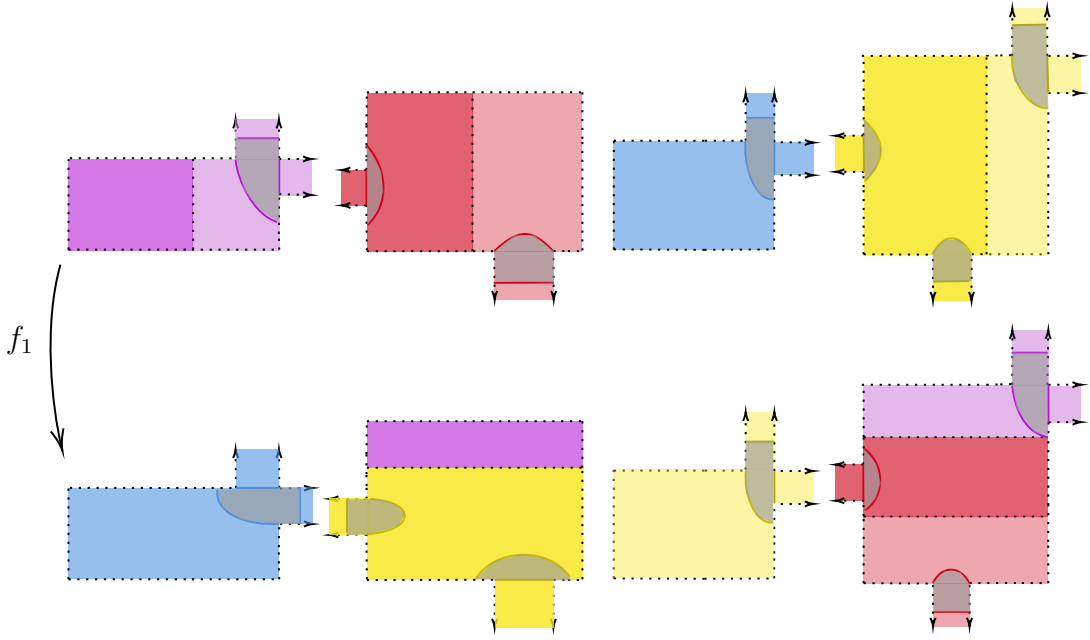

    \begin{center}

\tikzset{every picture/.style={line width=0.75pt}} 


    \end{center}
    \caption{\textbf{Running Example:} The extended piece map $f_1:V_1 \rightarrow H_1$ corresponding to $M$. The three periodic points in $Q_1$ and $Q_2$ are chosen to be the initial periodic points. The switch regions and their images are indicated in gray.}
    \label{fig:ex-piecemap}
\end{figure}

Next, we define four new edge maps coming from the different ways to extend $f_1$ and $f_1^{-1}$ to the boundary of the partitions $V_1$ and $H_1$. 

First, let $L_1$ be the union of the ``left edges" of each $(V_{i,j}^{(k)})'$, where we include the boundary of any infinite strips strips glued onto the left edge of a strip, but exclude the region of the left edge where the gluing takes place.  If an infinite strip is glued at the top of a left edge, only include the bottom boundary of the infinite strip in $L_1$. If an infinite strip is glued at the bottom of a left edge, only include the top boundary of the infinite strip in $L_1$. Similarly define $R_1$, $T_1$ and $B_1$.

\begin{notation} \label[notation]{from the right}
Every point $x \in L_1$ is the limit of a sequence of points $x_i \in V_1$. If $x$ is not in the boundary of an infinite strip, then the points $x_i$ can be taken to \textit{converge to $x$ from the right}. If $x$ is in the boundary of an infinite strip, then take any sequence in $V_1$ converging to $x$, and still refer to this sequence as \textit{converging to $x$ from the right.} We will use similar language of converging from the left for points in $R_1$. For points in $x \in T_1$ (resp. $B_1$), we will choose sequences \textit{converging to $x$ from below (resp. above).}
\end{notation}

\begin{definition} (Edge maps of $f_1$)
   \begin{enumerate}
       \item The \textit{left edge map of $f_1$} is $$f_{1,L}:L_1 \rightarrow L_1$$ is given by
       $f_{1,L}(x) := \lim_{i \rightarrow \infty}f_1(x_i)$, where the limit is taken in $\Sigma_1$ and $x = \lim_{i \rightarrow \infty}x_i$ for $x_i \in V_0$ converging to $x$ from the right.
       \item The \textit{right edge map of $f_1$} is $$f_{1,R}:R_1 \rightarrow R_1$$ is given by
       $f_{1,R}(x) := \lim_{i \rightarrow \infty}f_1(x_i)$, where the limit is taken in $\Sigma_1$ and $x = \lim_{i \rightarrow \infty}x_i$ for $x_i \in V_1$ converging to $x$ from the left.
       \item The \textit{top edge map of $f_1$} is $$f_{1,T}^{-1}:T_1 \rightarrow T_1$$ is given by
       $f_{1,T}^{-1}(x) := \lim_{i \rightarrow \infty}f_1^{-1}(x_i)$, where the limit is taken in $\Sigma_1$ and $x = \lim_{i \rightarrow \infty}x_i$ for $x_i \in H_1$ converging to $x$ from below.
       \item The \textit{bottom edge map of $f_1$} is $$f_{1,B}^{-1}:B_1 \rightarrow B_1$$ is given by
       $f_{1,B}^{-1}(x) := \lim_{i \rightarrow \infty}f_1^{-1}(x_i)$, where the limit is taken in $\Sigma_1$ and $x = \lim_{i \rightarrow \infty}x_i$ for $x_i \in H_1$ converging to $x$ from above.
   \end{enumerate}
\end{definition}

Because we have restricted the direction from which a sequence approaches a point in $L_1,R_1,T_1$, or $B_1$, these four continuous extensions are well-defined. \\

\begin{lemma}\label[lemma]{lemma: finite non-injective}
If $x \in L_1$ and $x$ is not a corner of a vertical strip in $V_1$, then $f_{1,L}^N$ is injective at $x$ for every power $N \in \mathbb{Z}_{\geq 1}$. Moreover, if $x \in L_1$ is a corner of a vertical strip, then for each $N \in \mathbb{Z}_{\geq 1}$ there is at most one other point $w \in L_1$ such that $f_{1,L}^N(w)=f^N_{1,L}(x)$. 

Analogous statements hold for $f_{1,R}$, $f_{1,T}^{-1}$ and $f_{1,B}^{-1}$. 
\end{lemma}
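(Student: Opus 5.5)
The plan is to decompose $L_1$ into finitely many closed pieces on each of which $f_{1,L}$ is injective and continuous, and then see how the images of these pieces can overlap. The pieces are the closures of the left edges of the individual strips $(V^{(k)}_{i,j})'$ together with the attached boundary rays of the infinite strips. On each such piece $f_{1,L}$ is the one-sided continuous extension of the homeomorphism $f_1$ restricted to a single strip. Away from the switch regions it is therefore either the affine map $f_L$, which sends Left edge$(\tau_k^{-1}(V^{(k)}_{i,j}))$ linearly onto Left edge$(\sigma_i(H^{(i)}_{k,j}))$, or a translation of a boundary ray of an infinite strip. Inside a switch region it is the boundary restriction of the homeomorphism $h_x$. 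In all cases the restriction to one piece is injective.

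The first step is to show that the images of distinct pieces meet in at most endpoints. For the $f_0$-part I will use that $f_0$ maps the distinct open vertical strips onto distinct open horizontal strips. The left edge of each $H$-strip lies in the left edge of the rectangle $Q_i$, and the horizontal strips partition $Q_i$. So the images of distinct left-edge pieces are subsegments of Left edge$(Q_i)$ with disjoint interiors, and they can share only a corner point, namely a common top/bottom endpoint of two vertically adjacent horizontal strips. For the infinite-strip and switch-region parts, the gluing map $g$ and the switch maps $h_x$ are homeomorphisms onto their images, and these images are disjoint from the other $f_0$-images except along the boundary arcs where they are glued. So the same conclusion holds.

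This gives $N=1$. If $f_{1,L}(x)=f_{1,L}(w)$ with $x\neq w$, then $x$ and $w$ lie in different pieces and both map to a shared endpoint. Endpoints of a piece correspond to corners of vertical strips, because the map on each piece is a homeomorphism and therefore sends endpoints to endpoints. At a shared endpoint exactly two image segments abut, one from above and one from below, so at most one other point has the same image. The next step is to induct on $N$. Suppose $x$ is not a corner. Then $f_{1,L}(x)$ is an interior point of an image segment; since that segment is in turn the interior of some piece, $f_{1,L}(x)$ is again not a corner of a vertical strip in $V_1$. By the $N=1$ case, injectivity at $f_{1,L}(x)$ for $f_{1,L}^{N-1}$, combined with injectivity at $x$, gives injectivity of $f_{1,L}^N$ at $x$. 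Now suppose $x$ is a corner and $f^N_{1,L}(w)=f^N_{1,L}(x)$. Take the first time $m$ at which $f^m_{1,L}(w)=f^m_{1,L}(x)$. By the $N=1$ case, the two points $f^{m-1}_{1,L}(w)$ and $f^{m-1}_{1,L}(x)$ are the two corner preimages of a single point. Because the orbit of $x$ never re-enters the interior of a piece in a non-injective way, and each point has at most two preimages, the point $w$ is determined by $m$.

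The main obstacle is showing that $m$ is unique, i.e.\ that distinct merge times cannot produce two different partners $w$. This is where the strip construction is essential. The forward orbit of a corner point under $f_{1,L}$ stays among corners only along orbits that in $f_L$ would converge to periodic points, and these are exactly the orbits that have been diverted into the infinite strips. My first attempt will be to show that once a corner image lands on the boundary of $Q_i$ or an infinite strip ray, it is no longer a shared endpoint of two images, since the rectangle's own top and bottom corners and the glued strip rays have only one incoming piece. Consequently a corner orbit can merge with another orbit at most once. The same argument applies verbatim to $f_{1,R}$, $f^{-1}_{1,T}$ and $f^{-1}_{1,B}$ after interchanging the roles of $V$ and $H$.
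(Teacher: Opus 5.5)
Your proposal is correct and follows essentially the paper's route: the $N=1$ case (non-injectivity occurs only at a top-left/bottom-left pair of corners sent to a common endpoint of adjacent horizontal strips), the propagation step that non-corners map to non-corners (the paper phrases this as ``preimages of corners are corners''), and a first-merge-time argument for the corner case. Your explicit handling of uniqueness of the merge time is more careful than the paper's one-line final paragraph, but the ``consequently'' should be spelled out: the merge point is a common endpoint of two horizontal strips lying in the interior of a rectangle's left edge, hence not a corner, so by your induction step the orbit of $x$ never reaches a corner again and cannot merge a second time; and $w$ is recovered from $f_{1,L}^{m-1}(w)$ because rectangle corners have a unique preimage (``each point has at most two preimages'' is not enough for this).
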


\begin{proof}
    First, suppose $f_{1,L}(x)=f_{1,L}(y)$ for distinct $x,y \in L_1$. Let $\{x_i\}$ (resp. $\{y_i\}$) be sequences in $V_1$ converging to $x$ (resp. $y$) from the right. Some tail of $\{f_1(x_i)\}$ is contained in a single horizontal strip in $H_1$, and similarly for $\{f_1(y_i)\}$. If both tails were contained in the same horizontal strip, then, since $f_1:V_1\rightarrow H_1$ is a homeomorphism, we would have $x=y$. Hence the tails are contained in different horizontal strips. Thus $f_{1,L}(x)$ is equal to the left end point of the top edge of some horizontal strip as well as the left end point of the bottom edge of a different horizontal strip. Thus by definition of $f_1$, we must have that $x$ and $y$ are both left corners of different vertical strips, one a bottom left corner and one a top left corner.  

    Now suppose $f_{1,L}^N(x)=f_{1,L}^N(y)$ for distinct points $x,y \in L_1$. By the previous paragraph, either $f_{1,L}^{N-1}(x)=f_{1,L}^{N-1}(y)$, or $f_{1,L}^{N-1}(x)$ and $f_{1,L}^{N-1}(y)$ are corners of different vertical strips. Let $n \in \mathbb{Z}_{\geq 1}$ be the smallest positive integer such that $f_{1,L}^n(x)=f_{1,L}^n(y)$. Thus $f_{1,L}^{n-1}(x)$ and $f_{1,L}^{n-1}(y)$ are corners of different vertical strips. If $n=1$, we are done, so suppose $n\geq 2$. Then $f_{1,L}^{n-1}(x)$ and $f_{1,L}^{n-1}(y)$ are contained in the left edges of the rectangles, and are hence each a corner of a rectangle, and thus also a corner of a horizontal strip. The only points which get mapped to corners of horizontal strips under powers of $f_{1,L}$ are corners of vertical strips. Hence $x$ and $y$ are corners. 

    The final statement in the lemma follows from observing that at most two corners of vertical strips get mapped to the same point under $f_{1,L}$, and the pre-image of corners of vertical strips under $f_{1,L}$ is contained in the set of corners of vertical strips. 
    
    The proof for the other edge maps is similar. Note that at most two corners of horizontal strips get mapped to the same point under $f_{1,T}^{-1}$ or $f_{1,B}^{-1}$.
\end{proof}

We have constructed $f_1$ so that for sufficiently large $N$, the $N^{\text{th}}$ iterate of an edge map sends every edge point to a point lying in the boundary of one of the infinite strips attached to $\Sigma_0$. Once an iterate enters an infinite strip, $f_1$ acts as a shift, ensuring that all subsequent images form an infinite discrete set. We formalize this in the following lemma.

\begin{lemma}\label[lemma]{lemma: infinite orbit}
    Every orbit of the edge maps of $f_1$ is infinite and discrete. 
\end{lemma}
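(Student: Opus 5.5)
I will treat $f_{1,L}$ in detail; the other three edge maps are handled by the same argument with the roles of the strips and the sign of the shift adjusted. The plan is to show two things. First, every forward orbit of $f_{1,L}$ eventually enters the boundary of an infinite strip $E_L(x)$. Second, once inside the strips, the orbit is pushed monotonically out to infinity. Together these give an infinite orbit with no accumulation point.

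\textbf{Step 1: every orbit reaches a strip.} Take $y\in L_1$ that does not lie on a strip boundary, so $y$ lies on a left edge of a vertical strip in $\Sigma_0$. By definition $f_{1,L}$ agrees with $f_L$ away from the switch regions. The switch regions are built from $W$ and the segments $f_L^{j}(I)$, so the part of an edge lying in a switch region is exactly the part adjacent to the gluing intervals $g([0,1]\times\{0\})$. Every iterate of $f_L$ maps a left edge into a left edge of some $Q_j$. Moreover $f_L^k(L)$ shrinks to the finite set of periodic points, by the lemma on periodic points of the edge maps: $\bigcap_k f_L^k(L)$ is finite and the relevant compositions are contractions, with lengths scaling by $\lambda^{-1}$. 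Since the gluing intervals $f_L^{2p+j}(\text{Left edge}(Q_i))$ are neighborhoods of the periodic points within $f_L^{2p+j}(L)$, some iterate of $y$ lands either in a switch region or directly on the gluing region. In either case the next applications of $h_x$ send it onto the boundary of $E_L(x')$.

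I expect the main obstacle to be making precise that the orbit cannot stay forever in $L\cap\Sigma_0$ while hovering near a periodic point without being captured. The way I would handle this is to use the defining property of $W$, namely $f_0^j(\operatorname{int} W)\subseteq V_0\cap H_0$ for $0\le j\le p$, together with the fact that $f_L^p$ contracts $\text{Left edge}(Q_i)$ into the sub-segment $I$ near $x_0$. The open neighborhood of the periodic orbit cut out by the switch regions is then forward-absorbing. Its complement in $L\cap\Sigma_0$ is covered by finitely many pieces, each of which is driven into that neighborhood after boundedly many steps by contraction. Corner points are handled with \Cref{lemma: finite non-injective} and the compatible choice of initial points, so that horizontal and vertical corner orbits are captured consistently.

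\textbf{Step 2: behaviour in the strips.} For a point $(z,w)_{L,x}$ on the boundary of an infinite strip, outside $P(x)$, the formula for $f_1$ gives $f_{1,L}((z,w)_{L,x})=(z,w)_{L,f_L(x)}$ or $(z,w+1)_{L,f_L(x)}$. The height is increased by $1$ exactly once each time the orbit passes the initial point of its $f_L$-cycle. The switch maps take $P(x)$ into the strip square $[0,1]\times[0,1]$ at the initial point, so subsequent points stay in the strips. Hence after entering, the $w$-coordinate of the $k$-th iterate is at least $\lfloor k/p\rfloor+c$, which tends to infinity.

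\textbf{Conclusion.} The orbit is therefore infinite, since no point repeats once $w$ is strictly increasing along a subsequence. It is also discrete, because any compact subset of $\Sigma_1$ meets each strip in a bounded $w$-range and so contains only finitely many orbit points.
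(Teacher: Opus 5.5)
Your proof is correct, and its overall architecture matches the paper's: first get every $f_{1,L}$-orbit onto a boundary ray of some $E_L(x)$, then use that the height $w$ increases by $1$ once per passage through the initial point. Your Step 2 is the paper's case (i). The difference is the capture step.

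The paper argues combinatorially: a long enough directed path in $D_L$ reaches a periodic vertex, and it then asserts that the next iterate lies in $\partial P(z)$. You argue metrically:
\begin{itemize}
\item the sets $f_L^k(L)$, $k\ge 1$, are nested, and each is a union of at most $n$ segments of length $O(\lambda^{-k})$, so they converge to the finite periodic set;
\item the left-edge part of each switch region contains a relative neighborhood of its periodic point. For a non-corner orbit this part is $f_L^j(I)=f_L^{p+j}(\text{Left edge}(Q_i))$; at a corner it still works because $I$ ends at $x_0$;
\item by compactness there is a uniform $K$ with $f_L^K(L)$ inside these switch edges, and since $f_{1,L}=f_L$ before the first entry, every orbit not starting on a strip enters a switch edge within $K$ steps.
\end{itemize}

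Your route buys something real. When $p>1$, reaching a rectangle whose left edge contains a periodic point does not by itself put a point in $f_L^j(I)$. In general one needs up to roughly $2p$ further iterates of the contraction around the cycle. So your step supplies exactly what the paper's case (iii) leaves implicit, and it also handles the interior vertical edges (the paper's case (iv)) at the same time. The digraph route, for its part, gives an explicit entry time in terms of $D_L$ and the periods.

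Once you have the uniform $K$, your ``hovering'' worry and the forward-absorbing discussion are unnecessary. The non-injectivity lemma also plays no role at corners.

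Two small corrections:
\begin{itemize}
\item Only the last switch map $h_{x_{p-1}}$ lands in $([0,1]\times[0,1])_{L,x_0}$; the others send $P(x_j)$ to $P(x_{j+1})$. So switch-edge points reach the strip rays after at most $p$ iterates.
\item The contraction needs $\lambda>1$, i.e.\ $M$ not a permutation matrix. The paper also assumes this tacitly.
\end{itemize}
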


\begin{proof}
Let $x_0 \in L_1$ and define 
$$x_j := f_{L,1}^j(x_0)$$
for each $j \in \mathbb{Z}_+$. Either $x_0 \in \partial (E_L(z) - P(z))$ or $x_0 \in \partial P(z)$ for some $z \in L_0$ which has period $p$ under $f_L$, or $x_0 \in $ Left edge$(Q_i)$ for some $i$, or $x_0 \in \textrm{int}(\Sigma_0)$. 

\begin{itemize}
    \item[(i)] First, suppose $x_0 = (\alpha, \beta)_{L,z} \in \partial ( E_L(z) - P(z))$. Then
    $$x_0 = \lim_{k \rightarrow \infty} (\alpha_k,\beta_k)_{L,z}$$ for some sequence $(\alpha_k,\beta_k)_{L,z}$ in $\mathrm{int}(E_L(z)- P(z))$. In this case, 
$$f_1^{pm+s}(\alpha_k,\beta_k) = (\alpha_k,\beta_k+m)_{L,f_L^s(z)}$$
 for each $m \in \mathbb{Z}_+$ and each $0 \leq s \leq p-1$. Hence
$$f_{1,L}^{pm+s}(\alpha,\beta) = (\alpha, \beta + m)_{L,f_L^s(z)}.$$
Thus the orbit of $x_0$ is an infinite and discrete set in $\Sigma_1$. 

\item[(ii)] Now, suppose $x_0 \in \partial P(z)$ and
    $$x_0 = \lim_{k \rightarrow \infty}a_k$$ for some sequence $a_k$ in $\mathrm{int}(P(z))$. By definition of the switch maps, 
$$f_1^{2p}(a_k) := (f_1^p \circ h_{z_{p-1}} \circ h_{z_{p-2}} \circ \dots  \circ h_{z_0})(a_k) \in \mathrm{int}(E_L(z) - P(z)).$$

Hence $x_{2p}$ is the limit of points in $\mathrm{int}(E_L(z)-P(z))$. Therefore the orbit of $x_0$ is infinite and discrete by part (i). 

\item[(iii)] Suppose $x \in \mathrm{Left \ edge}(Q_i)$. As long as $x_j$ is on a left edge without any periodic point of $f_L$, then $x_{j+1}=f_{1,L}(x_j)=f_L(x_j)$. Since every long enough directed path in the digraph $D_L$ eventually contains a periodic vertex, there is a power $N$ such that $x_{N}=f^N_L(x_0)$ is on a left edge of $Q_k$ containing a periodic point of $f_L$. Choose $N$ to be the first time this happens. In this case, 
$$x_{N+1} = f_{1,L}(x_N) \in \partial P(z).$$ 
Therefore the orbit of $x_0$ is infinite and discrete by part (ii).

\item[(iv)] Finally, suppose $x_0 \in \textrm{int}(\Sigma_1)$. Since $x_0 \in \textrm{int}(\Sigma_0) \cap L_1$, we know $x_0$ is the limit of points $y_i \in V_0$ converging to $x_0$ from the right such that $f_1(y_i) = f_0(y_i)$. Hence $x_1=f_L(x_0)$. Thus $x_1$ is on the left edge of some extended rectangle $Q_k$. Hence the orbit of $x_0$ is infinite and discrete by part (iii). \qedhere
\end{itemize}

\end{proof}

\begin{definition} \label[definition]{def:equiv}
    The \textit{infinite 2-complex corresponding to $M$} is 
    $$\Sigma_2 : = \Sigma_1 / \sim$$
    where $\sim$ is an equivalence relation generated by
    $z \sim w$ if
    \begin{itemize}
        \item[(i)] there exists $x \in L_1 \cap R_1$ and $N \in \mathbb{Z}_{\geq 1}$ such that $z = f_{1,L}^N(x)$ and $w = f_{1,R}^N(x)$, or
        \item[(ii)] there exists $y \in T_1 \cap B_1$ and $N \in \mathbb{Z}_{\geq 1}$ such that $z = f_{1,T}^{-N}(y)$ and $w = f_{1,B}^{-N}(y)$.
    \end{itemize}
\end{definition}

The 2-complex $\Sigma_2$ is defined so that we will be able to extend $f_1$ to a homeomorphism on $\Sigma_2$. However, $\Sigma_2$ is not, in general, a surface. To address this, we will make a few minor modifications to $\Sigma_2$ to form a surface $\Sigma$, and then show that $f_1$ extends to a homeomorphism on $\Sigma$. In subsequent sections, we will show that $\Sigma$ can be further modified to ensure that it is of infinite type (\Cref{infinite-type}) and that the homeomorphism we have built is indeed end-periodic (\Cref{sec:end-periodic map}). First, we focus on proving several technical lemmas which will aid in building $\Sigma$ and extending $f_1$. 

We will use the following characterization of the infinite equivalence classes for the equivalence relation introduced in \Cref{def:equiv} both in the proof of \Cref{lemma:links} and when we show that the map we are building is end-periodic in \Cref{sec:end-periodic map}.

\begin{lemma}\label[lemma]{lemma:infinite equivalence}
    The equivalence class $[f_{1,L}(z)]_{\sim}$ is infinite if and only if $[z]_{\sim}$ is. Similarly for $f_{1,R}$, $f_{1,T}^{-1}$, and $f_{1,B}^{-1}$. 
\end{lemma}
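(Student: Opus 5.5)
The plan is to show that each edge map induces a map on equivalence classes that is finite-to-one in both directions. Concretely, the goal is to prove that $f_{1,L}$ sends the class $[z]_\sim$ into $[f_{1,L}(z)]_\sim$, and that $[f_{1,L}(z)]_\sim$ is contained in the image under $f_{1,L}$ of $[z]_\sim$ together with a finite set. Once both inclusions are established, infiniteness passes back and forth. The finite-to-one part comes from \Cref{lemma: finite non-injective}: any point of $L_1$ has at most two preimages under each iterate of $f_{1,L}$.

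\textbf{Step 1 (compatibility of generators).} I will check how generating pairs behave under the edge maps. For a type (i) generator $z=f_{1,L}^N(x)$, $w=f_{1,R}^N(x)$, I want $f_{1,L}(z)\sim f_{1,L}(w)$. The natural candidate is the pair $f_{1,L}^{N+1}(x)$ and $f_{1,R}^{N+1}(x)$, which is again a generator. This requires $f_{1,L}(w)=f_{1,R}(w)$ whenever $w$ is an image point lying in $L_1\cap R_1$. The reason to expect this is that $f_{1,L}$ and $f_{1,R}$ only differ on interior vertical edges $X$, whose images lie on left/right edges of the rectangles, and there the images are single-sided. Where they genuinely differ, the definition of $\sim$ already identifies the two images. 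Type (ii) generators lie on top/bottom edges. For these I will track them through $f_{1,L}$ only at corner points, using the corner compatibility observation made after the edge-map periodicity lemma.

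\textbf{Step 2 (both directions).} Suppose $[z]_\sim$ is infinite. The image $f_{1,L}([z]_\sim\cap L_1)$ lies in $[f_{1,L}(z)]_\sim$, and by \Cref{lemma: finite non-injective} this image is still infinite. For this I need that all but finitely many elements of an infinite class lie in $L_1$. That should follow because classes are generated along the forward orbits described in \Cref{lemma: infinite orbit}, and a class is built from chains of edge-map images. Conversely, suppose $[f_{1,L}(z)]_\sim$ is infinite. Each generating relation among its members arises as $N\ge 1$ iterates of some $x$. By peeling off one iterate, i.e.\ replacing $N$ by $N-1$, I can pull the chain back to a chain of relations through $z$, except for the finitely many relations with $N=1$ based at points of $X$ near $z$. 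Again by the two-to-one bound, infinitely many distinct points pull back to infinitely many distinct points of $[z]_\sim$. The statements for $f_{1,R}$, $f_{1,T}^{-1}$ and $f_{1,B}^{-1}$ follow by the symmetric argument.

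\textbf{Main obstacle.} I expect the hardest part to be the pull-back in Step 2 for mixed chains that alternate between type (i) and type (ii) relations. Those meet only at corners of strips, where $f_{1,L}$ and $f_{1,T}^{-1}$ interact. I would handle them by a case analysis on corner points, showing that a corner identification after applying $f_{1,L}$ comes from a corner identification before. This uses the fact that preimages of corners are corners, as shown in the proof of \Cref{lemma: finite non-injective}.
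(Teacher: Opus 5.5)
Your overall strategy is the paper's: push each generator forward by one more iterate, and for the converse peel one iterate off and control the exceptions with \Cref{lemma: finite non-injective}. But the forward step fails as you have set it up. A type (i) generator pairs $z=f_{1,L}^N(x)$, a point on a left edge of a rectangle or on the boundary of a left infinite strip, with $w=f_{1,R}^N(x)$, a point on a right edge of a rectangle or on the boundary of a right infinite strip, and $w\notin L_1$. So $f_{1,L}(w)$ is undefined. Your condition ``$f_{1,L}(w)=f_{1,R}(w)$ for image points in $L_1\cap R_1$'' is vacuous, since images of the edge maps avoid $L_1\cap R_1$ away from corners. For the same reason your auxiliary claim that all but finitely many elements of an infinite class lie in $L_1$ is false. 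Along a chain of type (i) generators the points alternate between left-side and right-side points, and classes through corners also contain top/bottom points via type (ii); compare $[(0,0)]_\sim$ in \Cref{fig:integer infinite 2-complex}. The missing move, which is exactly the first line of the paper's proof, is to push forward side by side: $z\sim w$ gives $f_{1,L}(z)=f_{1,L}^{N+1}(x)\sim f_{1,R}^{N+1}(x)=f_{1,R}(w)$. You wrote down this very pair, but then routed it through $f_{1,L}(w)$. Apply $f_{1,L}$ to the left-side members of $[z]_\sim$ and $f_{1,R}$ to the right-side members, using the corresponding one-sided limits of $f_1$ on members produced by type (ii) generators. This sends $[z]_\sim$ into $[f_{1,L}(z)]_\sim$ by a finite-to-one map (\Cref{lemma: finite non-injective}), which gives the forward direction.

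In the converse, you assert without reason that only finitely many relations have $N=1$. The paper argues differently: it shows that infinitely many of the exponents $N_k,J_k$ are at least $2$. If eventually all equal $1$, then $f_{1,R}(x_k)=f_{1,R}(y_k)$, so $x_k=y_k$ off the finitely many non-injective points, and then $w_{2k}=w_{2k+2}$, contradicting distinctness. Your claim can in fact be justified. For a non-corner $x\in X$, injectivity from \Cref{lemma: finite non-injective}, together with the fact that the image of $f_{1,L}$ misses $L_1\cap R_1$, shows that $\{f_{1,L}(x),f_{1,R}(x)\}$ is an entire class. Hence inside an infinite class every $N=1$ relation is based at one of finitely many corners. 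But removing even one relation breaks a chain, so you still have to show that the pulled-back relations link up through $z$. Two facts do this, and they are what your proposed corner analysis should establish. First, an $N=1$ relation $f_{1,L}(x)\sim f_{1,R}(x)$ pulls back to the single point $x$. Second, suppose $f_{1,L}(u)=f_{1,L}(u')=p$ with $u\neq u'$. By the proof of \Cref{lemma: finite non-injective}, $p$ is the left endpoint of the top edge of one horizontal strip and of the bottom edge of the strip directly above it. Then $\{u,u'\}=\{f_{1,T}^{-1}(p),f_{1,B}^{-1}(p)\}$, so $u\sim u'$ by a type (ii) generator with $N=1$. This is exactly how type (ii) relations enter the mixed chains you identified as the main obstacle.
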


\begin{proof}
    
     If $z \sim w$ via part $(i)$ of the definition of $\sim$ for $z \in L_1$ and $w \in R_1$, then $f_{1,L}(z) \sim f_{1,R}(w)$. Hence if $[z]_\sim$ is infinite, so is $[f_{1,L}(z)]_\sim$. 

     On the other hand, if $[f_{1,L}(z)]_\sim$ is infinite, there exists a distinct sequence  
     $$w_0 \sim w_1 \sim w_2 \sim \dots$$
     where $w_0 = f_{1,L}(z)$, each $w_i$ is distinct, and each relation above follows from part (i). Thus for each $k \in \mathbb{Z}_{\geq 0}$ there exists $N_k, J_k \in \mathbb{Z}_{\geq 1}$, $x_k \in L_1$, and $y_k \in R_1$ such that 
     \begin{itemize}
         \item[] $w_{2k}=f_{1,L}^{N_k}(x_k)$,
         \item[] $w_{2k+1}=f_{1,R}^{N_k}(x_k)$,
         \item[] $w_{2k+1}= f_{1,R}^{J_k}(y_k)$, and
         \item[] $w_{2k+2} = f_{1,L}^{J_k}(y_k)$.
     \end{itemize}

     If only finitely many of the $N_k$ or $J_k$ are greater than or equal to $2$, then $f_{1,R}(x_k)=f_{1,R}(y_k)$ for all large enough $k$. By \Cref{lemma: finite non-injective} there are only finitely many points in $R_1$  on which $f_{1,R}$ is non-injective. Thus $x_k = y_k$ for large enough $k$, however this contradicts that the $w_i$ are distinct.

     Therefore, infinitely many of the $N_k$ or $J_k$ are greater than or equal to $2$. Thus we can find a similar sequence of relations between distinct elements in the equivalence class $[z]_\sim$. \end{proof}

Next we further describe the equivalence classes of our equivalence relation $\sim$ on $\Sigma_1$. After this we will be ready to show that, after mild modifications, we obtain an actual surface from the $2$-complex, $\Sigma_2 = \Sigma_1/\sim$. 

\begin{lemma} \label[lemma]{lemma:links}
    Every equivalence class of $\sim$ is discrete. There are finitely many infinite equivalence classes, 
    and the links of these points in $\Sigma_2$ are homeomorphic to either $\mathbb R$ or a countably infinite disjoint union of circles. The finite equivalence classes are either singletons or contain exactly two points, and the links of these points in $\Sigma_2$ are homeomorphic to $\mathbb{S}^1$ or $[0,1]$, respectively. 
\end{lemma}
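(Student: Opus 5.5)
We prove the statement by analyzing the generating relations of $\sim$ directly, splitting according to whether the points involved are corners of strips.

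\textbf{Discreteness.} The first step is discreteness of every equivalence class. Every relation in \Cref{def:equiv} identifies a point $f_{1,L}^N(x)$ with $f_{1,R}^N(x)$ (or the analogous top/bottom pair). By \Cref{lemma: infinite orbit} each orbit of an edge map of $f_1$ is infinite and discrete. Moreover, the relevant points $x\in L_1\cap R_1$ or $T_1\cap B_1$ lie in $\Sigma_0$. The identified points therefore escape into the infinite strips as $N$ grows, at a rate governed by the shift: each period of the orbit shifts by one unit. Consequently only finitely many identified points lie in any compact subset of $\Sigma_1$. It follows that every class, being a subset of this set of identified points, is discrete.

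\textbf{Finite classes.} Next we classify the finite classes. Take a point $z$ that is not a corner of any vertical or horizontal strip. By \Cref{lemma: finite non-injective} the maps $f_{1,L}^N$ and $f_{1,R}^N$ are injective near the relevant points. Hence $z$ is related to at most one other point $w$, namely the unique point with $z=f_{1,L}^N(x)$ and $w=f_{1,R}^N(x)$, and $w$ in turn is related only back to $z$. The point $z$ cannot lie simultaneously on a vertical and a horizontal edge unless it is a corner, so it cannot receive relations of both types (i) and (ii). Thus the class is $\{z\}$ or $\{z,w\}$.
\begin{itemize}
\item If $z$ is in the interior of $\Sigma_1$, or on the boundary but not identified, its link is $\mathbb{S}^1$ or $[0,1]$ respectively.
\item If $z$ is identified with $w$, two half-disks are glued along arcs of $L_1$ and $R_1$ and the link is $\mathbb S^1$. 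If the glued point is an endpoint of an identified arc lying on $\partial\Sigma_1$, the link is $[0,1]$.
\end{itemize}
Matching these up with the cardinalities gives the stated correspondence.

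\textbf{Corner classes.} The remaining classes contain corners, and there are only finitely many corners of strips in $\Sigma_0$. By \Cref{lemma: finite non-injective} each corner has at most one partner under non-injectivity, so only finitely many chains of relations pass through corners. Applying \Cref{lemma:infinite equivalence}, a class is infinite exactly when it contains an $f_{1,\ast}$-image of a class that is itself infinite. Tracing backward, every infinite class is the forward image of one of finitely many corner classes. This yields finitely many infinite classes. The way these classes are described (forward images, unfolding under the shift) is what I expect to recover the two link types, though it needs to be checked against the gluing pattern.

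\textbf{Links at infinite classes (main obstacle).} The hard part is computing the links at the infinite classes. Each point of such a class contributes a local sector: a quarter-disk at a corner, or a half-disk at an edge point. The relations glue these sectors cyclically along their boundary rays, each ray being shared by exactly two sectors. The link is then a graph in which every vertex has degree at most two, with countably many edges. Its components are therefore circles, copies of $\mathbb R$, or arcs. To rule out arcs and finite paths, I would check that each boundary ray of a sector is glued to some other ray; this holds because the orbit is infinite by \Cref{lemma: infinite orbit}, so the chain never terminates.

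It then remains to distinguish the two link types. If the chain of sectors is a single bi-infinite sequence, for example corners chained through alternating type (i) and (ii) relations as in the warm-up $[(0,0)]_\sim$, the link is $\mathbb R$. Otherwise the sectors close up into finitely long cycles, repeated along the orbit, giving a countable disjoint union of circles. I would verify the dichotomy by following the corner chain explicitly, using the choice of initial points and the switch maps to track how sectors close up.
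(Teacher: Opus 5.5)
There is a genuine gap. Your overall architecture matches the paper's (finite classes via injectivity, infinite classes via corner orbits and discreteness, then finiteness and links), but the case split that drives it is wrong. You handle finite classes assuming only that $z$ is not a corner. That hypothesis does not control the points $x$ with $z=f_{1,L}^N(x)$: such an $x$ can itself be a corner, and \Cref{lemma: finite non-injective} explicitly allows a second corner $x'\in L_1\cap R_1$ with $f_{1,L}^N(x')=f_{1,L}^N(x)$. Then $z$ is related to both $f_{1,R}^N(x)$ and $f_{1,R}^N(x')$. These need not coincide, and they are again corner images that can carry further relations.

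This happens at non-corner $z$. Pass from $N$ to $N+1$: if the common image is a horizontal-strip corner in the interior of a left edge of some $Q_j$, its $f_{1,L}$-image is an interior point of a left edge, not a corner, and it still has both partners. Indeed the infinite class $[(0,0)]_\sim$ in \Cref{integers} contains non-corner points on the boundary rays of the strips. So ``$z$ not a corner implies $|[z]_\sim|\le 2$'' is false, and ``the remaining classes contain corners'' is the wrong dichotomy.

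The paper's hypothesis is that $[z]_\sim$ contains no $N$-th edge-map image of a corner for any $N$; this forces every $x$ occurring in a relation to be a non-corner. Even then, injectivity alone does not give a unique partner. From $z_1=f_{1,R}^N(x)=f_{1,R}^{N'}(x')$ with $N'\ge N$ you get $x=f_{1,R}^{N'-N}(x')$. You still need the fact that the image of $f_{1,R}$ meets $L_1\cap R_1$ only in corners to force $N'=N$ and $x'=x$.

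The later steps need the same correction.
\begin{itemize}
\item \textbf{Discreteness.} You claim that only finitely many identified points lie in any compact set. But the identified points contain whole arcs, e.g.\ $f_{1,L}(L_1\cap R_1)$ on the left edges of the rectangles. What is true, and what the paper uses, is that points with infinite class lie on the forward edge-map orbits of the finitely many corners, which is a discrete set by \Cref{lemma: infinite orbit}.
\item \textbf{Finiteness.} Your step does not conclude: forward images of finitely many corner classes under all iterates give infinitely many candidate classes unless you show they coincide. The paper instead uses the bounded fundamental domain for $f_{1,L}$ (left edges minus strip boundaries), which contains only finitely many infinite-class points by discreteness, and propagates with \Cref{lemma:infinite equivalence}.
\item \textbf{Links.} Your degree-$\le 2$ sector graph is a reasonable way to flesh out the paper's one-line link computation. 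However, ``every boundary ray is glued because the orbit is infinite'' is not a valid reason. $\Sigma_2$ has unglued boundary (e.g.\ the unidentified sides of the strips at a corner periodic point), so you need a separate argument that infinite classes avoid it.
\end{itemize}
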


\begin{proof}
    If $z \in \textrm{int}(\Sigma_1)$, then $z$ is not in the image of $f_{1,L}$, $f_{1,R}$, $f_{1,B}^{-1}$, nor $f_{1,T}^{-1}$. Hence $[z]$ is a singleton and has link homeomorphic to $\mathbb{S}^1$. 

    Suppose $z \in \partial \Sigma_1$ and for all $N \in \mathbb{Z}_{\geq0}$, $[z]$ does not contain the $N^{th}$ image of a corner of a vertical or horizontal strip under any edge map. Suppose $z \in L_1$ and 
    $$z = z_0 \sim z_1 \sim z_2$$
    such that each relation above comes from (i) or (ii) in the definition of $\sim$. 
    Since $z_0 \in L_1$ and is not a corner, we have $z_0 \notin T_1 \cup B_1$. Thus $z_0 \sim z_1$ via part (i) of the definition of $\sim$. Hence there is some $x \in L_1 \cap R_1$ and some $N \in \mathbb{Z}_{\geq 1}$ such that $z_0=f_{1,L}^N(x)$ and $z_1=f_{1,R}^N(x)$. 
    Since $z_1$ is also not a corner, the relation $z_1 \sim z_2$ also follows from part (i) of the definition of $\sim$. Hence there is some $x' \in L_1 \cap R_1$ and some $N' \in \mathbb{Z}_{\geq 1}$ such that $z_1=f_{1,R}^{N'}(x')$ and $z_2=f_{1,L}^{N'}(x')$. 
    Without loss of generality, suppose $N' \geq N$. Then
    \begin{align*}
        z_1 & = f_{1,R}^N(x)  = f_{1,R}^{N'}(x') \\
        & = f_{1,R}^N(f_{1,R}^{(N'-N)}(x'))
    \end{align*}
    Since $x$ is not a corner point, $f^N_{1,R}$ is injective at $x$ by \Cref{lemma: finite non-injective}. Hence $x=f_{1,R}^{N'-N}(x')$. Since $x \in L_1 \cap R_1$ and the image of $f_{1,R}$ lies in $R_1-(L_1 - $ corner points$)$, we must have $N'=N$, and hence $x'=x$. Thus $z_2=z_0$. Therefore, $[z]=\{z,z_1\}$ or $[z]=\{z\}$ and, by construction, has link $\mathbb S^1$ or $[0,1]$ , respectively. 

    Since there are only finitely many corners of the horizontal and vertical strips and by \Cref{lemma: infinite orbit} the orbits of the edge maps are discrete, there is a discrete set of points in $\Sigma_1$ whose equivalence class is infinite and each infinite equivalence class is a discrete set.
    
    Observe that the left edges in $\Sigma_1$ minus the boundaries of the infinite strips, form a bounded fundamental domain for $f_{1,L}$. By discreteness, this domain contains only finitely many points with infinite equivalence class. It follows from \Cref{lemma:infinite equivalence} that there are therefore only finitely many infinite equivalence classes in total. Moreover, discreteness implies that the link in $\Sigma_2$ of each infinite equivalence class is homeomorphic either to $\mathbb{R}$ or to a countable disjoint union of circles. \end{proof}

    When $M$ is a primitive matrix, to build the \textit{surface corresponding to $M$}, denoted $\Sigma$: replace each point $x \in \Sigma_2$ whose link is homeomorphic to $\mathbb{R}$ or a countably infinite disjoint union of circles with link$(x)$ to form $\Sigma_2'$. Next, double $\Sigma_2'$ along its boundary to form $\Sigma$.  When $M$ is irreducible but not primitive, we replace each point whose link is $\mathbb{R}$  or a countably infinite disjoint union of circles with its link, and then perform a slightly modified doubling operation (which ensures connectedness) to build $\Sigma$. See Section \ref{weak perron} for a discussion of the irreducible but not primitive case. 

\begin{proposition}
    $\Sigma$ is a surface. 
\end{proposition}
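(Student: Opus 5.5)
The plan is to verify the local criterion recalled in \Cref{integers}: a 2-complex is a surface with boundary exactly when the link of every point is homeomorphic to $\mathbb{S}^1$ or $[0,1]$. Then check that doubling along the boundary gives a surface without boundary.

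First, the finite equivalence classes of $\sim$. By \Cref{lemma:links}, every point of $\Sigma_2$ coming from a finite class is a singleton or a pair, with link $\mathbb{S}^1$ or $[0,1]$. Such points already have surface or half-plane neighborhoods in $\Sigma_2$. Because \Cref{lemma:links} says every class is discrete and there are only finitely many infinite classes, these neighborhoods can be chosen small enough to avoid the infinite classes. I would check one further point here: $\Sigma_2$ is Hausdorff near these points. This should follow from the discreteness of the classes together with the fact that the generating identifications are piecewise linear homeomorphisms between boundary segments, by \Cref{lemma: finite non-injective}.

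Second, the infinite classes, which I expect to be the main obstacle. Let $x\in\Sigma_2$ be such a class, with link $\mathbb{R}$ or $\bigsqcup_{\mathbb{N}}\mathbb{S}^1$. A neighborhood of $x$ is the open cone on its link. Replacing $x$ by its link means removing the cone point and compactifying nothing else, so the neighborhood becomes $\mathrm{link}(x)\times(0,\epsilon)$ with one end open. More precisely, I would take the closed cone, delete the apex, and glue in $\mathrm{link}(x)\times\{0\}$ as a new boundary. The result is locally $\mathbb{R}\times[0,\epsilon)$ or $\mathbb{S}^1\times[0,\epsilon)$, and in both cases every point has link $[0,1]$ or $\mathbb{S}^1$. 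The subtle points are these:
\begin{itemize}
\item Only finitely many such replacements occur (\Cref{lemma:links}), so local finiteness is preserved.
\item The link is the one computed from the discrete class. Concretely, the infinitely many corners in the class are glued cyclically or linearly into a chain of half-disk sectors, and the chain is infinite because the class is infinite. This gives $\mathbb{R}$ when the chain is a line, and circles otherwise.
\item Since the class accumulates only at infinity in the strips (\Cref{lemma: infinite orbit}), the sectors leave every compact set. The replaced space is therefore still locally compact and second countable.
\end{itemize}
Thus $\Sigma_2'$ is a surface with boundary.

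Third, the doubling. The double of a surface with boundary along its boundary is a surface without boundary: each boundary point has a half-disk neighborhood, and it doubles to a disk. In the irreducible non-primitive case the modified doubling of \Cref{weak perron} still glues boundary to boundary by a homeomorphism, so the same local argument applies. Hence $\Sigma$ is a surface.
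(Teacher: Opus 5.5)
Your proposal is correct and follows the paper's proof: use \Cref{lemma:links} to see that all but finitely many points of $\Sigma_2$ already have disk or half-disk neighborhoods, replace the finitely many infinite classes by their links to create new boundary, and then double; you are also more careful than the paper, which only treats the $\mathbb{R}$-link case at the replacement step and does not discuss the point-set issues you flag. One small imprecision: in the non-primitive case the gluing $A_i\to A_i'$, $B_i\to B_{i+1}'$ is not a single boundary homeomorphism, because it identifies all the corner points $x_i$ and their doubles to one point, but there the $2k$ half-disks close up cyclically into a disk, so your conclusion still holds.
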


\begin{proof} By \Cref{lemma:links} there are only finitely many points at which $\Sigma_2$ fails to be a surface with boundary because they have links homeomorphic to either $\mathbb R$ or a countably infinite disjoint union of circles. The remaining points of $\Sigma_2$ are either interior points with neighborhoods homeomorphic to a disk (if their link is $\mathbb S^1$) or they are boundary points with neighborhoods homeomorphic to a half disk (if their link is $[0,1]$). When building $\Sigma$ from $\Sigma_2$, the finitely many points of $\Sigma_2$ with links homeomorphic to $\mathbb R$ are first replaced by their link. This produces a surface with boundary since all points which previously had link $\mathbb R$ are now boundary points with link $[0,1]$. We then double along the boundary to obtain the closed surface $\Sigma$. \end{proof}

\begin{definition} \label[definition]{def:final-foliations}
Note that the vertical and horizontal foliations $\mathcal{V}_0, \mathcal H_0$ on $\Sigma_0$ admit natural extensions to a pair of singular, transverse, measured foliations, $\mathcal{V}_2, \mathcal{H}_2$, on $\Sigma_2$, where the extensions of $\mathcal V_0$ and $\mathcal H_0$ to the infinite strips are induced by the identifications of $\sim$. See \Cref{fig:integer foliations} for an illustration. Note that the set of leaves of $\mathcal{V}_2$ and $\mathcal{H}_2$ that intersect points with infinite equivalence class has measure zero. We therefore define the pair of singular, transverse, measured foliations $\mathcal{V}$ and $\mathcal{H}$ on $\Sigma$ to be the foliations induced by $\mathcal{V}_2$ and $\mathcal{H}_2$, after removing this measure zero set of leaves.\end{definition}

\begin{figure}[htbp]
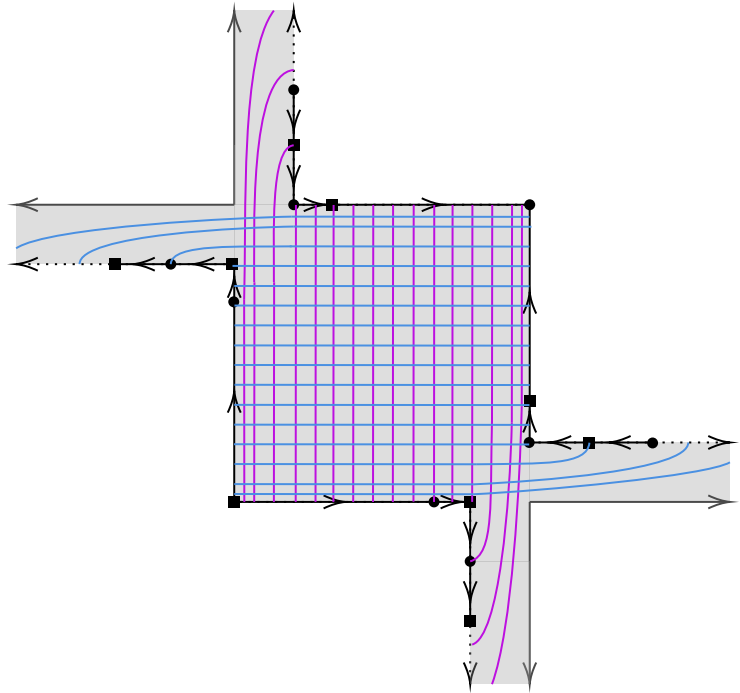

\begin{center}

\tikzset{every picture/.style={line width=0.75pt}} 


\end{center}
\caption{The horizontal and vertical foliations $\mathcal{V}_2$ and $\mathcal{H}_2$ on $\Sigma_2$ from \Cref{integers}.}
    \label{fig:integer foliations}
\end{figure}



\section{Ensuring $\Sigma$ is Infinite Type}\label{infinite-type}

It is possible that the surface $\Sigma$ constructed in the previous sections has finite-type and $f:\Sigma \rightarrow \Sigma$ is a pseudo-Anosov homeomorphism with stretch factor equal to $\lambda$. It would be interesting to understand precisely when this happens. However, we should note that it is only possible in very restrictive cases. For example, if the eigenvalue $\lambda$ is not a bi-Perron algebraic unit, then $\Sigma$ cannot be finite type, since every pseudo-Anosov stretch factor is bi-Perron. 

In order to ensure that every eigenvalue $\lambda$ can arise as an end-periodic stretch factor, we show that a mild alteration of our construction always yields an infinite-type surface. To do this, we first show that there is at least one corner which is periodic under the edge maps of $f_0$. Then, we insert handles along the invariant set of external lines appearing from the unidentified sides of the two infinite strips glued along the two neighborhoods of the corner periodic point.

\begin{proposition}\label[proposition]{corner periodic point}
    For any irreducible matrix $M$, there exist bijections $\{\sigma_i\}_{i=1}^n$ and $\{\tau_i\}_{i=1}^n$ such that the top left corner of $Q_1$ is a periodic point of $f_L$. 
\end{proposition}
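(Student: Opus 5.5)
The plan is to read off how $f_L$ acts on top left corners directly from the definition of the piece map, and then to choose the bijections so that the top left corner of $Q_1$ is carried around a cycle of rectangles and back to itself.

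\textbf{Where $f_L$ sends a top left corner.} By definition, $f_L$ maps $\text{Left edge}(\tau_k^{-1}(V^{(k)}_{i,j}))$ linearly onto $\text{Left edge}(\sigma_i(H^{(i)}_{k,j}))$, and it preserves top and bottom. Hence it sends the top left corner of the vertical strip $\tau_k^{-1}(V^{(k)}_{i,j})$ to the top left corner of the horizontal strip $\sigma_i(H^{(i)}_{k,j})$. The top left corner of $Q_k$ is the top left corner of the leftmost vertical strip $V^{(k)}_{\text{min}}$. Likewise, the top left corner of $Q_i$ is the top left corner of the topmost horizontal strip $H^{(i)}_{\text{min}}$. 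So suppose we arrange
\[
\tau_k^{-1}(V^{(k)}_{i,j}) = V^{(k)}_{\text{min}} \quad\text{and}\quad \sigma_i(H^{(i)}_{k,j}) = H^{(i)}_{\text{min}}
\]
for some index $j$, which requires $m_{ik}>0$. Then $f_L$ sends the top left corner of $Q_k$ to the top left corner of $Q_i$.

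\textbf{Choosing a cycle.} Since $M$ is irreducible, the graph associated to $M$ has a directed cycle through $v_1$. Take a shortest such cycle; it is simple, so its vertices are distinct. Write it as
\[
v_1 = v_{k_0} \to v_{k_1} \to \cdots \to v_{k_{p-1}} \to v_{k_p} = v_{k_0},
\]
so that $m_{k_{t+1}k_t} > 0$ for each $t$. For $n=1$ this is just the loop at $v_1$, which exists because $m_{11}\ge 1$. For each $0 \le t \le p-1$ we then do two things.
\begin{itemize}
\item Choose $\tau_{k_t}$ so that the strip $\tau_{k_t}^{-1}(V^{(k_t)}_{k_{t+1},1})$ is the leftmost vertical strip of $Q_{k_t}$.
\item Choose $\sigma_{k_{t+1}}$ so that the strip $\sigma_{k_{t+1}}(H^{(k_{t+1})}_{k_t,1})$ is the topmost horizontal strip of $Q_{k_{t+1}}$.
\end{itemize}
All remaining $\sigma_i$ and $\tau_i$ are chosen arbitrarily, and the strips are resized as prescribed in \Cref{linear map}.

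\textbf{Consistency and conclusion.} The key point is that these requirements never conflict. Because the cycle is simple, each $k_t$ is the source of exactly one cycle edge, so only one requirement is placed on $\tau_{k_t}$. Each $k_{t+1}$ is the target of exactly one cycle edge, so only one requirement is placed on $\sigma_{k_{t+1}}$. Each requirement only specifies which single strip is placed first, which any bijection of the set of strips can achieve. Resizing does not change which strip is leftmost or topmost.

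By the first step, $f_L$ maps the top left corner of $Q_{k_t}$ to the top left corner of $Q_{k_{t+1}}$ for every $t$. Therefore $f_L^p$ fixes the top left corner of $Q_1$, so it is a periodic point of $f_L$.

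\textbf{Main obstacle.} I expect the only delicate point to be the index bookkeeping. One must check that the strip labelled $(i,j)$ in $Q_k$ on the vertical side and the strip labelled $(k,j)$ in $Q_i$ on the horizontal side really correspond under $f_0$, so that the direction of the edge in the graph of $M$ matches the direction in which $f_L$ moves the corner. If that convention turns out reversed, the fix is to traverse the cycle in the opposite direction.
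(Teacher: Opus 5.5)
Your proposal is correct and is essentially the paper's proof: you take a simple cycle through $v_1$ in the graph of $M$, choose each $\tau_{k_t}$ so that $\tau_{k_t}(V^{(k_t)}_{\text{min}}) = V^{(k_t)}_{k_{t+1},1}$ and each $\sigma_{k_{t+1}}$ so that $\sigma_{k_{t+1}}(H^{(k_{t+1})}_{k_t,1}) = H^{(k_{t+1})}_{\text{min}}$ (these requirements are consistent because the cycle's vertices are distinct), and observe that $f_L$ then carries the top left corner of $Q_{k_t}$ to that of $Q_{k_{t+1}}$. Your index convention is already the right one: an edge $v_k \to v_i$ means $m_{ik}>0$, which is exactly when $V^{(k)}_{i,1}$ and $H^{(i)}_{k,1}$ exist, so no reversal of the cycle is needed.
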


\begin{proof}
    Since $M$ is irreducible, there exists an embedded cycle $C$ in the digraph associated to $M$ which contains the vertex $v_1$. Denote the (distinct) indices of this cycle in order by 
    $$1=\Sigma_1, \Sigma_2, s_3, \dots, s_k.$$
    By definition of the associated digraph, we must have $m_{s_{j+1},s_{j}} \neq 0$ for each $j$ (taking addition modulo $k$ in the indices of the $s_i$.). Thus there is a vertical strip $V^{(s_j)}_{s_{j+1},1}$ in $V$ and a horizontal strip $H^{(s_{j+1})}_{s_j,1}$ in $H$ for every $j$.

    Now, choose a set of bijections $\{\tau_i\}_{i=1}^n$ such that
    $$\tau_{s_j}(V^{(s_j)}_{\text{min}}) = V^{(s_j)}_{s_{j+1},1},$$
    for every $1 \leq j \leq k$, again taking addition modulo $k$.
    
    Further, choose a set of bijections $\{\sigma_i\}_{i=1}^n$ such that
    $$\sigma_{s_{j+1}}(H^{(s_{j+1})}_{s_{j},1}) = H^{(s_{j+1})}_{\text{min}},$$
    for every $0 \leq j \leq k-1$, again taking addition modulo $k$ in the indices of the $s_i$. Since the $s_j$ are distinct, there exist sets $\{\tau_i\}_{i=1}^n$ and $\{\sigma_i\}_{i=1}^n$ of bijections for which this is true. \\

    Let $x_j$ be the top left corner of $Q_{s_j}$ for each $1 \leq j \leq k$. Now $f_L(x_j)$ is the top endpoint of $f_L(\text{Left edge}(V^{(s_j)}_\text{min}))$. Since $\tau_{s_j}(V^{(s_j)}_{\text{min}}) = V^{(s_j)}_{s_{j+1},1}$, we have
    \begin{align*}
        f_L(\text{Left edge}(V^{(s_j)}_\text{min})) & = \text{Left edge}(\sigma_{s_{j+1}}(H^{(s_{j+1})}_{s_j,1}))\\
        & = \text{Left edge}(H^{(s_{j+1})}_{\text{min}})).
    \end{align*}
    Thus $f_L(x_j)$ is the top left corner of $Q_{s_{j+1}}$, which is $x_{j+1}$. Hence $f^{k}_L(x_1)=x_{1}$.  
\end{proof}

\begin{example}[Running Example]\label{example: run ex new order} Consider the cycle $v_1 \rightarrow v_2 \rightarrow v_4 \rightarrow v_4 \rightarrow v_1$ in the digraph corresponding to $M$. The proof of the above proposition implies if $\{\tau_i\}_{i=1}^4$ and $\{\sigma_i\}_{i=1}^4$ satisfy:
\begin{align*}
    \tau_1(V_{2,1}^{(1)}) & = V_{2,1}^{(1)}, & \tau_2(V_{4,1}^{(2)}) & = V_{4,1}^{(2)}, & \tau_3(V_{1,1}^{(3)}) & = V_{1,1}^{(3)}, & \tau_4(V_{2,1}^{(4)}) & = V_{3,1}^{(4)}
\end{align*}
and 
\begin{align*}
    \sigma_1(H_{3,1}^{(1)})& = H_{3,1}^{(1)}, & \sigma_2(H_{1,1}^{(2)})& = H_{1,1}^{(2)}, & \sigma_3(H_{4,1}^{(3)})& = H_{4,1}^{(3)}, & \sigma_4(H_{2,1}^{(4)})& = H_{1,1}^{(4)}
\end{align*}
then $x_1=$ the top left corner of $Q_1$ is periodic under $f_L$. \\

\begin{figure}[htbp]
\begin{center}

\tikzset{every picture/.style={line width=0.75pt}} 

\begin{tikzpicture}[x=0.75pt,y=0.75pt,yscale=-1,xscale=1]

\draw  [fill={rgb, 255:red, 207; green, 132; blue, 221 }  ,fill opacity=0.58 ][dash pattern={on 0.84pt off 2.51pt}] (88.29,64) -- (168.95,64) -- (168.95,107.39) -- (88.29,107.39) -- cycle ;
\draw  [fill={rgb, 255:red, 216; green, 79; blue, 96 }  ,fill opacity=0.57 ][dash pattern={on 0.84pt off 2.51pt}] (187.58,42.31) -- (270.07,42.31) -- (270.07,107.39) -- (187.58,107.39) -- cycle ;
\draw  [fill={rgb, 255:red, 74; green, 144; blue, 226 }  ,fill opacity=0.58 ][dash pattern={on 0.84pt off 2.51pt}] (289.38,52.52) -- (350.73,52.52) -- (350.73,104.84) -- (289.38,104.84) -- cycle ;
\draw  [fill={rgb, 255:red, 248; green, 231; blue, 28 }  ,fill opacity=0.39 ][dash pattern={on 0.84pt off 2.51pt}] (371.84,27) -- (442.53,27) -- (442.53,108.66) -- (371.84,108.66) -- cycle ;
\draw  [draw opacity=0][fill={rgb, 255:red, 197; green, 69; blue, 223 }  ,fill opacity=0.65 ][dash pattern={on 0.84pt off 2.51pt}] (88.29,64) -- (136.01,64) -- (136.01,107.39) -- (88.29,107.39) -- cycle ;
\draw  [draw opacity=0][fill={rgb, 255:red, 230; green, 60; blue, 80 }  ,fill opacity=0.67 ][dash pattern={on 0.84pt off 2.51pt}] (187.58,42.31) -- (228.03,42.31) -- (228.03,107.39) -- (187.58,107.39) -- cycle ;
\draw  [fill={rgb, 255:red, 74; green, 144; blue, 226 }  ,fill opacity=0.57 ][dash pattern={on 0.84pt off 2.51pt}] (88.06,294.96) -- (168.95,294.96) -- (168.95,338.34) -- (88.06,338.34) -- cycle ;
\draw  [draw opacity=0][fill={rgb, 255:red, 248; green, 231; blue, 28 }  ,fill opacity=0.36 ][dash pattern={on 0.84pt off 2.51pt}] (187.58,298.79) -- (271.2,298.79) -- (271.2,338.34) -- (187.58,338.34) -- cycle ;
\draw  [draw opacity=0][fill={rgb, 255:red, 189; green, 16; blue, 224 }  ,fill opacity=0.62 ][dash pattern={on 0.84pt off 2.51pt}] (187.58,271.99) -- (270.07,271.99) -- (270.07,298.79) -- (187.58,298.79) -- cycle ;
\draw  [fill={rgb, 255:red, 248; green, 231; blue, 28 }  ,fill opacity=0.79 ][dash pattern={on 0.84pt off 2.51pt}] (292.79,282.2) -- (351.87,282.2) -- (351.87,339.62) -- (292.79,339.62) -- cycle ;
\draw    (56.48,97.44) .. controls (13.52,183.77) and (13.31,326.95) .. (56.95,423.91) ;
\draw [shift={(57.61,425.37)}, rotate = 245.44] [color={rgb, 255:red, 0; green, 0; blue, 0 }  ][line width=0.75]    (10.93,-3.29) .. controls (6.95,-1.4) and (3.31,-0.3) .. (0,0) .. controls (3.31,0.3) and (6.95,1.4) .. (10.93,3.29)   ;
\draw  [draw opacity=0][fill={rgb, 255:red, 248; green, 231; blue, 28 }  ,fill opacity=0.81 ][dash pattern={on 0.84pt off 2.51pt}] (371.84,27) -- (397.31,27) -- (397.31,108.66) -- (371.84,108.66) -- cycle ;
\draw  [dash pattern={on 0.84pt off 2.51pt}]  (397.31,27) -- (397.31,108.66) ;
\draw  [dash pattern={on 0.84pt off 2.51pt}] (187.58,271.99) -- (270.07,271.99) -- (270.07,338.34) -- (187.58,338.34) -- cycle ;
\draw  [dash pattern={on 0.84pt off 2.51pt}]  (270.07,298.79) -- (187.58,298.79) ;
\draw  [fill={rgb, 255:red, 207; green, 132; blue, 221 }  ,fill opacity=0.58 ][dash pattern={on 0.84pt off 2.51pt}] (88.29,176.29) -- (168.95,176.29) -- (168.95,219.68) -- (88.29,219.68) -- cycle ;
\draw  [fill={rgb, 255:red, 216; green, 79; blue, 96 }  ,fill opacity=0.57 ][dash pattern={on 0.84pt off 2.51pt}] (187.58,154.6) -- (270.07,154.6) -- (270.07,219.68) -- (187.58,219.68) -- cycle ;
\draw  [fill={rgb, 255:red, 74; green, 144; blue, 226 }  ,fill opacity=0.58 ][dash pattern={on 0.84pt off 2.51pt}] (289.38,164.81) -- (350.73,164.81) -- (350.73,217.12) -- (289.38,217.12) -- cycle ;
\draw  [fill={rgb, 255:red, 248; green, 231; blue, 28 }  ,fill opacity=0.39 ][dash pattern={on 0.84pt off 2.51pt}] (371.84,139.29) -- (442.53,139.29) -- (442.53,220.95) -- (371.84,220.95) -- cycle ;
\draw  [draw opacity=0][fill={rgb, 255:red, 197; green, 69; blue, 223 }  ,fill opacity=0.65 ][dash pattern={on 0.84pt off 2.51pt}] (88.29,176.29) -- (136.01,176.29) -- (136.01,219.68) -- (88.29,219.68) -- cycle ;
\draw  [draw opacity=0][fill={rgb, 255:red, 230; green, 60; blue, 80 }  ,fill opacity=0.67 ][dash pattern={on 0.84pt off 2.51pt}] (187.58,154.6) -- (228.03,154.6) -- (228.03,219.68) -- (187.58,219.68) -- cycle ;
\draw  [draw opacity=0][fill={rgb, 255:red, 248; green, 231; blue, 28 }  ,fill opacity=0.81 ][dash pattern={on 0.84pt off 2.51pt}] (397.31,139.29) -- (442.43,139.29) -- (442.43,220.95) -- (397.31,220.95) -- cycle ;
\draw  [dash pattern={on 0.84pt off 2.51pt}]  (397.31,138.01) -- (397.31,219.68) ;
\draw  [fill={rgb, 255:red, 74; green, 144; blue, 226 }  ,fill opacity=0.57 ][dash pattern={on 0.84pt off 2.51pt}] (88.06,416.18) -- (168.95,416.18) -- (168.95,459.56) -- (88.06,459.56) -- cycle ;
\draw  [draw opacity=0][fill={rgb, 255:red, 248; green, 231; blue, 28 }  ,fill opacity=0.4 ][dash pattern={on 0.84pt off 2.51pt}] (187.58,420.01) -- (271.2,420.01) -- (271.2,459.56) -- (187.58,459.56) -- cycle ;
\draw  [draw opacity=0][fill={rgb, 255:red, 189; green, 16; blue, 224 }  ,fill opacity=0.6 ][dash pattern={on 0.84pt off 2.51pt}] (187.58,393.21) -- (270.07,393.21) -- (270.07,420.01) -- (187.58,420.01) -- cycle ;
\draw  [fill={rgb, 255:red, 248; green, 231; blue, 28 }  ,fill opacity=0.8 ][dash pattern={on 0.84pt off 2.51pt}] (292.79,403.42) -- (351.87,403.42) -- (351.87,460.84) -- (292.79,460.84) -- cycle ;
\draw  [dash pattern={on 0.84pt off 2.51pt}] (187.58,393.21) -- (270.07,393.21) -- (270.07,459.56) -- (187.58,459.56) -- cycle ;
\draw  [dash pattern={on 0.84pt off 2.51pt}]  (270.07,420.01) -- (187.58,420.01) ;
\draw  [draw opacity=0][fill={rgb, 255:red, 189; green, 16; blue, 224 }  ,fill opacity=0.38 ][dash pattern={on 0.84pt off 2.51pt}] (371.84,259.23) -- (442.53,259.23) -- (442.53,286.79) -- (371.84,286.79) -- cycle ;
\draw  [draw opacity=0][fill={rgb, 255:red, 208; green, 2; blue, 27 }  ,fill opacity=0.7 ][dash pattern={on 0.84pt off 2.51pt}] (371.84,286.79) -- (442.53,286.79) -- (442.53,308.49) -- (371.84,308.49) -- cycle ;
\draw  [draw opacity=0][fill={rgb, 255:red, 208; green, 2; blue, 27 }  ,fill opacity=0.42 ][dash pattern={on 0.84pt off 2.51pt}] (371.84,308.49) -- (442.53,308.49) -- (442.53,340.32) -- (371.84,340.32) -- cycle ;
\draw  [dash pattern={on 0.84pt off 2.51pt}] (371.84,259.23) -- (442.53,259.23) -- (442.53,340.32) -- (371.84,340.32) -- cycle ;
\draw  [dash pattern={on 0.84pt off 2.51pt}]  (442.53,286.79) -- (371.84,286.79) ;
\draw  [dash pattern={on 0.84pt off 2.51pt}]  (442.53,308.49) -- (371.84,308.49) ;
\draw  [draw opacity=0][fill={rgb, 255:red, 189; green, 16; blue, 224 }  ,fill opacity=0.38 ][dash pattern={on 0.84pt off 2.51pt}] (372.97,407.22) -- (443.67,407.22) -- (443.67,429.71) -- (372.97,429.71) -- cycle ;
\draw  [draw opacity=0][fill={rgb, 255:red, 208; green, 2; blue, 27 }  ,fill opacity=0.7 ][dash pattern={on 0.84pt off 2.51pt}] (372.97,380.45) -- (443.67,380.45) -- (443.67,408.01) -- (372.97,408.01) -- cycle ;
\draw  [draw opacity=0][fill={rgb, 255:red, 208; green, 2; blue, 27 }  ,fill opacity=0.42 ][dash pattern={on 0.84pt off 2.51pt}] (372.97,429.71) -- (443.67,429.71) -- (443.67,461.54) -- (372.97,461.54) -- cycle ;
\draw  [dash pattern={on 0.84pt off 2.51pt}] (372.97,380.45) -- (443.67,380.45) -- (443.67,461.54) -- (372.97,461.54) -- cycle ;
\draw  [dash pattern={on 0.84pt off 2.51pt}]  (443.67,408.01) -- (372.97,408.01) ;
\draw  [dash pattern={on 0.84pt off 2.51pt}]  (443.67,429.71) -- (372.97,429.71) ;
\draw    (78.29,124.92) .. controls (70.58,138.9) and (72.17,144.79) .. (77.59,157.93) ;
\draw [shift={(78.29,159.61)}, rotate = 247.4] [color={rgb, 255:red, 0; green, 0; blue, 0 }  ][line width=0.75]    (10.93,-3.29) .. controls (6.95,-1.4) and (3.31,-0.3) .. (0,0) .. controls (3.31,0.3) and (6.95,1.4) .. (10.93,3.29)   ;
\draw    (78.18,243.86) .. controls (70.48,257.85) and (72.06,263.74) .. (77.48,276.87) ;
\draw [shift={(78.18,278.56)}, rotate = 247.4] [color={rgb, 255:red, 0; green, 0; blue, 0 }  ][line width=0.75]    (10.93,-3.29) .. controls (6.95,-1.4) and (3.31,-0.3) .. (0,0) .. controls (3.31,0.3) and (6.95,1.4) .. (10.93,3.29)   ;
\draw    (80.13,358.98) .. controls (72.43,372.97) and (74.01,378.86) .. (79.43,391.99) ;
\draw [shift={(80.13,393.67)}, rotate = 247.4] [color={rgb, 255:red, 0; green, 0; blue, 0 }  ][line width=0.75]    (10.93,-3.29) .. controls (6.95,-1.4) and (3.31,-0.3) .. (0,0) .. controls (3.31,0.3) and (6.95,1.4) .. (10.93,3.29)   ;
\draw  [dash pattern={on 0.84pt off 2.51pt}]  (228.03,154.6) -- (228.03,219.68) ;
\draw  [dash pattern={on 0.84pt off 2.51pt}]  (228.03,42.31) -- (228.03,107.39) ;
\draw  [dash pattern={on 0.84pt off 2.51pt}]  (136.01,64) -- (136.01,107.39) ;
\draw  [dash pattern={on 0.84pt off 2.51pt}]  (136.01,176.29) -- (136.01,219.68) ;

\draw (3.2,245.74) node [anchor=north west][inner sep=0.75pt]    {$f_{0}$};
\draw (100.92,76.54) node [anchor=north west][inner sep=0.75pt]  [font=\scriptsize]  {$V_{2,1}^{( 1)}$};
\draw (140.68,76.54) node [anchor=north west][inner sep=0.75pt]  [font=\scriptsize]  {$V_{4,1}^{( 1)}$};
\draw (373.67,59.96) node [anchor=north west][inner sep=0.75pt]  [font=\scriptsize]  {$V_{2,1}^{( 4)}$};
\draw (218.01,313.88) node [anchor=north west][inner sep=0.75pt]  [font=\scriptsize]  {$H_{4,1}^{( 2)}$};
\draw (196.35,62.51) node [anchor=north west][inner sep=0.75pt]  [font=\scriptsize]  {$V_{4,1}^{( 2)}$};
\draw (237.25,62.51) node [anchor=north west][inner sep=0.75pt]  [font=\scriptsize]  {$V_{4,2}^{( 2)}$};
\draw (311.1,68.89) node [anchor=north west][inner sep=0.75pt]  [font=\scriptsize]  {$V_{1,1}^{( 3)}$};
\draw (219.14,276.88) node [anchor=north west][inner sep=0.75pt]  [font=\scriptsize]  {$H_{1,1}^{( 2)}$};
\draw (119.17,307.5) node [anchor=north west][inner sep=0.75pt]  [font=\scriptsize]  {$H_{3,1}^{( 1)}$};
\draw (311.17,299.84) node [anchor=north west][inner sep=0.75pt]  [font=\scriptsize]  {$H_{4,1}^{( 3)}$};
\draw (412.22,61.23) node [anchor=north west][inner sep=0.75pt]  [font=\scriptsize]  {$V_{3,1}^{( 4)}$};
\draw (100.92,188.83) node [anchor=north west][inner sep=0.75pt]  [font=\scriptsize]  {$V_{2,1}^{( 1)}$};
\draw (140.68,188.83) node [anchor=north west][inner sep=0.75pt]  [font=\scriptsize]  {$V_{4,1}^{( 1)}$};
\draw (373.67,172.24) node [anchor=north west][inner sep=0.75pt]  [font=\scriptsize]  {$V_{2,1}^{( 4)}$};
\draw (196.35,174.8) node [anchor=north west][inner sep=0.75pt]  [font=\scriptsize]  {$V_{4,1}^{( 2)}$};
\draw (237.25,174.8) node [anchor=north west][inner sep=0.75pt]  [font=\scriptsize]  {$V_{4,2}^{( 2)}$};
\draw (311.1,181.18) node [anchor=north west][inner sep=0.75pt]  [font=\scriptsize]  {$V_{1,1}^{( 3)}$};
\draw (409.94,173.52) node [anchor=north west][inner sep=0.75pt]  [font=\scriptsize]  {$V_{3,1}^{( 4)}$};
\draw (218.01,435.1) node [anchor=north west][inner sep=0.75pt]  [font=\scriptsize]  {$H_{4,1}^{( 2)}$};
\draw (219.14,398.1) node [anchor=north west][inner sep=0.75pt]  [font=\scriptsize]  {$H_{1,1}^{( 2)}$};
\draw (119.17,428.72) node [anchor=north west][inner sep=0.75pt]  [font=\scriptsize]  {$H_{3,1}^{( 1)}$};
\draw (311.17,421.06) node [anchor=north west][inner sep=0.75pt]  [font=\scriptsize]  {$H_{4,1}^{( 3)}$};
\draw (397.52,264.12) node [anchor=north west][inner sep=0.75pt]  [font=\scriptsize]  {$H_{1,1}^{( 4)}$};
\draw (397.52,289.64) node [anchor=north west][inner sep=0.75pt]  [font=\scriptsize]  {$H_{2,1}^{( 4)}$};
\draw (398.16,317.07) node [anchor=north west][inner sep=0.75pt]  [font=\scriptsize]  {$H_{2,2}^{( 4)}$};
\draw (398.65,386.61) node [anchor=north west][inner sep=0.75pt]  [font=\scriptsize]  {$H_{1,1}^{( 4)}$};
\draw (398.65,410.86) node [anchor=north west][inner sep=0.75pt]  [font=\scriptsize]  {$H_{2,1}^{( 4)}$};
\draw (400.36,442.12) node [anchor=north west][inner sep=0.75pt]  [font=\scriptsize]  {$H_{2,2}^{( 4)}$};
\draw (59.34,128.88) node [anchor=north west][inner sep=0.75pt]    {$\tau $};
\draw (55.85,247.83) node [anchor=north west][inner sep=0.75pt]    {$w$};
\draw (62.32,362.95) node [anchor=north west][inner sep=0.75pt]    {$\sigma $};

\end{tikzpicture}
\end{center}
\caption{\textbf{Running Example:} The piece map $f_0:V \rightarrow H$ corresponding to $M$ from \Cref{example: run ex} with maps $\sigma$ and $\tau$ chosen as in \Cref{corner periodic point} to ensure the top left corner of $Q_1$, denoted $x_1$, is a periodic point of the edge maps. Indeed, $f_L^4(x_1) = x_1$. }
\label{fig:run ex new order}
\end{figure}
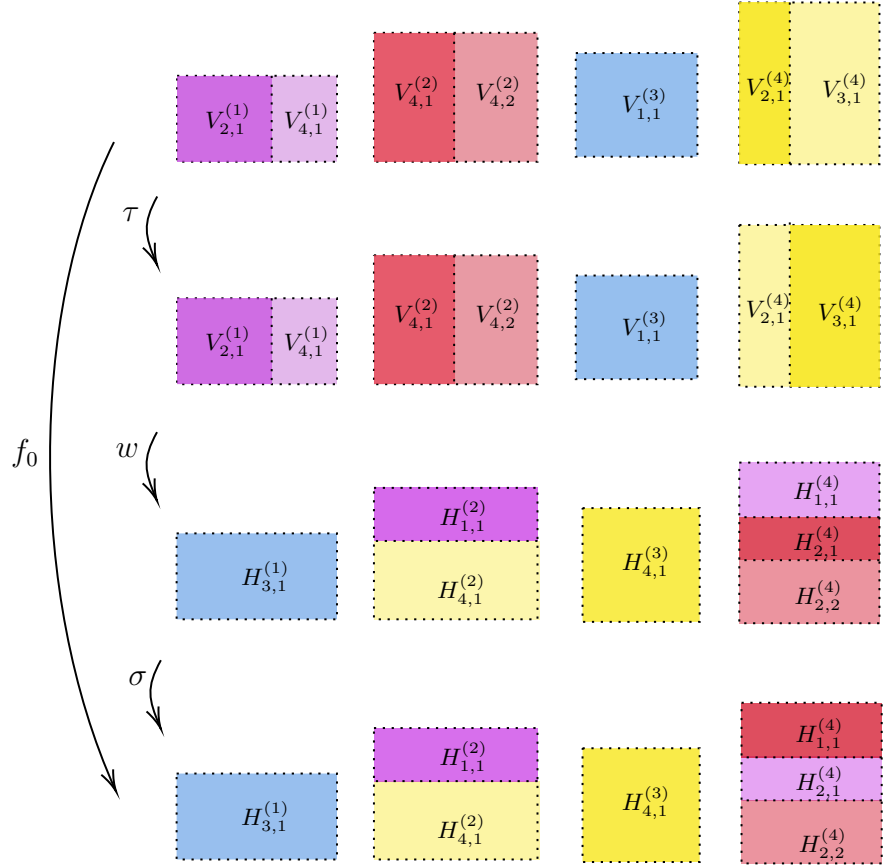

Observe that after changing $\tau$ and $\sigma$, we need to adjust the widths and heights of the strips so that $f_0$ still stretches horizontally by $\lambda$ and contracts vertically by $1/\lambda$. The heights and widths of the whole rectangles remain the same. The four digraphs change as well, as seen in Figure \ref{fig: digraph after re-ordering}.

\begin{figure}[htbp]
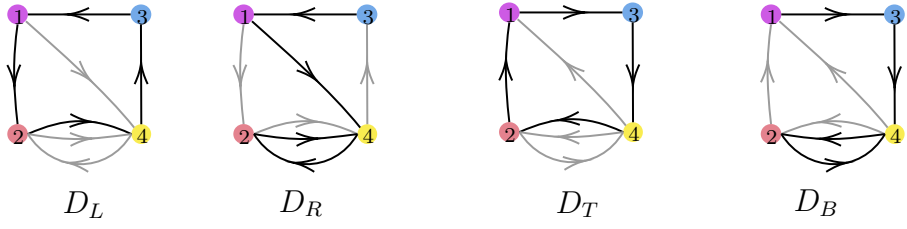

\begin{center}

\tikzset{every picture/.style={line width=0.75pt}} 


\end{center}
\caption{\textbf{Running Example:} The four edge digraphs corresponding to $f_0:V \rightarrow H$ in Example \ref{example: run ex new order}. The black directed edges are in the directed graphs and the light gray edges are the remaining edges in the digraphs corresponding to $M$ for $D_L$ and $D_R$ and corresponding to $M^T$ for $D_T$ and $D_B$.}
\label{fig: digraph after re-ordering}
\end{figure}

\end{example}

Now we can guarantee there is at least one corner periodic point, and hence at least one periodic boundary component of $\Sigma_2$ which results from the unidentified sides of the two adjacent infinite strips. Choose $x$ on this line and let $G = \{f^n(x)\}_{n \in \mathbb{Z}}$. Replace each point in $G$ with its link. As before, replace infinite equivalence classes with their link, double and identify along boundary components to form $\Sigma$. Note that none of the infinite equivalence classes occur on the specified periodic boundary components, so the links of points in $G$ do not interfere with this step. The links of points in $G$ will result in at least two ends accumulated by genus. 

\section{The End-Periodic Map}\label{sec:end-periodic map}

The following two lemmas will help us ensure we can find nesting neighborhoods of the ends of $\Sigma$ which are attracting or repelling.

\begin{lemma}\label[lemma]{lemma: max N}
    There exists $N$ such that $f_{1,L}^N(L_1)$ is contained in the boundary of the infinite strips. Analogous statements hold for $f_{1,R},f_{1,T}^{-1}$ and $f_{1,B}^{-1}$. 
\end{lemma}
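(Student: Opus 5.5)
The plan is to make the proof of \Cref{lemma: infinite orbit} uniform over $L_1$ by bounding, independently of the starting point, the number of steps each of its cases takes. The first step is to split $L_1$ into the same four pieces as in that proof: (a) the boundaries of $E_L(z)-P(z)$; (b) the boundaries of the switch regions $P(z)$; (c) the left edges of the rectangles $Q_i$; and (d) the points of $L_1$ in the interior of $\Sigma_0$, i.e. the interior vertical edges of strips. For (a), the edge map carries strip boundaries into strip boundaries, so these points need no further iterations. For (b), the argument of part (ii) of \Cref{lemma: infinite orbit} shows that after at most $2p$ applications of $f_{1,L}$ we land in the boundary of $E_L(z)-P(z)$. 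Here $p$ is the period of the relevant orbit, and there are only finitely many periodic orbits, so we may set $p_{\max}$ to be the largest period.

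For (d), one application of $f_{1,L}=f_L$ sends the point to a left edge of some $Q_k$, which reduces this case to (c) at the cost of one step. The crucial case is (c). We use the digraph $D_L$: every directed path in $D_L$ of length at least $n$ (the number of vertices) must contain a periodic vertex. Hence for any $x\in \mathrm{Left\ edge}(Q_i)$ there is some $N\le n$ such that $f_L^N(x)$ lies on a left edge containing a periodic point. This bound is uniform because it depends only on the combinatorics of $D_L$ and not on $x$.

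The main obstacle is to check that once $x_N$ lies on a left edge of $Q_k$ with periodic point $z$, the next iterate $f_{1,L}(x_N)$ actually lies in a strip boundary or in $\partial P(z')$. In \Cref{lemma: infinite orbit} this was asserted, but it must hold for every point of that edge, not just near $z$. To prove it, I would argue as follows. The strip $E_L(z_j)$ is glued along $f_L^{2p+j}(\mathrm{Left\ edge}(Q_i))$, and $f_{1,L}$ maps the edge of $Q_k$ into the region where the switch map or the gluing acts, that is, into $\partial P$ or into the strip boundary. Points of the left edge outside $f_L^{p}(\mathrm{Left\ edge})$ may first need up to $p$ further applications of $f_L$ before entering $I$. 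I would therefore allow an extra $p_{\max}$ steps to land in $I$ and then use the $2p_{\max}$ steps from case (b).

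Adding these contributions gives $N=1+n+p_{\max}+2p_{\max}+1$, which works for all of $L_1$. Finally, $f_{1,L}$ maps strip boundaries to strip boundaries by the shift formula of part (i), so once a point reaches a strip boundary its later iterates stay there and taking a larger $N$ is harmless. The argument for $f_{1,R}$ is symmetric, and the arguments for $f_{1,T}^{-1}$ and $f_{1,B}^{-1}$ use $D_T$ and $D_B$ in place of $D_L$.
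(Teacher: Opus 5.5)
Your proposal is correct and follows essentially the paper's route: a uniform bound from the finite digraph $D_L$ for reaching a left edge that carries a periodic point, then a bound in terms of the periods for passing through the switch-region boundaries into the strip boundaries, and forward-invariance of the strip boundaries so that any larger $N$ also works. One bookkeeping remark: starting from the left edge of a non-initial rectangle in a cycle, a point can need up to $2p-1$ applications of $f_L$ (not $p$) before it enters a switch-region boundary; however, points of $\partial P(z)$ on the left edges reach a strip boundary within $p$ steps rather than $2p$, so your total $N=n+3p_{\max}+2$ still suffices (this is a more careful count than the paper's $k+2\max p$, which implicitly assumes $p$ steps suffice to enter the switch region).
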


\begin{proof}
    On any finite directed graph, there exists $k$ such that every directed path of length $k$ ends at a periodic vertex. Let $k$ be such a number for the directed graph $D_L$. Then for any $x \in L \subseteq \Sigma_0$, the point $f_L^k(x)$ is on a left edge which contains a periodic point $z$. Hence for any $x \in L_1$, we have $f_{1,L}^k(x)$ is on a left edge which contains an infinite strip. Let $p$ be the period of $z$. By definition of the switch region, $f_{1,L}^{k+p}(x)$ is either contained in the boundary of the switch region of $z$, or in the boundary of the infinite strip. By definition of the switch maps, $f_{1,L}^{k+2p}(x)$ must be in the boundary of the infinite strip. 
    
    By definition of $f_1$, if $y$ is in the boundary of an infinite strip, $f_{1,L}^i(y)$ is in the boundary of a (possibly different) infinite strip for every $i \in \mathbb{Z}_{\geq 0}$. Hence, letting $$N = k + 2 \max \{ p \ | \ p \text{ is the period of a cycle in }f_L\}$$ proves the lemma.
\end{proof}

\begin{lemma}\label[lemma]{lemma: alpha}
    There exists $\alpha \in \mathbb{R}_{>0}$ such that for every infinite strip $E_{*}(x)$, if $\beta > \alpha$ then $(0,\beta)_{*,x}$ and $(1,\beta)_{*,x} \in E_{*}(x)$ have $\sim$ equivalence class either infinite or contained in boundary of the set of infinite strips. Here $*=L,R,T,$ or $B$. 
\end{lemma}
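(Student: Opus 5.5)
The plan is to use the finiteness of the strip data together with \Cref{lemma: max N}. Let $N$ be the integer from \Cref{lemma: max N}, taken to work simultaneously for all four edge maps $f_{1,L}$, $f_{1,R}$, $f_{1,T}^{-1}$, $f_{1,B}^{-1}$. The idea is that points far enough out in an infinite strip can only be reached by high iterates of the edge maps. High iterates land in the boundary of the infinite strips.

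First, I will describe the points on the sides $\{0\}\times[0,\infty)$ and $\{1\}\times[0,\infty)$ of $E_*(x)$ that can appear in a nontrivial relation. A point $z$ is related to $w \neq z$ only if $z = f_{1,L}^{N'}(y)$ or $f_{1,R}^{N'}(y)$ for some $y \in L_1\cap R_1$ (or the analogous top/bottom statement) with $N'\ge 1$. From the formulas in case (i) of the proof of \Cref{lemma: infinite orbit}, once a point enters a strip, each full cycle of length $p$ raises the strip coordinate by exactly $1$. Inside $\Sigma_0$ and the switch regions, only boundedly many steps are spent before entering a strip. This bound is $2p$, by case (ii) of that proof and the definition of the switch maps. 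Hence there is a constant $C$, depending only on the finitely many periods $p$ and on $N$, with the following property: if $z=(0,\beta)_{*,x}$ or $(1,\beta)_{*,x}$ equals $f_{1,*}^{N'}(y)$ with $\beta > C$, then $N' \ge N$. (Since the edge maps are essentially shifts here, I also expect $y$ can be pulled back within the strips.) Set $\alpha = C$.

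Second, take $\beta>\alpha$ and $z=(0,\beta)_{*,x}$, and consider any chain $z=z_0\sim z_1\sim\cdots$ of generating relations. For each generating relation $z_k\sim z_{k+1}$, both points are $N'$-th images of a common $y$ under two edge maps. If $z_k$ lies far out in a strip, then $N' \ge N$, so by \Cref{lemma: max N} the partner $z_{k+1}$ also lies in the boundary of the infinite strips. The partner's height need not stay above $\alpha$, though, so I cannot simply induct on height.

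Third, I handle this by a dichotomy. If the class $[z]_\sim$ is finite, then by \Cref{lemma:links} it has at most two points, $z$ and one partner. The partner is an $N'$-th image with $N'\ge N$, so it lies in the boundary of the strips, and we are done. If the class is infinite, we are in the first alternative of the statement.

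The main obstacle is the first step: making the bound $C$ uniform, and justifying that only high iterates reach large heights. This means controlling the switch regions and the gluing map $g$, which places strip bases at $f_L^{2p+j}$-images of edges. I would handle it by noting that the region of $\Sigma_1$ outside the strips above height $1$ is compact, and that each edge-map step raises height by at most $1$ per period. If this direct argument is awkward, the fallback is to use the fundamental-domain argument from \Cref{lemma:links}. Only finitely many points of a bounded fundamental domain of $L_1$ have nontrivial relations, so one can take $\alpha$ larger than all their forward-image heights for fewer than $N$ steps.
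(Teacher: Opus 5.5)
Your proposal is correct and follows essentially the paper's argument. You fix $N$ from \Cref{lemma: max N}, choose $\alpha$ so that a point on a strip side above height $\alpha$ can only be an $N'$-th edge-map image of a point of $L_1\cap R_1$ (or $T_1\cap B_1$) with $N'\ge N$, and use \Cref{lemma:links} to see that in a finite class the unique partner is $f_{1,*}^{N'}$ of the same point, hence lies in $f_{1,*}^{N}(\cdot)$ and so in the strip boundaries.

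The only difference is how you get $N'\ge N$. You count height: at most one unit per step, starting from the bounded set $L_1\cap R_1\subset\Sigma_0$. The paper instead takes high points to lie in $f_{1,L}^N(L_1)$ and uses that the image of $f_{1,L}$ misses $L_1\cap R_1$, which implicitly pulls $z$ back using \Cref{lemma: finite non-injective}. Your height count avoids that injectivity step.

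Drop your fallback argument. It is false that only finitely many points of a fundamental domain have nontrivial relations: every point of $f_{1,L}^{K}(L_1\cap R_1)$ does.
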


\begin{proof}
    Let $N$ be the maximal $N$ from \Cref{lemma: max N} among the four edge maps. Choose $\alpha \in \mathbb{R}_{>0}$ large enough so that for every infinite strip $E_{*,x}$ and every $\beta > \alpha$, the points $(0,\beta)_{*,x}$ $(1,\beta)_{*,x}$ are contained in the $N^{th}$ image of whichever edge map of $f_1$ is defined on that point. 
    
    If $x$ is a corner periodic point, then there are some points in the boundary of the corresponding infinite strip on which there are no edge maps of $f_1$ defined. Their equivalence class is a singleton.

    Now suppose $z \in f_{1,L}^N(L_1)$ and has finite equivalence class. Then by \Cref{lemma:links}, either $[z]_{\sim}$ is a singleton or contains exactly two points. If $[z]_\sim$ contains a second point $w$, then there is some $x \in L_1 \cap R_1$ and some $K \in \mathbb{Z}_{\geq 1}$ such that $z=f_{1,L}^{K}(x)$ and $w=f_{1,R}^K(x)$. Since $x \in L_1 \cap R_1$ and the image of $f_{1,L}$ does not intersect $L_1 \cap R_1$, we must have $K \geq N$. Thus $w=f_{1,R}^K(x) \subseteq f_{1,R}^N(R_1)$. By choice of $N$, $w$ is in the boundary of the infinite strips. 
    
    A similar argument holds for elements in $f^N_{1,R}(R_1),f_{1,T}^{-N}(T_1)$, and $f_{1,B}^{-N}(B_1)$. 
\end{proof}

\begin{proposition} \label[proposition]{prop:end-periodic}
    The map $f_1:V_1 \rightarrow H_1$ extends to an end-periodic homeomorphism $f:\Sigma \rightarrow \Sigma$. 
\end{proposition}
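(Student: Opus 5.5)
The plan has two stages: first extend $f_1$ to a homeomorphism of $\Sigma$, then check end-periodicity using the translation structure on the infinite strips.

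\textbf{Step 1: extension to $\Sigma_2$.} We extend $f_1$ continuously to $\overline{V_1}=\Sigma_1$ by the edge maps. On $L_1$ we use $f_{1,L}$, on $R_1$ we use $f_{1,R}$, and on the remaining boundary of $\Sigma_1$ (the unglued sides of infinite strips and the outer edges of the $Q_i$) we use the obvious one-sided limits. Call the result $F:\Sigma_1\to\Sigma_1$.
\begin{itemize}
\item $F$ descends to $\Sigma_2$. If $z\sim w$ via (i), say $z=f_{1,L}^N(x)$ and $w=f_{1,R}^N(x)$, then $F(z)=f_{1,L}^{N+1}(x)\sim f_{1,R}^{N+1}(x)=F(w)$. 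For $x\in L_1\cap R_1$ itself, $f_{1,L}(x)\sim f_{1,R}(x)$ by definition, which is exactly what makes $F$ well defined on the two sides of interior vertical edges.
\item The inverse descends likewise. Define it using $f_{1,T}^{-1}$ and $f_{1,B}^{-1}$ together with relation (ii).
\item Continuity. A neighbourhood of a finite class is a disk or half disk assembled from at most two half-disks, by \Cref{lemma:links}. The map agrees with $f_1$ on each piece and matches along the glued edges, so it is continuous there.
\end{itemize}

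\textbf{Step 2: the exceptional points and the double.} By \Cref{lemma:infinite equivalence}, $F$ permutes the finitely many infinite classes. So $F$ maps a small punctured neighbourhood of one such class homeomorphically onto that of its image. Hence it extends to the blow-ups that replace these points by their links, and similarly to the links inserted at the points of $G$, since $G$ is $f$-invariant by definition. Doubling commutes with $F$. This gives a homeomorphism $f:\Sigma\to\Sigma$ extending $f_1$.

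\textbf{Step 3: end-periodicity.} This is the main obstacle. Using \Cref{lemma: alpha}, fix $\alpha$ and consider in each strip $E_L(x)$ or $E_R(x)$ the far region $\{w>\alpha\}$. Its boundary points have infinite class or are identified only with other strip-boundary points. So after the blow-ups, the union of these far regions over all $L$- and $R$-strips, together with their doubles, is a closed subsurface $U_+$ whose frontier is the finite union of compact arcs $\{w=\alpha\}$ (doubled to curves). For all strips in one periodic cycle of period $p$, $f_1$ permutes the strips, and after a full cycle it translates $w\mapsto w+1$. Hence
$$f^{m}(U_+)\subsetneq U_+ \qquad\text{and}\qquad \bigcap_k f^{km}(U_+)=\emptyset$$
for $m$ the least common multiple of the periods.

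The delicate point is checking that the sets $f^{km}(U_+)$ form neighbourhood bases of the ends. This requires showing that the complement of $U_+\cup U_-$ is compact. That complement consists of $\Sigma_0$ (compact), the switch regions, and the bounded parts $\{w\le\alpha\}$ of the strips, modulo $\sim$. The only worry is the inserted handles and blown-up points inside the strips, but these lie in the far regions and are carried along by the translation. The $T$- and $B$-strips give $U_-$ symmetrically, using $f^{-1}$ and $w\mapsto w-1$.

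Finally, each complementary component of a compact core is a neighbourhood of an end. By the translation, each component of $U_\pm$ contains genus at every depth, because of the handles from \Cref{infinite-type} and the identifications. Therefore every end falls under (i) or (ii), and $f$ is end-periodic.
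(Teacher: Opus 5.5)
Your argument is essentially the paper's. You extend $f_1$ through the edge maps so that $\sim$ makes it well defined, and use \Cref{lemma:infinite equivalence} to carry the blown-up infinite classes to one another. You then take the regions deep in the $L/R$ (resp.\ $T/B$) strips given by \Cref{lemma: alpha} as attracting (resp.\ repelling) nesting neighbourhoods, with $m$ a common multiple of the periods.

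There are two small caveats, and neither affects the nesting argument.

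First, with a single uniform cutoff $\alpha$, the frontier of $U_+$ need not consist only of the arcs $\{w=\alpha\}$. Relation (i) relates the heights of $f_{1,L}^N(x)$ and $f_{1,R}^N(x)$ by a factor of roughly $p_L/p_R$ when the two orbits fall into cycles of different periods $p_L\neq p_R$. So a far point on one strip can be glued to a near point on another. The frontier is still compact, which is all you actually use.

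Second, your closing claim that every end is accumulated by genus is only justified for the ends adjacent to the handle orbit $G$. The paper's proof of this proposition does not address that point either.
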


\begin{proof} 
 Every point $x \in \Sigma$ is the limit of a sequence of points $\{x_i\}_{i=1}^\infty \subseteq V_1$. First, suppose $x$ is not a point in $\Sigma$ resulting from blowing up an infinite equivalence class in $\Sigma_1$. Then $ [\lim_{i \rightarrow \infty} f_1(x_i)]_\sim$ is also a finite equivalence class by \Cref{lemma:infinite equivalence}. Since the equivalence relation $\sim$ identifies the different images of points under the left and right edge maps, setting $f(x) := [\lim_{i \rightarrow \infty} f_1(x_i)]_\sim$ is well-defined and continuous.

 Now suppose $x \in \Sigma$ is a point in $\Sigma$ resulting from blowing up an infinite equivalence class in $\Sigma_2$. Let $[z]_\sim$ be the infinite equivalence class. Since $x \in \text{link}([z]_\sim)$, there is a corresponding sequence $\{x_i\}_{i=1}^\infty \subseteq V_1$ converging to a point $z \in [z]_\sim$. There is a point $y \in \text{link}([\lim_{i \rightarrow \infty} f_1(x_i)]_\sim)$ which corresponds to the sequence $\{f_1(x_i)\}$. By \Cref{lemma:infinite equivalence}, $[\lim_{i \rightarrow \infty} f_1(x_i)]_\sim$ is an infinite equivalence class, and hence $y \in \Sigma$. Let $f(x)=y$. Continuity of $f_1$ ensures that $f$ extended to these links as described is continuous. 
 
 Similarly, $f^{-1}$ is well-defined and continuous. Hence $f:\Sigma \rightarrow \Sigma$ is a homeomorphism. 

In order to show that $f: \Sigma \rightarrow \Sigma$ is end-periodic, it suffices to show that there exists $m > 0$ such that for each end $E$ of $\Sigma$ there is a nesting neighborhood $U_E$ of $E$ so that either $f^m(U_E) \subsetneq U_E$ and the sets $\{f^{nm}(U_E)\}_{n > 0}$ form a neighborhood basis of $E$ or $f^{-m}(U_E) \subsetneq U_E$ and the sets $\{f^{-nm}(U_E)\}_{n > 0}$ form a neighborhood basis of $E$.

Fix an end $E$ of $\Sigma$. There is some infinite strip $E_*(x) = [0,1] \times [0, \infty)$ in $\Sigma_1$ and some $\gamma >0$ such that the region in $\Sigma$ induced by $([0,1] \times (\gamma ,\infty))_{*,x}$ is contained in a neighborhood of $E$. Here, $*=L,R,T$ or $B$. 

Let $\alpha$ be as in \Cref{lemma: alpha} and assume $\alpha > \gamma$. By choice of $\alpha$, there exists a neighborhood $U_E$ of the region induced by $([0,1] \times (\alpha ,\infty))_{*,x}$ which is contained entirely in the infinite strips. If needed, increase $\alpha$ to remove any intersection with the switch regions.  If $*=L$ or $R$, then $U_E$ is contained in the left and right infinite strips. If $*=T$ or $B$, then $U_E$ is contained in the top and bottom infinite strips.

Let $m$ be the product of the periods of every cycle in $f_L$, $f_R$, $f_T^{-1}$ and $f_B^{-1}$. Now  the nesting behavior is by construction. More specifically, if $*=L$ or $R$, then $\{f^{nm}(U_E)\}_{n > 0}$ form a neighborhood basis for $E$. If $*=T$ or $B$, then $\{f^{-nm}(U_E)\}_{n > 0}$ form a neighborhood basis for $E$. \end{proof}

    We now call $f:\Sigma \rightarrow \Sigma$ the \textit{end-periodic homeomorphism corresponding to $M$}. Note that the choices we made when constructing $f$ and $\Sigma$ are: the bijections $\sigma$ and $\tau$ of the vertical and horizontal strips, the initial periodic points for each finite orbit of the edge maps of $f_0$, the exact paths $W$ for each initial periodic point, and the switch maps (each of which can be any homeomorphism that behaves as required on the boundary of the switch regions).



\begin{proposition} \label[proposition]{prop:stretch-factor}
    The end-periodic homeomorphism $f:\Sigma \rightarrow \Sigma$ corresponding to $M$ has Handel-Miller stretch factor $\lambda(f) = \lambda$, the spectral radius of $M$. 
\end{proposition}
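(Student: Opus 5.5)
Following the warm-up in \Cref{integers}, the plan is to exhibit the Markov decomposition of a core directly and to check that its incidence matrix is (a block form of) $M$. First fix $\alpha$ as in \Cref{lemma: alpha}, enlarged so that the switch regions $P(x)$ and their images lie below height $\alpha$ in every infinite strip. Let $U_\pm\subset\Sigma$ be the regions induced by the parts $([0,1]\times(\alpha,\infty))_{*,x}$ of the strips, together with their doubles. By \Cref{prop:end-periodic} these are tight positive and negative ladders: $L,R$ strips give the attracting ends and $T,B$ strips the repelling ends. Their complement is a core.

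The key claim is that the escaping set $\mathcal U_+\cup\mathcal U_-$ contains everything coming from the infinite strips, the switch regions, and the handles inserted in \Cref{infinite-type}. Hence the complement of the escaping set is the image in $\Sigma$ of the invariant set
\[K=\bigcap_{n\in\mathbb Z} f_0^{\,n}(\Sigma_0)\]
on the original rectangles, together with its double. To see this, note that a point whose forward (resp. backward) orbit ever enters a switch region reaches a strip within $2p$ steps, by the switch-map definition used in \Cref{lemma: infinite orbit}. It is then translated to infinity. So the non-escaping points are exactly those whose full orbit stays in $V_0\cap H_0$ away from the $P(x)$, where $f=f_0$. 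On $K$, $f_0$ is the standard hyperbolic map built from the rectangles $Q_k$: it stretches by $\lambda$ horizontally, contracts by $\lambda^{-1}$ vertically, and maps $\tau_k^{-1}(V^{(k)}_{i,j})$ across $\sigma_i(H^{(i)}_{k,j})$. The rectangles $Q_1,\dots,Q_n$ (intersected with the core) therefore form a Markov partition with transition counts $m_{ik}$. The images of the leaves of $\mathcal V,\mathcal H$ (\Cref{def:final-foliations}) through $K$ are the candidates for $\Lambda^\pm$.

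Next, identify the Handel--Miller laminations with these leaves. Junctures are the boundaries of $U_\pm$, which run across the strips transversally. Their iterates under $f^{\mp n}$ are pushed, via the edge maps and switch regions, toward leaves of the vertical (resp. horizontal) lamination $\mathcal V_0$ (resp. $\mathcal H_0$) restricted to $K$. After geodesic tightening, the accumulation sets $\overline{\mathcal J^\mp}-\mathcal J^\mp$ are therefore isotopic to the leaves of $\mathcal H,\mathcal V$ meeting $K$. Consequently the Markov decomposition $\Lambda^+\cap\Lambda^-$ is conjugate to the rectangle decomposition above, and its incidence matrix is $M\oplus M$, one copy from each side of the double. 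In the non-primitive case the modified doubling of \Cref{weak perron} still gives a block matrix built from copies of $M$. In every case the spectral radius is $\lambda$.

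Alternatively, one can bypass the combinatorial identification. Since $\lambda(f)$ equals the exponential of the topological entropy of $f|_{\Lambda^+\cap\Lambda^-}$, it suffices to show that this restriction is topologically conjugate to the two-sided subshift of finite type with transition matrix given by copies of $M$. The entropy of that subshift is $\log\lambda$ by Perron--Frobenius.

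The main obstacle is the identification in the third paragraph: showing rigorously that the Handel--Miller laminations (defined via geodesic tightening of juncture orbits) coincide up to isotopy with the measured-foliation leaves through $K$. In particular, one must rule out extra recurrent dynamics created by the switch maps or by the blown-up infinite equivalence classes. I would first handle this by showing that every point outside the image of $K$ escapes, using \Cref{lemma: max N} for the boundary and the switch-map structure for the interior. I would then invoke the Cantwell--Conlon--Fenley description of the core dynamical system \cite[Theorem~9.2]{CC-book}: $f$ preserves the transverse measured foliations and has a Markov partition on the non-escaping set, so the Handel--Miller core dynamics is conjugate to it.
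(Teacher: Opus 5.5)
Your third paragraph is essentially the paper's proof. Junctures are disjoint from one foliation and cross the other, their iterates accumulate on it, so $\Lambda^-$ and $\Lambda^+$ are identified with $\mathcal V$ and $\mathcal H$ and the incidence matrix is $M\oplus M$. The genuine gap is the dynamical identification in your second paragraph, on which your Markov-partition claim, your entropy alternative and your proposed fix all rest.

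That identification is internally inconsistent. Since $f_0$ maps $V$ onto $H$, and both have full measure in $\Sigma_0$, the set $K=\bigcap_n f_0^n(\Sigma_0)$ is not a horseshoe-type Cantor set. It is $\Sigma_0$ minus countably many leaf segments. In particular it meets the interior of every switch region, and you say those points escape. So ``non-escaping $=$ orbits avoiding $\bigcup_x P(x)$'' and ``non-escaping $=$ image of $K$'' cannot both hold. Suppose the first were true. Then the survivors would lie in the subshift of the $M$-shift obtained by forbidding the cylinders contained in the switch regions. For irreducible $M$, a proper subshift has entropy strictly less than $\log\lambda$. So the rectangles would not give a Markov partition with matrix $M$ on that set, and your route could not produce $\lambda$.

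The first description is also false. Each switch map must agree with $f_0$ on the arc $W$ bounding $P(x)$, so $f^p=f_0^p$ on $W$. The point $x$ is fixed by the edge map $f_L^p$, and $f_0^p$ stretches the horizontal leaf through $x$ away from the edge by $\lambda^p$, so $f_0^p(W)\not\subset\overline{P(x)}$. Hence points of $P(x)$ near $W$ are carried out of $P(x)$ and need not escape. Only a neighborhood of the attaching segment is pushed into the strip. The lemma on orbits of the edge maps concerns points converging to the edge part of $\partial P(z)$, and does not give your ``strip within $2p$ steps'' claim for all points.

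The correct picture is that the switch map compresses the $f_0$-dynamics of $P(x)$ into the part adjacent to $W$ and inserts an escaping region along the edge. For example, take $f^p=f_0^p\circ\psi^{-1}$ on that part, with $\psi$ preserving horizontal leaves; by the Alexander trick this does not change the isotopy class. With such a choice the non-escaping set is only semi-conjugate to the $M$-shift, with the stable and unstable leaves of the edge-periodic points blown up into escaping regions.

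Your proposed remedy does not reach this, for three reasons. Showing that points outside $K$ escape addresses the wrong inclusion. The map $f$ need not preserve $\mathcal V,\mathcal H$ on the switch regions, which are arbitrary interpolations with prescribed boundary values. And \cite[Theorem~9.2]{CC-book} gives a conjugacy of the Handel--Miller core to some shift of finite type without identifying its matrix. Passing from one representative's dynamics to $\Lambda^\pm$ needs an isotopy-invariant argument. That is why the paper argues only with juncture iterates and their accumulation on $\mathcal V$ and $\mathcal H$.
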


\begin{proof}

Our goal is to show that the incidence matrix of the Markov decomposition determined by 
\(\Lambda^+ \cap \Lambda^-\) agrees with the incidence matrix of the Markov decomposition induced by the foliations \(\mathcal V\) and \(\mathcal H\), which is
\[
\begin{bmatrix}
M & 0 \\
0 & M
\end{bmatrix},
\]
where \(M\) is the irreducible matrix with which we began.

It is convenient to replace the foliations \(\mathcal V\) and \(\mathcal H\) with laminations. Abusing notation, we denote by \(\mathcal V\) and \(\mathcal H\) the laminations obtained by blowing up the singular leaves of the corresponding foliations. Since the foliations are transverse and have no prongs of order \(1\) or \(2\), these laminations determine the same Markov decomposition and incidence matrix as the original foliations.

Let \(J\) be an essential multiloop bounding a finite-type subsurface in which \(\mathcal V\) and \(\mathcal H\) intersect, and write \(J=j_+\sqcup j_-\), where \(j_+\) (resp. \(j_-\)) bounds a neighborhood of the regions arising from the infinite strips attached to the left and right (resp. top and bottom) sides of the rectangles. By \Cref{prop:end-periodic}, \(j_+\) and \(j_-\) are positive and negative junctures for \(f\). Let
\[
J^+=\bigcup_{k\in\mathbb Z} f^k(j_+),
\qquad
J^-=\bigcup_{k\in\mathbb Z} f^k(j_-).
\]

Fix a hyperbolic metric on \(\Sigma\) and let \(\mathcal J^\pm\) denote the union of the geodesic representatives of curves in \(J^\pm\). The Handel--Miller laminations are
\[
\Lambda^+=\overline{\mathcal J^-} - \mathcal J^-,
\qquad
\Lambda^-=\overline{\mathcal J^+} - \mathcal J^+ .
\]

After straightening, \(j_+\) is disjoint from \(\mathcal V\) and intersects the leaves of \(\mathcal H\) essentially and with consistent orientation. Consequently, each iterate \(f^k(j_+)\) lies between leaves of \(\mathcal V\) while intersecting leaves of \(\mathcal H\). Since the dynamics of $f$ on $\mathcal{V} \cap \mathcal{H}$ are determined by the irreducible matrix \(M\), the iterates \(f^k(j_+)\) accumulate on leaves of \(\mathcal V\). Hence, $\Lambda^- =\mathcal V$ as projective measured laminations. A symmetric argument applied to \(j_-\) shows that $\Lambda^+ =  \mathcal H $ as projective measured laminations. \end{proof}

\begin{remark}
    Note that one implication of the proof of \Cref{prop:stretch-factor} is that all the end-periodic homeomorphisms we construct have the additional property that their Handel-Miller laminations admit transverse measures of full support that are contracted/expanded by the stretch factor. We appreciate Chi Cheuk Tsang for suggesting we highlight this fact.
\end{remark}

 \section{Ensuring $\Sigma$ is Connected}\label{weak perron}

 In this section, we show that for any weak Perron number, we can build $\Sigma$ in a way that guarantees it is connected without impacting the conclusions of \Cref{prop:end-periodic} and \Cref{prop:stretch-factor}. 

\begin{proposition}
    If $\lambda$ is a Perron number, the surface $\Sigma$, as in \Cref{prop:end-periodic}, can be chosen to be connected.
\end{proposition}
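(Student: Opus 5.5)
The approach is to use Lind's theorem (\Cref{thm:Lind}) to choose a primitive matrix $M$ with spectral radius $\lambda$. We then show that primitivity of $M$ forces the rectangles $Q_1,\dots,Q_n$ to lie in one connected component of $\Sigma_2$. Finally we check that the modifications producing $\Sigma$ preserve connectedness.

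\textbf{Choosing $M$.} Since $\lambda$ is Perron, \Cref{thm:Lind} gives a non-negative integer matrix $M$ with some power strictly positive and with $\lambda$ as an eigenvalue. Perron--Frobenius then identifies $\lambda$ as its spectral radius. If $M$ has a row or column that is zero, pass to a primitive irreducible block; a primitive matrix has no zero rows or columns once we restrict to the essential class. This changes neither $\lambda$ nor primitivity.

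\textbf{Rectangles lie in one component of $\Sigma_2$.} We use the identifications made by $\sim$. Consider an interior vertical edge of $Q_k$ separating $V^{(k)}_{i,j}$ from the next strip $V^{(k)}_{i',j'}$. Its image under $f_{1,L}$ lies on the left edge of some $Q_a$, and its image under $f_{1,R}$ lies on the right edge of some $Q_b$. These two images are identified in $\Sigma_2$, so $Q_a$ and $Q_b$ lie in the same component. The same holds with horizontal edges and $f^{-1}_{1,T},f^{-1}_{1,B}$. Now define a graph $\Gamma$ on the vertices $v_1,\dots,v_n$, joining $v_a$ to $v_b$ whenever some strip-boundary identification glues $Q_a$ to $Q_b$. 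Adjacent vertical strips in $Q_k$ map into rectangles $Q_i$ and $Q_{i'}$ with $m_{ik},m_{i'k}>0$. Hence, for each $k$, every pair of vertices in the support of column $k$ of $M$ is joined in $\Gamma$, since consecutive strips chain them together. Likewise every pair in the support of each row is joined, using the horizontal strips. Primitivity gives the connection between rows: $M$ is irreducible, so the bipartite "row--column support graph" is connected. Chaining column supports and row supports therefore links all $v_i$, and $\Gamma$ is connected. One detail needs checking here: a rectangle $Q_i$ appears in the image of column $k$ only if some strip of $Q_k$ lands in it, and this is exactly $m_{ik}>0$. The infinite strips are glued onto rectangles, so they join those components and create no new ones. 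Hence $\Sigma_2$ is connected.

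\textbf{Passing from $\Sigma_2$ to $\Sigma$.} Replacing finitely many points by their links keeps the space connected: a neighborhood of each link is an annulus or half-annulus attached to the rest. The same is true after inserting handles along $G$ as in \Cref{infinite-type}. Doubling a connected surface with nonempty boundary across that boundary gives a connected surface. The boundary is nonempty because of the unidentified sides of the infinite strips at the corner periodic point guaranteed by \Cref{corner periodic point}.

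\textbf{Main obstacle.} The step I expect to be hardest is the connectivity of $\Gamma$. The difficulty is that it uses only adjacency of strips within one rectangle, and that alone might seem to need primitivity rather than mere irreducibility. If the argument above genuinely uses only irreducibility, then primitivity must matter elsewhere. The likely place is the doubling step: in the imprimitive case the cyclic structure could make the rectangles split into $h$ classes that are permuted by $f$. If that happens, I would instead argue via $M^k>0$, so that every $Q_i$ meets $f^k(Q_j)$ in $\Sigma_2$, which forces a single component.
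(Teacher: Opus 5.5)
\textbf{The gap is the connectivity of $\Gamma$.} The edges of $\Gamma$ you actually exhibit come only from the $N=1$ identifications in the definition of $\sim$. Your graph joins $v_a,v_b$ when $a,b$ lie in a common column support $\{i: m_{ik}>0\}$ or a common row support $\{i : m_{ki}>0\}$.

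Your claim that irreducibility makes the bipartite row--column support graph connected is false. If $M$ is irreducible of period $h>1$ with cyclic classes $X_0,\dots,X_{h-1}$, every out- and in-neighbourhood lies in a single cyclic class. So $\Gamma$ has at least $h$ components; for a cyclic permutation matrix there are no interior strip edges at all.

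Primitivity does not repair this. Take $M=\begin{bmatrix}1&0&0&1\\1&0&0&0\\0&1&0&0\\0&0&1&0\end{bmatrix}$, whose digraph is the cycle $v_1\to v_2\to v_3\to v_4\to v_1$ plus a loop at $v_1$. It is primitive, with characteristic polynomial $x^4-x^3-1$. Only $Q_1$ is subdivided: its two vertical strips map into $Q_1$ and $Q_2$, and its two horizontal strips come from $Q_1$ and $Q_4$. So the only supports with more than one element are $\{1,2\}$ and $\{1,4\}$. Hence $v_3$ is isolated in $\Gamma$, and the bipartite support graph has three components. The $N=1$ gluings never touch $Q_3$, so your argument gives no reason for $Q_3$ to lie in the component of $Q_1$. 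Any attachment of $Q_3$ must come from identifications with $N\ge 2$, which your chaining discards.

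\textbf{What is missing, and where primitivity enters.} The connection has to come from propagating gluings along the dynamics. That is exactly your closing fallback, and it is the paper's proof. Choose $k$ with the first column of $M^k$ positive, so $f_0^k(Q_1)$ meets every $Q_i$. Let $U_i\subset\Sigma$ be the set induced by $Q_i$. Then $f^k(U_1)$ is connected because $f$ is a homeomorphism; continuity of $f$ is what encodes all the iterated identifications. Since $f^k(U_1)$ meets every $U_i$, all the $U_i$, and likewise their doubles, lie in one component.

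Your guess that primitivity matters at the doubling step is mistaken: doubling a connected surface along nonempty boundary is always connected. In the imprimitive case the disconnection already occurs in $\Sigma_2$. For every $N$, the maps $f_{1,L}^N,f_{1,R}^N$ (and likewise $f_{1,T}^{-N},f_{1,B}^{-N}$) send an interior edge of a rectangle into rectangles of a single cyclic class. This is why the paper needs a modified gluing of the two copies in the weak Perron case.
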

\begin{proof}
    Let $M$ be a primitive non-negative integer $n \times n$ matrix with spectral radius equal to $\lambda$. Let $f_0:V \rightarrow H$ be the piece map corresponding to $M$ with $\{\sigma_i\}_{i=1}^n$ and $\{\tau_i\}_{i=1}^n$ coming from Proposition \ref{corner periodic point}. Let $\Sigma$ be the corresponding infinite-type surface and $f:S\rightarrow S$ the corresponding end-periodic homeomorphism. 
    
    Since $M$ is primitive, there is a power $k$ such that the first column of $M^k$ is positive. Hence $f_0^k(Q_1) \cap Q_i \neq \emptyset$ for each $1 \leq i \leq n$.  For each $i$, let $U_i$ be the induced subset of $\Sigma$ coming from $Q_i$ in $\Sigma_0$. We must have $f^k(U_1) \cap U_i \neq \emptyset$ for each $1 \leq i \leq n$. Since $U_1$ is connected, $f^k(U_1)$ is connected. Hence, each $U_i$ is in the same connected component, and hence $\Sigma$ is connected. 
\end{proof}

\begin{proposition}
    If $\mu$ is a weak Perron number, there is a choice of primitive matrix $M$ with spectral radius $\mu$ so that the surface $\Sigma$, as in \Cref{prop:end-periodic}, can be chosen to be connected.
\end{proposition}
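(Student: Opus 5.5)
The statement says ``primitive'' matrix, but a weak Perron number that is not Perron is never the spectral radius of a primitive matrix. So I will read it as asking for an irreducible matrix, with the primitive case already handled by the previous proposition. The plan is to build the matrix explicitly and then repair connectivity through an $f$-equivariant modification supported in the escaping set.

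\textbf{Building the matrix.} Let $p \geq 1$ be minimal with $\mu^p$ Perron. By \Cref{thm:Lind}, choose a primitive non-negative integer matrix $A$ with spectral radius $\mu^p$. Form the $pn \times pn$ block cyclic matrix
\[
M=\begin{bmatrix} 0 & 0 & \cdots & 0 & A\\ I & 0 & \cdots & 0 & 0\\ 0 & I & \cdots & 0 & 0\\ \vdots & & \ddots & & \vdots\\ 0 & 0 & \cdots & I & 0\end{bmatrix}.
\]
It is irreducible of period $p$ because $A$ is primitive. Also $M^p=\mathrm{diag}(A',\dots,A')$, where each $A'$ is conjugate to $A$ by a cyclic relabelling, so the spectral radius of $M$ is $(\mu^p)^{1/p}=\mu$.

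\textbf{Running the construction.} Apply the construction of \Cref{infinite strips}, \Cref{infinite-type} and \Cref{sec:end-periodic map} to $M$. Use the bijections from \Cref{corner periodic point}, so that \Cref{prop:end-periodic} and \Cref{prop:stretch-factor} apply verbatim. The rectangles split into $p$ cyclic classes $\mathcal C_0,\dots,\mathcal C_{p-1}$, and $f_0$ sends rectangles in $\mathcal C_j$ into rectangles in $\mathcal C_{j+1}$. Within a single class, the primitivity of $A$ gives the argument of the previous proposition applied to $f^p$: some $f^{kp}(U_i)$ meets every $U_{i'}$ of the same class. Hence the rectangles of one class, together with the strips attached to them, lie in a single component. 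So $\Sigma$ has at most $p$ components, corresponding to the classes up to the doubling, and $f$ permutes them cyclically.

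\textbf{Connecting the components.} Choose a small closed disk $D$ deep inside an infinite strip attached to class $\mathcal C_0$, beyond the level $\alpha$ of \Cref{lemma: alpha}, so that the translates $f^n(D)$, $n\in\mathbb Z$, are pairwise disjoint and form a discrete family exiting the ends. Choose a second such disk $D'$ in a strip attached to class $\mathcal C_1$, with $\{f^n(D')\}$ disjoint from $\{f^n(D)\}$. Remove the interiors of all $f^n(D)$ and $f^n(D')$, and glue an annulus $A_n$ joining $\partial f^n(D)$ to $\partial f^n(D')$. Define $f$ on $A_n$ by the obvious map $A_n\to A_{n+1}$ compatible with $f$ on the boundary circles; the extended map is still a homeomorphism. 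Since $f^n(D)$ lies in class $\mathcal C_n$ and $f^n(D')$ in class $\mathcal C_{n+1}$ (indices mod $p$), each tube joins consecutive components. The result is therefore connected. Presumably this is the ``modified doubling'' mentioned earlier, and the tubes could equally be placed across the doubling seam.

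\textbf{Main obstacle.} The hard step is checking that this surgery preserves end-periodicity and the stretch factor. All tubes lie in the nesting neighborhoods $U_E$ from \Cref{prop:end-periodic}, and $U_E$ with its tubes is still mapped into itself by $f^m$. It may be necessary to take $m$ a multiple of $p$, or to choose $D,D'$ so that the pattern matches the end periods. The core, and hence the Markov decomposition coming from $\mathcal V\cap\mathcal H$, is untouched. So the argument of \Cref{prop:stretch-factor} gives incidence matrix $\mathrm{diag}(M,M)$ and $\lambda(f)=\mu$.

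Adding tubes may change the number of ends, or merge ends that are attracting for one class with ends that are attracting for another. This must be checked carefully: each new end must still be accumulated by genus and must have a single nesting type. I would first try to place $D$ and $D'$ in strips of the same type ($L/R$, positive) so that the tubes only merge attracting ends with attracting ends.
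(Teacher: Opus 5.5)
Your reading of ``primitive'' as ``irreducible'' is correct. Your block-cyclic matrix and your reduction to at most $p$ components, cyclically permuted by $f$, also match the paper. The gap is in the tube surgery.

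\textbf{The tubes cannot stay in the ends.} Since the tubes are attached $f$-equivariantly for every $n\in\mathbb Z$, the backward translates $f^{-n}(D)$ cannot all lie in a nesting neighborhood $U_E$ of an attracting end. If $f^{-Nm}(D)\subset U_E$ for all $N$, then $D\subset\bigcap_N f^{Nm}(U_E)=\emptyset$. So your claim that all tubes lie in the $U_E$ fails for every choice of $D$.

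\textbf{The family need not be locally finite.} Depth in the strip (the level $\alpha$) controls only the forward orbit of $D$. Backwards, $D$ runs down the strips, through the switch regions, and into $\Sigma_0$, where $f^{-1}=f_0^{-1}$ stretches it vertically by $\lambda$. Suppose some $y\in D$ is not in the negative escaping set $\mathcal U_-$. Then $f^{-n}(y)$ lies in the compact core $\Sigma-(U_+\cup U_-)$ for all large $n$. Hence infinitely many of the pairwise disjoint disks $f^{-n}(D)$ meet a fixed compact set. The family is then not locally finite, removing the disks and attaching tubes does not produce a surface, and the core carrying the Markov decomposition is altered. Nothing in your choice of $D$ rules this out.

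\textbf{Possible repair, and the paper's route.} You could try to take $D,D'\subset\mathcal U_+\cap\mathcal U_-$, after proving this set meets the classes you need. Then the translates exit attracting ends forward and repelling ends backward, and the tubes merge only ends of the same type. But finitely many tubes still pass through every core. You would have to re-choose junctures on the new surface and verify that the Handel--Miller laminations (which avoid $\mathcal U_+\cap\mathcal U_-$) and their Markov decomposition are unchanged.

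The paper's ``modified doubling'' is not tube surgery, and it avoids all of this by using boundary that is already present. The corner periodic orbit from \Cref{corner periodic point} gives, in each cyclic class, an unidentified boundary line. This line consists of a left ray $A_i$ and a top ray $B_i$ meeting at a point $x_i$. When gluing $\Sigma_3$ to its double $\Sigma_3'$, the paper glues $A_i$ to $A_i'$ but $B_i$ to $B_{i+1}'$, and uses the identity on all other boundary. This is automatically $f$-equivariant, because $f$ carries $A_i,B_i$ to $A_{i+1},B_{i+1}$. It only changes how boundary rays of infinite strips are paired, so the rectangles are untouched. Finally, it identifies $x_i\sim x_i'$ and $x_i\sim x_{i+1}'$, which chains all the corner points together and makes the $k$ components one.
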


\begin{proof}
 Let $\mu$ be a weak Perron number. Then $\mu^k=\lambda$ is a Perron number for some power $k$. Let $M$ be a primitive non-negative integer $m \times m$ matrix with spectral radius equal to $\lambda$. Let $n=km$ and define an $n \times n$ block permutation matrix:
 $$M' = \begin{bmatrix}
     0 & 0 & \hdots & 0 & M \\
     I_m & 0 & \hdots & 0 & 0 \\
     0 & I_m & \ddots & \vdots & \vdots \\
     \vdots & \ddots & \ddots & 0 & \vdots \\
     0 & \hdots & 0 & I_m & 0
 \end{bmatrix}$$
where $I_m$ is the $m \times m$ identity matrix. Observe that the spectral radius of $M'$ is exactly $\lambda^{1/k} = \mu$.

Let $f_0:V \rightarrow H$ be the piece map corresponding to $M'$ with $\{\sigma_i\}_{i=1}^n$ and $\{\tau_i\}_{i=1}^n$ coming from Proposition \ref{corner periodic point}. Proceed with the construction as described until we have $\Sigma_3$, which is equal to $\Sigma_2$ with each infinite equivalence class replaced by its link. Now double $\Sigma_3$ yielding $\Sigma_3$ and it's double $\Sigma_3'$. 

Note that if we glue $\Sigma_3$ and $\Sigma_3'$ along their boundaries by the identity map, we will build a surface with $k$ connected components, $C_1, \dots C_k$, and the resulting homeomorphism will map $C_i \mapsto C_{i+1}$ for each $i$, taking subscripts modulo $k$. 

Let $U_i$ be the induced subset of $S_3$ coming from $Q_i$ in $\Sigma_0$, and let $U_i'$ be its double. Since we chose our bijections $\{\sigma_i\}_{i=1}^n$ and $\{\tau_i\}_{i=1}^n$ to ensure the top left corner of $Q_1$ is periodic under $f_L$ (and hence also under $f_T^{-1}$, there is a boundary component of $U_1$ coming from the unidentified sides of the two infinite strips attached along a neighborhood of this corner periodic point. Let $A_1$ be the left infinite ray of this boundary component and $B_1$ be the top infinite ray of this boundary component, so $A_1$ and $B_1$ meet at a point $x_1$. Note that by the construction of $M'$, the orbit of $A_1$ is a set of infinite rays $A_2, \dots, A_k$ which are the equivalent boundary component for $U_{m+1},U_{2m+1}, \dots U_{n-k+1}$, and similarly for $B_1$. 

Now, glue each $A_i$ to $A_i'$ by the identity map and glue each $B_i$ to $B_{i+1}'$ by the identity map. Glue the remaining boundary components of $\Sigma_3$ and $\Sigma_3'$ by the identity map. If needed, insert genus along the $A_i$ and $B_i$ boundary components as described in \Cref{infinite-type}, and let $\Sigma$ be the resulting infinite-type surface. 

Observe that all of the $x_i$ are identified, and hence $U_1,U_{m+1},U_{2m+1}, \dots U_{n-k+1}$ and their doubles are all in the same connected component of $\Sigma$. 

Since $M$ is primitive, there is a power $\Sigma$ such that the first column of $M^s$ is positive. Hence there is a power $t$ such that the first through $m^{th}$ entries of the first column of $(M')^t$ are positive. Thus $f^t(U_1) \cap U_i \neq \emptyset$ for $1 \leq i \leq m$. Thus $U_1,\dots,U_m$ are all in the same connected component of $\Sigma$. Moreover, $f$ maps $U_j \cup \dots \cup U_{j+m}$ to $U_{j+m+1} \cup \dots \cup U_{j+2m}$, for each $j \in \{1, m+1, 2m+1, \dots n-k+1\}$. Hence $U_j, U_{j+1}, \dots, U_{j+m}$ are all in the same connected component for these values of $j$. Similarly for their doubles. As we have established that $U_1,U_{m+1},U_{2m+1}, \dots U_{n-k+1}$ and their doubles are all in the same connected component, $\Sigma$ is connected. \end{proof}


\section{Discussion and Further Questions}\label{discussion}

Although our work, combined with work of Cantwell--Conlon--Fenley, gives a complete characterization of end-periodic stretch factors, many interesting questions remain. We record a few of them here for the interested reader. In the finite-type setting, it remains an almost entirely open question to determine the minimal pseudo-Anosov stretch factor that can be realized on a surface with fixed complexity. In fact, it is not known for any punctured surface with genus $\geq 2$ or any closed surface with genus $\geq 3$ \cite{ChoHam}. Although the braid case has recently been resolved in all but 6 cases by work of \cite{TsangZeng}; see also \cite{SongKoLos, HamSong, Lanneau2011}. There is a natural adaption of this question to the infinite-type setting by replacing fixed complexity with fixed \emph{core characteristic}. Following \cite{Endperiodic2}, define the \emph{core characteristic} of $f$ to be \[\chi(f) = \max_{Y \subset \Sigma} \chi(Y),\] where the maximum is taken over all cores $Y \subset \Sigma$ for $f$.

\begin{question} For a fixed core characteristic $m$, what is the minimal stretch factor realized by any end-periodic homeomorphism $f$ with $\chi(f) = m$?
\end{question}

You could further restrict this question by also fixing a surface $L$. It is unclear to us whether the minimal stretch factor for a fixed core characteristic ``should" appear on a surface with more or less ends. Another restriction of this question would be to compute the minimal end-periodic stretch factor arising from the construction we used to prove \Cref{thm:main} for a fixed core characteristic. This naturally leads us to ask if we can compute bounds on the topological complexity of the end-periodic maps that we construct. The topological complexity of an end-periodic map $f$ is captured by its \emph{capacity} which is a pair consisting of the core characteristic $\chi(f)$ and the end-complexity $\xi(f)$. Morally, these quantities describe the complexity of the subsurface where $f$ behaves as a pseudo-Anosov and the amount that $f$ shifts by on the ends, respectively. See \cite[Section~2.3]{Endperiodic2} for detailed definitions and discussion.

We believe that the homeomorphisms obtained from this construction are atoroidal, but this is only clear in the examples built in \Cref{integers}. What is clear, in general, is that all of the examples we build are neither strongly irreducible nor irreducible. Indeed, the collection of links of infinite equivalence classes is invariant under $f$. These links give rise to a system of periodic arcs and reducing curves. 

\begin{question} Can this construction be modified to produce irreducible end-periodic homeomorphisms? More generally, can we characterize which weak Perron numbers can arise as the stretch factor of an (strongly) irreducible end-periodic homeomorphism? 
\end{question}

In a slightly different direction, we pose the following questions.

\begin{question} Can the infinite-type surface $\Sigma$ produced by our construction be realized as a limit of finite-type translation surfaces? Can the corresponding end-periodic maps we produced be realized as a limit of pseudo-Anosov maps on these finite-type translation surface?
\end{question}

Note that the example constructed in \Cref{integers} for $d=2$ can be approximated by a sequence of pseudo-Anosov maps on finite-type translation surfaces. Two consecutive elements in this sequence are illustrated in \Cref{fig:p5}. See \cite{bowman2013complete} for another example of such an approximation. 

In addition, it was pointed out to us by Chi Cheuk Tsang that for the examples in \Cref{integers} we are able to apply the ``despinning" construction of Landry--Minsky--Taylor to obtain a sequence of psuedo-Anosovs approximating the given end-periodic map. In particular, all the examples in \Cref{integers} have a compactified mapping torus with exactly two boundary components which are both genus $2$ surfaces with junctures consisting of a single nonseparating curves. Thus, there is a homeomorphism from the positive boundary to the negative boundary of the compactified mapping torus taking the positive juncture class to the negative juncture class and we are able to apply \cite[Proposition~3.9]{LandryMinskyTaylor2023}. It would be interesting to determine whether our general construction could be modified to guarantee that the end-periodic homeomorphism we build is realized on a ladder surface. 

\begin{figure}[htbp]
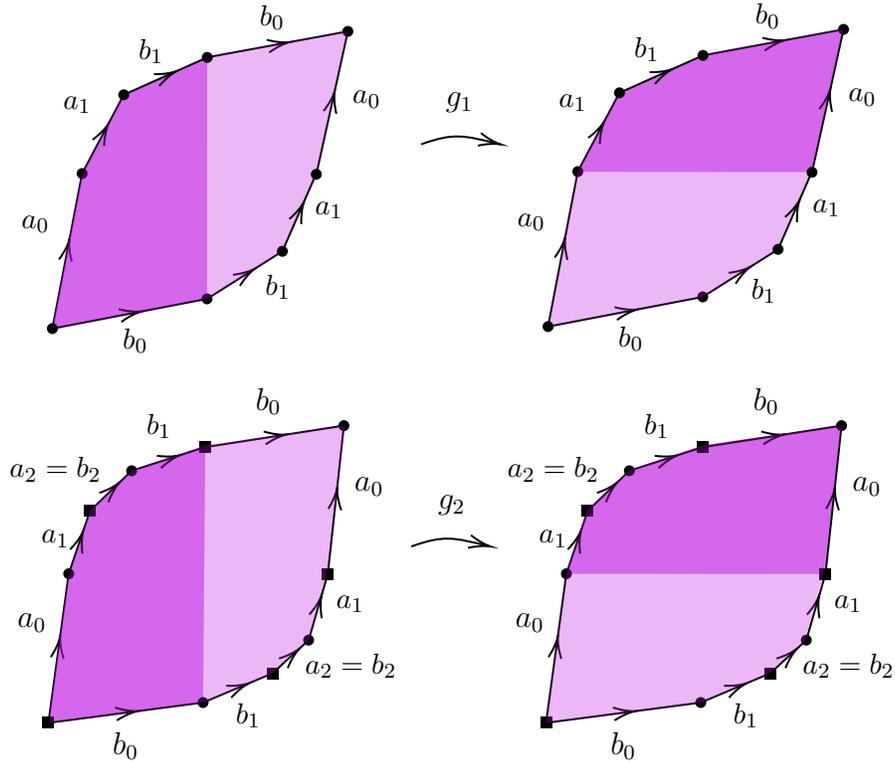

\begin{center}

\tikzset{every picture/.style={line width=0.75pt}} 


\end{center}
\caption{Two finite type translation surfaces along with psuedo-Anosov maps. Continuing this sequence approximates the end-periodic homeomorphism on the ladder surface constructed in \Cref{integers} for $d=2$. }
\label{fig:p5}
\end{figure}


\right]$, gives infinite type translation surface as the orientible double cover of the half translation surface in Figure \ref{p8}, which can be approximated by a sequence of finite type translation surfaces, one of which is the orientible double cover of the half translation surface in Figure \ref{p9}. 

We end with one final question. Recall that in order to guarantee our construction produced a surface of infinite-type we had to do a little more work in \Cref{infinite-type}. However, we do not know when this extra adjustment is necessary and when our surface was already of infinite type.  

\begin{question}
    Is it possible to completely characterize when our construction (or some variant of it) yields a surface of finite type? If not, are there sufficient conditions that guarantee the resulting surface has finite type?
\end{question} 

This seems to be a very hard question. For example, Baik--Rafiqi--Wu give sufficient conditions for a construction similar to ours to produce a finite-type surface equipped with a pseudo-Anosov homeomorphism \cite{BaikRafiqiWu2016}. However, their hypotheses are quite restrictive.


\bibliographystyle{alpha}
\bibliography{references}

\newcommand{\etalchar}[1]{$^{#1}$}
\begin{thebibliography}{DDH{\etalchar{+}}24}

\bibitem[Bow13]{bowman2013complete}
Joshua~P Bowman.
\newblock The complete family of {A}rnoux--{Y}occoz surfaces.
\newblock {\em Geometriae Dedicata}, 164(1):113--130, 2013.

\bibitem[BRW16]{BaikRafiqiWu2016}
Hyungryul Baik, Ahmad Rafiqi, and Chenxi Wu.
\newblock Constructing pseudo-{A}nosov maps with given dilatations.
\newblock {\em Geom. Dedicata}, 180:39--48, 2016.

\bibitem[Buc25]{Buckminster}
Ellis Buckminster.
\newblock Periodic points of endperiodic maps.
\newblock {\em Groups, Geometry, and Dynamics}, 2025.

\bibitem[CCF21]{CC-book}
John Cantwell, Lawrence Conlon, and Sergio~R. Fenley.
\newblock Endperiodic automorphisms of surfaces and foliations.
\newblock {\em Ergodic Theory Dynam. Systems}, 41(1):66--212, 2021.

\bibitem[CH08]{ChoHam}
Jin-Hwan Cho and Ji-Young Ham.
\newblock The minimal dilatation of a genus-two surface.
\newblock {\em Experimental Mathematics}, 17(3):257 -- 267, 2008.

\bibitem[Cha04]{Chamanara}
R.~Chamanara.
\newblock Affine automorphism groups of surfaces of infinite type.
\newblock In {\em In the tradition of {A}hlfors and {B}ers, {III}}, volume 355
  of {\em Contemp. Math.}, pages 123--145. Amer. Math. Soc., Providence, RI,
  2004.

\bibitem[DDH{\etalchar{+}}24]{DDHKOP}
Ryan Dickmann, George Domat, Thomas Hill, Sanghoon Kwak, Carlos Ospina, Priyam
  Patel, and Rebecca Rechkin.
\newblock Thurston's theorem: entropy in dimension one.
\newblock {\em Math. Res. Lett.}, 31(1):127--174, 2024.

\bibitem[FKLL25]{Endperiodic2}
Elizabeth Field, Autumn Kent, Christopher Leininger, and Marissa Loving.
\newblock A lower bound on volumes of end-periodic mapping tori.
\newblock {\em Journal of Topology}, 18(3):e70037, 2025.

\bibitem[Fri85]{Fried1985}
David Fried.
\newblock Growth rate of surface homeomorphisms and flow equivalence.
\newblock {\em Ergodic Theory and Dynamical Systems}, 5(4):539–563, 1985.

\bibitem[HS07]{HamSong}
Ji-Young Ham and Won~Taek Song.
\newblock The minimum dilatation of pseudo-{A}nosov 5-braids.
\newblock {\em Experimental Mathematics}, 16(2):167 -- 180, 2007.

\bibitem[Lin84]{Lind1984}
D.~A. Lind.
\newblock The entropies of topological markov shifts and a related class of
  algebraic integers.
\newblock {\em Ergodic Theory and Dynamical Systems}, 4(2):283–300, 1984.

\bibitem[LMT]{LandryMinskyTaylor2023}
Michael Landry, Yair Minsky, and Samuel~J. Taylor.
\newblock Endperiodic maps via pseudo-{A}nosov flows.
\newblock ar{X}iv:2304.10620.

\bibitem[LMT23]{LandryMinskyTaylor2022}
Michael Landry, Yair Minsky, and Samuel~J. Taylor.
\newblock Flows, growth rates, and the veering polynomial.
\newblock {\em Ergodic Theory and Dynamical Systems}, 43(9):3026–3107, 2023.

\bibitem[LT11]{Lanneau2011}
Erwan Lanneau and Jean-Luc Thiffeault.
\newblock On the minimum dilatation of pseudo-{A}nosov homeromorphisms on
  surfaces of small genus.
\newblock {\em Annales de l’institut Fourier}, 61(1):105--144, 2011.

\bibitem[LW25]{LovingWu2025}
Marissa Loving and Chenxi Wu.
\newblock A lower bound on end-periodic stretch factors.
\newblock {\em Proc. Amer. Math. Soc.}, 153(9):4071--4077, 2025.

\bibitem[SKL02]{SongKoLos}
Won~Taek Song, Ki~Hyoung Ko, and J\'{e}r\^{o}me~E. Los.
\newblock Entropies of braids.
\newblock {\em Journal of Knot Theory and Its Ramifications}, 11(04):647--666,
  2002.

\bibitem[Str17]{Strenner2017}
Bal\'azs Strenner.
\newblock Algebraic degrees of pseudo-{A}nosov stretch factors.
\newblock {\em Geometric and Functional Analysis}, 27:1497 -- 1539, 2017.

\bibitem[Thu88]{Thurston88}
William~P. Thurston.
\newblock On the geometry and dynamics of diffeomorphisms of surfaces.
\newblock {\em Bulletin (New Series) of the American Mathematical Society},
  19(2):417 -- 431, 1988.

\bibitem[Thu14]{Thurston2014}
William Thurston.
\newblock Entropy in dimension one, 2014.

\bibitem[TZ25]{TsangZeng}
Chi~Cheuk Tsang and Xiangzhuo Zeng.
\newblock Minimum dilatations of pseudo-{A}nosov braids, 2025.

\end{thebibliography}

\end{document}